\newtheorem{thm}{Theorem}[section]
\newtheorem{pro}[thm]{Proposition}
\newtheorem{lem}[thm]{Lemma}
\newtheorem{cor}[thm]{Corollary}
\newtheorem{defn}[thm]{Definition}
\newtheorem*{rem*}{Remarks}
\newtheorem{rems}[thm]{Remark}
\newtheorem*{conj*}{Conjecture}
\DeclareMathOperator{\A}{\mathbb{A}}
\DeclareMathOperator{\C}{\mathbb{C}}
\DeclareMathOperator{\Q}{\mathbb{Q}}
\DeclareMathOperator{\R}{\mathbb{R}}
\DeclareMathOperator{\Z}{\mathbb{Z}}
\DeclareMathOperator{\Hom}{Hom}
\DeclareMathOperator{\Res}{Res}
\DeclareMathOperator{\ad}{ad}
\DeclareMathOperator{\im}{Im}
\DeclareMathOperator{\ord}{ord}
\DeclareMathOperator{\re}{Re}
\newcommand{\lra}{\rightarrow}
\newcommand{\mrm}{\mathrm}
\newcommand{\mbb}{\mathbb}
\title{Algebraic cycles and residues of degree eight $L$-functions of $\mrm{GSp}(4) \times \mrm{GL}(2)$}
\author{Francesco Lemma}
\address{Univ Paris Diderot, Institut math\'ematique de Jussieu-Paris Rive Gauche, UMR 7586, B\^atiment Sophie Germain, Case 7012, 75205 Paris Cedex 13}
\email{francesco.lemma@imj-prg.fr}
\begin{document}

\begin{abstract}  We prove a cohomological formula for non-critical residues of degree eight automorphic $L$-functions of $\mrm{GSp}(4) \times \mrm{GL}(2)$ in the spirit of Beilinson conjecture. We rely on the cohomological interpretation of an automorphic period integral and on the study of Novodvorsky's integral representation of the $L$-functions. 
\,\\
\end{abstract}

\maketitle

\tableofcontents

\section{Introduction}

Let $X$ be a smooth projective variety over $\Q$ and let $n \geq 1$ be an integer. The Hasse-Weil $L$-function $L(s, H^{2n-2}(X))$ associated to the cohomology of $X$ in degree $2n-2$ is defined by an Euler product which is absolutely convergent for $\re(s)>n$ and which is expected to have a meromorphic continuation to the whole complex plane, the only possible pole occuring at $n$. Let $N^{n-1}(X)$ be the $\Q$-vector space of cycles of codimension $n-1$ on $X$ modulo homological equivalence.

\begin{conj*} (Tate) 
$$
-\ord_{s=n}L(s, H^{2n-2}(X))=\dim_{\Q} N^{n-1}(X).
$$
\end{conj*}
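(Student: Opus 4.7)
The statement is Tate's conjecture in full generality, a central open problem; I can only sketch the shape a proof would take and indicate the principal obstacle. The assertion splits into two inequalities. The accessible direction $-\ord_{s=n}L(s, H^{2n-2}(X)) \geq \dim_{\Q} N^{n-1}(X)$ is essentially formal: the $\ell$-adic cycle class map sends each class in $N^{n-1}(X)$ to a $\Gal(\oline{\Q}/\Q)$-invariant class in $H^{2n-2}(X_{\oline{\Q}}, \Q_\ell)(n-1)$, and each linearly independent such invariant class forces a simple pole of the appropriate Euler factors and hence -- granted the expected analytic continuation -- of the global $L$-function at $s=n$. Injectivity of the cycle class map in this direction is built into the definition of $N^{n-1}(X)$.

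The reverse inequality is where the genuine content lies: one must produce algebraic cycles from the analytic datum of a pole. The plan would be (i) to reinterpret the order of pole as the multiplicity of the trivial representation in $H^{2n-2}(X_{\oline{\Q}}, \Q_\ell)(n-1)$ via Euler factor compatibility, and (ii) to realise every such Galois-invariant class as the cycle class of an algebraic cycle of codimension $n-1$. Step (ii) is the geometric heart of the conjecture and is presently known only in scattered cases: Faltings' theorem for divisors on abelian varieties and the Tate conjecture for K3 surfaces via the Kuga--Satake construction. \emph{This is the main obstacle:} no systematic mechanism exists for producing algebraic cycles from Galois-invariant cohomology classes on a general smooth projective variety.

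In the setting of this paper, where one expects $X$ to be a Shimura variety attached to $\mrm{GSp}(4) \times \mrm{GL}(2)$, the natural approach would be to construct candidate cycles from embeddings of smaller Shimura subvarieties and to detect their cohomology classes by pairing them against automorphic forms via Novodvorsky's integral representation. A cohomological formula for the non-critical residue -- which is what the paper ultimately provides -- realises the residue of the $L$-function as a regulator-type pairing against an explicit algebraic-cycle class, yielding one linearly independent cycle in $N^{n-1}(X)$ whenever the residue is non-zero. This is strong evidence for Tate's conjecture in this setting but falls short of a proof, since one would need a systematic supply of such cycles exhausting $N^{n-1}(X)$, which is exactly the step that lies beyond current methods.
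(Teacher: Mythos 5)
This statement is Tate's conjecture, which the paper records as a \emph{conjecture} with no proof given; it serves only as motivation for the results that follow. You correctly recognize this, and your survey of the two inequalities, the formal direction via the $\ell$-adic cycle class map, and the open geometric direction is accurate. Your final paragraph also correctly identifies the relation to the paper's actual contribution: Theorem \ref{main2} and Corollary \ref{main1} produce, via Novodvorsky's integral representation and the embedding $\iota$ of two modular curves, a single non-vanishing cycle class whenever $L(s,\Pi\times\sigma)$ has a pole at $s=1$, which is evidence toward (one direction of) Tate in this setting rather than a proof of the conjecture. Nothing further to compare, since there is no proof in the paper to match against.
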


To give the geometric interpretation of the first non-zero term $L^*(n, H^{2n-2}(X))$ in the Taylor expansion of $L(s, H^{2n-2}(X))$ at $s=n$, let $H^{2n-1}_{\mathcal{M}}(X, \Q(n))_{\Z}$ and $H^{2n-1}_{\mathcal{D}}(X/\R, \R(n))$ denote the integral part of the motivic cohomology group $H^{2n-1}_{\mathcal{M}}(X, \Q(n))$ and the real Deligne-Beilinson cohomology respectively and let
$$
\begin{CD}
r_{\mathcal{D}}: H^{2n-1}_{\mathcal{M}}(X, \Q(n))_{\Z} \oplus N^{n-1}(X) @>>> H^{2n-1}_{\mathcal{D}}(X/\R, \R(n))
\end{CD}
$$
denote the thickened regulator. It is known that $H^{2n-1}_{\mathcal{D}}(X/\R, \R(n))$ is a finite dimensional $\R$-vector space and that there is a canonical isomorphism
$$
\mrm{Ext}^1_{\mrm{MHS}_{\R}^+}(\R(0), H_B^{2n-2}(X, \R(n))) \simeq H^{2n-1}_{\mathcal{D}}(X/\R, \R(n)).
$$
where $\mrm{MHS}_{\R}^+$ denotes the abelian category of  real mixed $\R$-Hodge structures, whose definition is recalled in the body of the article (Def. \ref{realmixed}). Let $\mathcal{D}(n, H^{2n-2}(X))$ denote the Deligne $\Q$-structure on the highest exterior power of $H^{2n-1}_{\mathcal{D}}(X/\R, \R(n))$.

\begin{conj*}(Beilinson)\\

i. The map $r_{\mathcal{D}}$ induces an isomorphism
$$
\begin{CD}
(H^{2n-1}_{\mathcal{M}}(X, \Q(n))_{\Z} \oplus N^{n-1}(X)) \otimes_{\Q} \R @>\sim>> H^{2n-1}_{\mathcal{D}}(X/\R, \R(n)),
\end{CD}
$$
\,\\
\indent ii. $\ord_{s=n-1}L(s, H^{2n-2}(X))=\dim_{\Q} H^{2n-1}_{\mathcal{M}}(X, \Q(n))_{\Z},$\\

iii. $\det (\im r_{\mathcal{D}})=L^*(n, H^{2n-2}(X))\mathcal{D}(n, H^{2n-2}(X)).$
\end{conj*}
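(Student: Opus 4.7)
The stated conjecture is one of the deepest open problems in arithmetic geometry, so the most honest proposal is to describe the strategy that succeeds in known special cases and which the present paper adapts to the automorphic setting of $\mrm{GSp}(4) \times \mrm{GL}(2)$.

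The plan decomposes along the three parts of the conjecture. For (i), the surjectivity of $r_{\mathcal{D}} \otimes \R$, one must produce enough classes in motivic cohomology. When $X$ is a smooth compactification of a Shimura variety, the natural source is a motivic Eisenstein construction: push forward along the boundary of a toroidal compactification classes built from $K$-theory of modular curves, or form Beilinson-Flach type elements on products of Shimura data, so as to obtain classes in $H^{2n-1}_{\mc{M}}(X, \Q(n))_{\Z}$ with prescribed residues. Together with $N^{n-1}(X)$, these should span the real Deligne-Beilinson cohomology after regulator. For (ii) and (iii) one must compute the image under $r_{\mathcal{D}}$ and identify it with the leading Taylor coefficient $L^*(n, H^{2n-2}(X))$; the paper handles this by interpreting the regulator as an archimedean period integral against a cohomological automorphic form and then unfolding via Novodvorsky's integral representation of the degree eight $L$-function of $\mrm{GSp}(4) \times \mrm{GL}(2)$.

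Concretely, the programme I would follow is: first, fix the Shimura datum for $\mrm{GSp}(4) \times \mrm{GL}(2)$, namely the product of the Siegel threefold and a modular curve, together with a cuspidal cohomological automorphic representation $\Pi \otimes \sigma$ whose degree eight $L$-function has the expected non-critical residue at $s = n$; second, produce a motivic class whose Betti realisation pairs non-trivially with the $(\Pi \otimes \sigma)$-isotypic component of $H^{2n-2}(X, \Q(n-1))$, for instance via an Eisenstein class attached to a boundary stratum or a higher cycle supported on a diagonal sub-Shimura variety; third, use the description of $H^{2n-1}_{\mc{D}}(X/\R, \R(n))$ as an $\mrm{Ext}^{1}$ in $\mrm{MHS}_{\R}^{+}$ to translate the regulator into an explicit $C^{\infty}$-period integral; and finally, compare this integral to the residue of the $L$-function via Novodvorsky's global zeta integral, together with a local archimedean computation showing that the relevant archimedean factor is non-zero.

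The main obstacle will be part (i): proving that the constructed motivic classes, combined with algebraic cycles, actually exhaust $H^{2n-1}_{\mc{D}}(X/\R, \R(n))$ after tensoring with $\R$. This requires both a non-vanishing result for archimedean zeta integrals and, more seriously, a spanning statement for motivic Eisenstein cohomology that lies beyond current techniques in any generality. The matching of the regulator formula with the $L$-value, by contrast, is essentially algorithmic once the integral representation has been placed in a cohomological framework, and this is the part the present article is designed to carry out, yielding a cohomological formula in the spirit of (iii) while leaving the full conjecture contingent on independent control of $\dim_{\Q} H^{2n-1}_{\mc{M}}(X, \Q(n))_{\Z}$ and $\dim_{\Q} N^{n-1}(X)$.
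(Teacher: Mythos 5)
This is a statement of Beilinson's conjecture, which the paper records as an open conjecture and does not prove; there is no proof in the paper to compare against. You correctly recognized this at the outset, so the fair thing to assess is whether your outline of the ``strategy that the present paper adapts'' accurately describes what the paper actually does.

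It is mostly accurate but misidentifies the source of classes. You propose Eisenstein/Beilinson--Flach-type motivic elements pushed forward from boundary strata or $K$-theory of modular curves. The paper does something different and simpler: the relevant case is $n=3$, $X = S \times Y$ (Siegel threefold times modular curve), where $L(0, \Pi \times \sigma) \neq 0$ by Asgari--Shahidi and Shahidi, so by part (ii) of the conjecture the integral motivic cohomology $H^{2n-1}_{\mathcal{M}}(X,\Q(n))_{\Z}$ on the $\Pi_f\times\sigma_f$-isotypic component is expected to \emph{vanish}. Hence no motivic cohomology classes are needed at all, and the entire real Deligne--Beilinson cohomology (rank one on that isotypic piece, by Jiang--Soudry multiplicity one) is expected to be spanned by the single algebraic cycle class coming from the closed embedding $\iota \colon \mathrm{GL}(2)\times_{\mathbb{G}_m}\mathrm{GL}(2) \hookrightarrow \mathrm{GSp}(4)\times\mathrm{GL}(2)$, i.e.\ by $N^{n-1}(X)$ alone. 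Your steps three and four (regulator as a period integral in $\mathrm{Ext}^1$ over $\mathrm{MHS}_{\R}^+$, then unfolding via Novodvorsky's zeta integral, plus an archimedean non-vanishing computation) do match the paper's section \ref{pduality}--\ref{pfs}. But step two as you phrased it, and the claimed ``main obstacle'' of exhausting Deligne cohomology by motivic Eisenstein classes, do not reflect the paper's route, where the residue hypothesis $L$-pole at $s=1$ is precisely what forces a rank-one situation resolvable by one cycle, and the remaining unproved ingredient Beilinson expects is the algebraicity of the de Rham--Whittaker period $p(\Pi\times\sigma)$, not a spanning statement for Eisenstein cohomology.
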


For an introduction to this circle of ideas and for more details on the conjectures, the reader is referred to \cite{nekovar}, \cite{den-scholl} and \cite{schneider}.\\

To state our two main results, which are motivated by the conjectures above, let us denote by $\A$ the ring of adeles of $\Q$ and let us consider irreducible cuspidal automorphic representations $\Pi=\bigotimes'_v \Pi_v$ and $\sigma=\bigotimes'_v \sigma_v$ of $\mrm{GSp}(4, \A)$ and $\mrm{GL}(2, \A)$ respectively. Let $V$ be the finite set of places where $\Pi$ or $\sigma$ is ramified, together with the infinite place. We are interested in the $L$-function 
$$
L_V(s, \Pi \times \sigma)=\prod_{v \notin V} L(s, \Pi_v \times \sigma_v)
$$ 
which is associated to the tensor product eight-dimensional representation of the Langlands dual of $\mrm{GSp}(4) \times \mrm {GL}(2)$, which is $\mrm{GSp}(4, \C) \times \mrm {GL}(2, \C)$. This partial Euler product converges for $\re(s)$ big enough. To study special values of $L_V(s, \Pi \times \sigma)$ by motivic methods, we assume that the non-archimedean components $\Pi_f$ and $\sigma_f$ contribute to the cohomology of a Siegel threefold $S$ and of a modular curve $Y$ respectively, with trivial coefficient system. To use Novodvorsky's integral representation \cite{moriyama1}, \cite{soudry} we need to assume that $\Pi$ is globally generic. Examples of such automorphic representations are obtained as Weil liftings from $\mrm{GSO}(2, 2)$ (see \cite{harris-kudla}). Under the assumption that $\Pi$ is globally generic, the main result of \cite{moriyama1} applies, hence the partial Euler product $L_V(s, \Pi \times \sigma)$ can be completed to a product over all places which has a meromorphic continuation to the whole complex plane and a functional equation relating $s$ and $1-s$. Let $L(s, \Pi \times \sigma)$ denote the product completed to all non-archimedean places as defined in \cite{moriyama1}, \cite{soudry}. Because $\Pi_f$ and $\sigma_f$ are cohomological, they are defined over a number field. Let $E$ denote a fixed number over which both $\Pi_f$ and $\sigma_f$ are defined. For any $r \in \Z$, let $H^4_{B, !}(S \times Y, \Q(r))$ denote the image of compactly supported Betti cohomology in the cohomology without support, at infinite level, with coefficients in $\Q(r)$. Let
\begin{eqnarray*}
M_B(\Pi_f \times \sigma_f, r) &=& \mrm{Hom}_{E[G(\A_f)]}(\Pi_f \times \sigma_f, H^4_{B, !}(S \times Y, \Q(r)) \otimes_{\Q} E)
\end{eqnarray*}
where we denote by $G$ the reductive group $\mrm{GSp}(4) \times \mrm {GL}(2)$. This is a pure $\Q$-Hodge structure of weight $4-2r$, with coefficients in $E$. Let us denote by $(M_B(\Pi_f \times \sigma_f, 2)_{\R} \cap M^{0, 0})^+$ the vectors of $M_B(\Pi_f \times \sigma_f, 2)_{\R}$ which have Hodge type $(0, 0)$ and which are fixed by complex conjugation. We have a canonical isomorphism of $\R \otimes_{\Q} E$-modules
$$
\mrm{Ext}^1_{\mrm{MHS}_{\R}^+}(\R(0), M_B(\Pi_f \times \sigma_f, 3)_{\R}) \simeq (M_B(\Pi_f \times \sigma_f, 2)_{\R} \cap M^{0, 0})^+
$$ 
and we will deduce from the main result of \cite{jiang-soudry} that these $\R \otimes_{\Q} E$-modules have rank one. Let $\mathcal{D}(\Pi_f \times \sigma_f)$ denote the Deligne $E$-structure on  $
\mrm{Ext}^1_{\mrm{MHS}_{\R}^+}(\R(0), M_B(\Pi_f \times \sigma_f, 3)_{\R})$. Let us consider the group
$
\mathrm{GL}(2) \times_{\mathbb{G}_m} \mathrm{GL}(2)=\{(g, h) \in \mrm{GL}(2) \times \mrm{GL}(2) \,|\, \det(g)=\det(h) \}
$
and let 
$$
\iota: \mathrm{GL}(2) \times_{\mathbb{G}_m} \mathrm{GL}(2) \rightarrow \mrm{GSp}(4) \times \mrm{GL}(2)
$$
be the embedding defined by
$$
\iota\left( \begin{pmatrix}
a & b\\
c & d\\
\end{pmatrix}, \begin{pmatrix}
a' & b'\\
c' & d'\\
\end{pmatrix} \right) =
\left( \begin{pmatrix}
a & & b & \\
 & a' &  & b'\\
c &  & d & \\
 & c' &  & d'\\
\end{pmatrix},  \begin{pmatrix}
a' & b'\\
c' & d'\\
\end{pmatrix} \right).
$$
As explained in section \ref{section-cycle} below, the morphism $\iota$ induces an embedding of the product of two modular curves into the Shimura variety $S \times Y$ whose cohomology class generates an $E$-subspace 
$$
\mathcal{Z}(\Pi_f \times \sigma_f) \subset (M_B(\Pi_f \times \sigma_f, 2)_{\R} \cap M^{0, 0})^+.
$$
Let $p(\Pi)$, resp. $p(\sigma)$, be the de Rham-Whittaker periods attached to $\Pi$, resp. $\sigma$, defined and suitably normalized in section \ref{dR-W} and let $p(\Pi \times \sigma)$ denote the product $p(\Pi)p(\sigma)$.

\begin{thm} \label{main2}  Assume that $\Pi$ and $\sigma$ have trivial central characters and that $L(s, \Pi \times \sigma)$ has a pole at $s=1$. Then
$$
\mathcal{Z}(\Pi_f \times \sigma_f)=p(\Pi \times \sigma)\Res_{s=1}L(s, \Pi \times \sigma) \mathcal{D}(\Pi_f \times \sigma_f).
$$
\end{thm}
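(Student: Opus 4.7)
The plan is to unfold each side of the claimed identity into a comparable analytic form and match them via Novodvorsky's integral representation.

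First, I would make the right-hand side explicit. Using the canonical isomorphism
$$
\mrm{Ext}^1_{\mrm{MHS}_{\R}^+}(\R(0), M_B(\Pi_f \times \sigma_f, 3)_{\R}) \simeq (M_B(\Pi_f \times \sigma_f, 2)_{\R} \cap M^{0, 0})^+,
$$
a generator of $\mathcal{D}(\Pi_f \times \sigma_f)$ corresponds, via the Betti-de Rham comparison and projection onto the $(0,0)$-component, to an $E$-rational de Rham class that can be expressed in terms of a Whittaker new vector in $\Pi \otimes \sigma$. The de Rham-Whittaker periods $p(\Pi)$ and $p(\sigma)$ introduced in section \ref{dR-W} are designed precisely to measure the discrepancy between the rational de Rham normalization and the Whittaker normalization; after scaling by $p(\Pi \times \sigma)$, the generator of $\mathcal{D}(\Pi_f \times \sigma_f)$ is realized by an explicit Betti class $\omega_B$ associated to a new vector.

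Next, I would realize the cycle class $\mathcal{Z}(\Pi_f \times \sigma_f)$ as the $\Pi_f \times \sigma_f$-isotypic projection of $\iota_\ast [X]$, where $X$ denotes the product of modular curves sitting in $S \times Y$ via $\iota$. Pairing $\iota_\ast[X]$ against $\omega_B$ via Poincar\'e duality on $S \times Y$ reduces, by the projection formula and the fact that $X$ is itself a Shimura subvariety, to a period integral
$$
\int_{H(\Q)Z(\A) \bs H(\A)} \varphi_{\Pi}(\iota(h)_1) \, \varphi_{\sigma}(\iota(h)_2) \, dh,
$$
where $H = \GL(2) \times_{\Gm} \GL(2)$ and $(\varphi_{\Pi}, \varphi_{\sigma})$ is the cuspidal automorphic realization of the new vector underlying $\omega_B$. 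The hypothesis that the central characters of $\Pi$ and $\sigma$ are trivial ensures that the quotient by $Z(\A)$ is well defined.

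The third step is to apply Novodvorsky's unfolding \cite{moriyama1}, \cite{soudry}: expanding the cusp forms in their Fourier-Whittaker series along the relevant unipotent subgroup, the period above equals a product of local Whittaker integrals against the Euler product for $L(s, \Pi \times \sigma)$, a priori at $s=1$. The local integrals at unramified places are computed by the method of Jiang-Soudry \cite{jiang-soudry}, while those at finite ramified places are, by design, absorbed into the normalization of $p(\Pi \times \sigma)$. Since $L(s,\Pi \times \sigma)$ has a pole at $s=1$ by hypothesis, taking residues replaces the Eulerian value by $\Res_{s=1}L(s,\Pi \times \sigma)$ and yields the claimed identity up to $E^\times$.

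The main obstacle will be the archimedean matching: relating the Hodge-theoretic de Rham-Whittaker period $p(\Pi)$, defined via a rational de Rham class on the Siegel threefold, to the archimedean factor of Novodvorsky's unfolded integral requires an explicit computation in the $(\mathfrak{g}, K)$-cohomology of $\Pi_\infty$ and a careful identification of the archimedean Whittaker function of the minimal $K$-type produced by the embedding $\iota$ at infinity. This hinges on Moriyama's explicit archimedean Whittaker calculation and on a precise identification of the restriction of Betti cycle classes with harmonic forms at the relevant $K$-type, where the rank-one statement deduced from \cite{jiang-soudry} guarantees that no vanishing ambiguity occurs.
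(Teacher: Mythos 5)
Your overall strategy matches the paper's: pair the cycle class against a suitable harmonic form via Poincar\'e duality, identify the result with a period integral over $Z(\A)H(\Q)\backslash H(\A)$, unfold via Novodvorsky's integral representation, and take residues at $s=1$. But one key mechanism in your outline is wrong and would block the proof as written. You propose to "absorb" the ramified local integrals into the normalization of $p(\Pi \times \sigma)$. That is not how the de Rham-Whittaker periods work: $p(\Pi)$ and $p(\sigma)$ are defined purely by comparing the coherent de Rham $E$-structure against the Whittaker $E$-structure (Prop. \ref{dR-W-periods}); they contain no local $L$-function data. The ramified local zeta integrals are instead controlled by a separate rationality argument (Prop. \ref{ram-int}): choosing Whittaker and Schwartz data fixed by $\mrm{Aut}(\C/E)$, one shows $\mathcal{Z}_v(s, W_{\Psi_v}, W_{\Phi_v}, f_{\varphi_v})/L(s, \Pi_v \times \sigma_v)$ lies in $E$ at $s=1$ and can be made nonzero, using the Laurent-expansion analysis from Soudry's local theory. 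Without such an argument you are left with an unaccounted-for ramified constant, so the identity only holds up to that factor, not up to $E^\times$.

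Two further points. The reference \cite{jiang-soudry} is not used to compute unramified local integrals (those are Novodvorsky/Soudry/Moriyama); it enters only via the multiplicity one statement $m(\Pi_\infty^W \otimes \Pi_f)=1$ (Prop. \ref{js}), which makes $\mrm{Ext}^1_{\mrm{MHS}_\R^+}(\R(0), M_B(\Pi_f\times\sigma_f,3)_\R)$ a rank-one $\R\otimes_\Q E$-module so the comparison with $\mathcal{D}(\Pi_f\times\sigma_f)$ is meaningful. You also gloss over the determinant bookkeeping (Lem. \ref{pairing1}, Prop. \ref{delta}, Cor. \ref{un-corollaire}): the passage from the Beilinson to the Deligne $E$-structure contributes powers of $2\pi i$ via $\delta(\Pi_f\times\sigma_f,3) \sim (2\pi i)^4$ and $\langle v_\mathcal{B}, \overline{\omega}\otimes\eta\rangle_{B,\C} \sim (2\pi i)^{-6}$, and these must cancel against the archimedean $\Gamma_\C$-factors from Moriyama's Whittaker computation (Prop. \ref{archi-comp}). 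Without tracking these explicitly the powers of $2\pi i$ do not close up, and the statement as formulated (with $p(\Pi\times\sigma)$ expected to lie in $E^\times$ per Beilinson) would be off by a transcendental factor.
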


Three remarks are in order. The first is that the poles of $L(s, \Pi \times \sigma)$ are at most simple (\cite{moriyama1} Thm. 1.1). As a consequence, the residue in Thm. \ref{main2} is nothing but the special value at $1$. The second is that, according to \cite{asgari-shahidi} and \cite{shahidi} Thm. 5.2, we have $L(0, \Pi \times \sigma) \neq 0$. As a consequence it follows from the second point of Beilinson's conjecture that the integral motivic cohomology space corresponding to $\Pi \times \sigma$ is zero. This explains why we only need one cycle class in the above theorem. The third remark is that according to Beilinson's conjecture for $\Res_{s=1}L(s, \Pi \times \sigma)$, we should have $p(\Pi \times \sigma) \in E^\times$. We hope to address this problem in a future work. Let us also point out the following straightforward corollary of the theorem, in the spirit of the conjecture of Tate.

\begin{cor} \label{main1} Assume that $\Pi$ and $\sigma$ have trivial central characters and that $L(s, \Pi \times \sigma)$ has a pole at $s=1$. Then
$
\mathcal{Z}(\Pi_f \times \sigma_f) \neq 0$.
\end{cor}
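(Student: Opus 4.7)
The plan is to deduce this directly from Theorem \ref{main2}, by verifying that each factor on the right-hand side of the identity
$$
\mathcal{Z}(\Pi_f \times \sigma_f)=p(\Pi \times \sigma)\Res_{s=1}L(s, \Pi \times \sigma) \mathcal{D}(\Pi_f \times \sigma_f)
$$
is nonzero; then the right-hand side is a nonzero $E$-line inside the one-dimensional $\R \otimes_{\Q} E$-module $(M_B(\Pi_f \times \sigma_f, 2)_{\R} \cap M^{0, 0})^+$, and hence the left-hand side is nonzero.

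First, the residue $\Res_{s=1}L(s, \Pi \times \sigma)$ is nonzero by hypothesis, since we are assuming $L(s, \Pi \times \sigma)$ has a pole at $s=1$ (which is simple by \cite{moriyama1}). Second, the Deligne $E$-structure $\mathcal{D}(\Pi_f \times \sigma_f)$ is, by definition, a nonzero $E$-line in the highest exterior power of the relevant real Deligne--Beilinson cohomology, which has already been shown (via the consequence of \cite{jiang-soudry} recalled in the introduction) to have rank one over $\R \otimes_{\Q} E$; in particular $\mathcal{D}(\Pi_f \times \sigma_f) \neq 0$. Third, the de Rham--Whittaker periods $p(\Pi)$ and $p(\sigma)$ are nonzero scalars, since they are defined in section \ref{dR-W} as comparison constants between two nonzero rational structures on one-dimensional spaces, hence necessarily belong to $(\C \otimes_{\Q} E)^{\times}$.

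Combining these three facts, the product $p(\Pi \times \sigma)\Res_{s=1}L(s, \Pi \times \sigma)$ is a nonzero element of $\C \otimes_{\Q} E$, so multiplying $\mathcal{D}(\Pi_f \times \sigma_f)$ by it yields a nonzero $E$-line, and we conclude $\mathcal{Z}(\Pi_f \times \sigma_f) \neq 0$. There is no real obstacle in this deduction: all the work is packaged into Theorem \ref{main2} itself, together with the rank-one statement deduced from \cite{jiang-soudry} and the nonvanishing of the Whittaker periods which is built into their definition.
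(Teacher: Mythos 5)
Your proof is correct and is exactly the deduction the paper intends: the paper explicitly describes Corollary \ref{main1} as a ``straightforward corollary of the theorem'' and leaves the details to the reader, and the three nonvanishing observations you spell out (nonzero residue since the pole is simple, $\mathcal{D}(\Pi_f \times \sigma_f)$ a nonzero $E$-line in a rank-one $\R \otimes_{\Q} E$-module, and $p(\Pi), p(\sigma) \in \C^\times$ by Prop.~\ref{dR-W-periods}) are precisely what is needed.
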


Let us briefly outline the contents of the paper. In section \ref{cohshvar}, we review what is needed about the de Rham and Betti cohomologies of the Shimura varieties $S$ and $Y$. In section \ref{sect23}, we recall the definition of the Deligne rational structure $\mathcal{D}(\Pi_f \times \sigma_f)$ and give the precise definition of $\mathcal{Z}(\Pi_f \times \sigma_f)$. In section \ref{pduality}, we explain the computation of some Poincar\'e duality pairings that turn out to be crucial in the proofs of our main theorems. In section \ref{zetaint} we state and prove results about the integral representation of the $L(s, \Pi \times \sigma)$ and in section \ref{pfs} we prove the two theorems stated above.\\

\textbf{Aknowledgments.}  I would like to aknowledge a funding of the ANR-13-BS01-0012
FERPLAY (Formule des traces relative, p\'eriodes, fonctions $L$ et analyse harmonique). It is a pleasure to thank Eric Urban for his invitation to Columbia University where I started to think about this work and David Loeffler, Nadir Matringe, Tomonori Moriyama, J\"org Wildeshaus and Sarah Zerbes for stimulating conversations and correspondence. Finally, I would like to thank the anonymous referees for their careful reading, for pointing out several inaccuracies and suggesting improvements.

\section{Motives for $\mrm{GSp}(4) \times \mrm{GL}(2)$}

\subsection{Cohomology of Shimura varieties} \label{cohshvar}Let us briefly recall the definition of the Shimura varieties we are interested in. Details can be found in \cite{laumon} Partie I.3. Let $I_2$ be the identity matrix of size two and let $\mathcal{J}$ be the symplectic form on $\Z^4$ whose  matrix in the canonical basis is
$$
\mathcal{J}=
\begin{pmatrix}
 & I_2\\
-I_2 & \\
\end{pmatrix}.
$$
The symplectic group $\mathrm{GSp}(4)$ is defined as
$$
\mrm{GSp}(4)=\left\{g \in \mathrm{GL}(4) \,|\, g^t \mathcal{J} g = \nu(g) \mathcal{J}, \, \nu(g) \in \mathbb{G}_m\right\}.
$$
Then the map $\nu: \mrm{GSp}(4) \rightarrow \mathbb{G}_m$ is a character. Let $\mathbb{S}=Res_{\mathbb{C}/\mathbb{R}} \mathbb{G}_{m, \mathbb{C}}$ be the Deligne torus and let $\mathcal{H}$ be the $\mrm{GSp}(4, \mathbb{R})$-conjugacy class of the morphism $h: \mathbb{S} \rightarrow \mrm{GSp}(4)_\mathbb{R}$ given on $\mathbb{R}$-points by
$$
x+iy \longmapsto \begin{pmatrix}
x & & y & \\
 & x &  & y\\
-y &  & x & \\
 &  -y &  & x\\
\end{pmatrix}.
$$
The pair $(\mrm{GSp}(4), \mathcal{H})$ is a pure Shimura datum in the sense of \cite{pink} 2.1 and $\mathcal{H} \simeq \mathcal{H}^+ \sqcup \mathcal{H}^-$ is isomorphic to the disjoint union of the Siegel upper and lower half space of genus $2$. It is easy to see that the reflex field of $(\mrm{GSp}(4), \mathcal{H})$ is the field of rational numbers. For any neat compact open subgroup $K$ of $\mrm{GSp}(4, \A_f)$, let us denote by $S^K$ the Shimura variety at level $K$ associated to $(\mrm{GSp}(4), \mathcal{H})$. It is a smooth quasi-projective variety over $\Q$ such that, as complex analytic varieties, we have
$$
S^K(\C) = \mrm{GSp}(4, \Q) \backslash (\mathcal{H} \times \mrm{GSp}(4, \A_f)/K).
$$
If $K=K(N) \subset \mrm{GSp}(4, \hat{\Z})$ is the principal congruence subgroup of level $N \geq 3$, then
$$
S^{K(N)}(\C)\simeq \bigsqcup_{(\Z/N\Z)^\times} \Gamma(N) \backslash \mathcal{H}^+
$$
where $\Gamma(N) \subset \mrm{Sp}(4, \Z)$ is the principal congruence subgroup of level $N$. In particular, the complex analytic varieties $S^K(\C)$ have dimension $3$. For $g \in \mrm{GSp}(4, \mathbb{A}_f)$ and $K$, $K'$ two neat compact open subgroups of $\mrm{GSp}(4,\mathbb{A}_f)$ such that $g^{-1}K'g \subset K$, right multiplication by $g$ on $S^K(\C)$ descends to a morphism $[g]: S^{K'} \rightarrow S^K$ of $\mathbb{Q}$-schemes, which is finite and \'etale. This implies that there is an action of $\mrm{GSp}(4, \mathbb{A}_f)$ on the projective system $(S^K)$ indexed by neat compact open subgroups of $\mrm{GSp}(4, \mathbb{A}_f)$. In what follows, all compact open subgroups of $\mrm{GSp}(4, \mathbb{A}_f)$ and of $\mrm{GL}(2, \A_f)$ will be assumed to be neat and we will not mention this fact anymore. Similarly, we have the projective system of modular curves $(Y^L)$ indexed by the set of compact open subgroups $L \subset \mrm{GL}(2, \A_f)$ and which is endowed of an action of $\mrm{GL}(2, \A_f)$. Let $G$ denote the product $G=\mrm{GSp}(4) \times \mrm{GL}(2)$. This discussion shows that we have an action of $G(\A_f)$ on the projective system $(S^K \times Y^L)$ indexed by the set of pairs $(K, L)$ where $K$ is a compact open subgroup of $\mrm{GSp}(4, \mathbb{A}_f)$ and $L$ is a compact open subgroup of $\mrm{GL}(2, \A_f)$.\\

Let $H^*_{dR, c}(S^K \times Y^L)$ and $H^*_{dR}(S^K \times Y^L)$ be the de Rham cohomology with compact support and without support respectively, with complex coefficients. Let
$$
H^*_{dR, !}(S^K \times Y^L)=\im(H^*_{dR, c}(S^K \times Y^L) \rightarrow H^*_{dR}(S^K \times Y^L))
$$ 
and let us define $H^*_{dR, !}(S^K)$ and $H^*_{dR, !}(Y^L)$ similarly. 

\begin{lem} \label{interior-coh} The graded $\C$-vector space $H^*_{dR, !}(Y^L)$ vanishes outside degree $1$.
\end{lem}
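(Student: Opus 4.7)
The plan is to use that $Y^L$ is a smooth quasi-projective complex variety of dimension $1$, and moreover non-compact (the modular curves $Y^L$ for neat level $L$ are always open curves obtained from their compactifications by removing the cusps). Once this is in hand, the interior cohomology is forced to be concentrated in degree one for cheap topological reasons.

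First I would note the trivial vanishing outside the range $[0,2]$: for $i \geq 3$ the de Rham cohomology $H^i_{dR}(Y^L)$ is already zero by dimension, so its interior part vanishes a fortiori. Next, in degree $0$ the map $H^0_{dR,c}(Y^L) \to H^0_{dR}(Y^L)$ is the zero map, because compactly supported locally constant functions on a (topologically) non-compact smooth manifold vanish identically; hence $H^0_{dR,!}(Y^L)=0$. Dually, in degree $2$, Poincaré duality between $H^{2}_{dR}(Y^L)$ and $H^0_{dR,c}(Y^L)$ for the non-compact curve $Y^L$ gives $H^2_{dR}(Y^L)=0$, so $H^2_{dR,!}(Y^L)=0$ as well.

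The only potentially nontrivial step is to justify that $Y^L$ is non-compact, which is the heart of the matter but is standard: the Shimura datum for $\mrm{GL}(2)$ has Baily--Borel compactification $X^L \supset Y^L$ obtained by adjoining finitely many cusps, and the cusps are a non-empty divisor since $\mrm{GL}(2,\mathbb{Q})$ has non-trivial unipotent radical acting on the upper half-plane by parabolic elements. Combining the four vanishings above, the graded space $H^*_{dR,!}(Y^L)$ is supported in degree $1$, as claimed. I expect no real obstacle; the statement is essentially a formal consequence of non-compactness and the dimension of $Y^L$.
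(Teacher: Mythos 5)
Your argument is correct and is essentially the same as the paper's: you observe that each connected component of the modular curve $Y^L$ is non-compact, deduce $H^0_{dR,c}(Y^L)=0$, conclude $H^2_{dR}(Y^L)=0$ by Poincar\'e duality, and note the trivial vanishing outside degrees $0,1,2$. The paper's proof is just a more compressed version of exactly this chain of observations.
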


\begin{proof} Note that $H^0_{dR, c}(Y^L)=0$ because the connected components of $Y^L$ are non-compact. As a consequence $H^2_{dR}(Y^L)=0$ by Poincar\'e duality. This implies the statement.
\end{proof}

Let 
$$
H^4_{dR, !}(S \times Y)=\underrightarrow{\lim}_K \underrightarrow{\lim}_L H^4_{dR, !}(S^K \times Y^L)
$$
where the limit is indexed by the compact open supbgroups $K$ of $\mrm{GSp}(4, \A_f)$ and $L$ of $\mrm{GL}(2, \A_f)$. The action of $G(\A_f)$ on the projective system $(S^K \times Y^L)$ induces a structure of $\C[G(\A_f)]$-module on $H^4_{dR, !}(S \times Y)$. Let us define $H^3_{dR, !}(S)$ and $H^1_{dR, !}(Y)$ similarly as $H^4_{dR, !}(S \times Y)$. By the K\"unneth formula and Lem. \ref{interior-coh}, we have a canonical isomorphism of $\C[G(\A_f)]$-modules
\begin{equation} \label{kunneth}
H^4_{dR, !}(S \times Y) \simeq H^3_{dR, !}(S) \otimes H^1_{dR, !}(Y).
\end{equation}
To relate $H^4_{dR, !}(S \times Y)$ to automorphic representations, let 
$
P_4=\{ \Pi_\infty^{3,0}, \Pi_\infty^{2,1}, {\Pi}_\infty^{1,2}, {\Pi}_\infty^{0,3} \}
$ 
denote the set of isomorphism classes of discrete series of $\mrm{GSp}(4, \R)_+=\nu^{-1}(\R^\times_+)$ with the same central and infinitesimal characters as the trivial representation. Here $\Pi_\infty^{3,0}$ is holomorphic, $\Pi_\infty^{2,1}$ and $\Pi_\infty^{1,2}$ are generic, which means that they have a Whittaker model in the sense of section \ref{section-zeta}, and are determined by their Hodge types as indicated by the formulae displayed on p.8 and finally $\Pi_\infty^{0,3}$ is antiholomorphic. The fact that $P_4$ has $4$ elements follows from Harish-Chandra's classification, as explained in \cite{lemma2} Prop. 3.1 in the particular case where $k=k'=0$, with the notation of loc. cit. In what follows, we will denote by $\Pi_\infty^H$ and $\Pi_\infty^W$ the discrete series of $\mrm{GSp}(4, \R)$ defined as  
\begin{eqnarray*}
\Pi_\infty^H=\mrm{Ind}_{\mrm{GSp}(4, \R)_+}^{\mrm{GSp}(4, \R)} \Pi_\infty^{3,0}=\mrm{Ind}_{\mrm{GSp}(4, \R)_+}^{\mrm{GSp}(4, \R)} \Pi_\infty^{0,3},\\
\Pi_\infty^W=\mrm{Ind}_{\mrm{GSp}(4, \R)_+}^{\mrm{GSp}(4, \R)} \Pi_\infty^{2,1}=\mrm{Ind}_{\mrm{GSp}(4, \R)_+}^{\mrm{GSp}(4, \R)} \Pi_\infty^{1,2}.
\end{eqnarray*}
In particular, we have
\begin{eqnarray*}
\Pi_\infty^H|_{\mrm{GSp}(4, \R)_+}= \Pi_\infty^{3,0} \oplus \Pi_\infty^{0,3},\\
\Pi_\infty^W|_{\mrm{GSp}(4, \R)_+}= \Pi_\infty^{2,1} \oplus\Pi_\infty^{1,2}.
\end{eqnarray*}
Similarly, let
$
P_2=\{\sigma_\infty^{1,0}, \sigma_\infty^{0,1}\}
$
denote the set of isomorphism classes of discrete series of $\mrm{GL}(2, \R)_+=\det^{-1}(\R^\times_+)$ with the same central and infinitesimal characters as the trivial representation, where $\sigma_\infty^{1,0}$ is holomorphic and $\sigma_\infty^{0,1}$ is antiholomorphic. Let $\sigma_\infty$ denote the induced representation 
$$
\sigma_\infty=\mrm{Ind}_{\mrm{GL}(2, \R)_+}^{\mrm{GL}(2, \R)} \sigma_\infty^{1,0}=\mrm{Ind}_{\mrm{GL}(2, \R)_+}^{\mrm{GL}(2, \R)} \sigma_\infty^{0,1}
$$
so that $\sigma_\infty|_{\mrm{GL}(2, \R)_+}=\sigma_\infty^{1,0} \oplus \sigma_\infty^{0,1}$. Let $\mathfrak{gsp}_4$ and $\mathfrak{gl}_2$ denote the complex Lie algebras of $\mrm{GSp}(4)$ and $\mrm{GL}(2)$ respectively and let $K_\infty$ and $L_\infty$ denote respectively the group $\R_+^\times \mrm{U}(2, \R)$, regarded as a maximal compact modulo the center subgroup of $\mrm{GSp}(4, \R)_+$, and the group $\R_+^\times \mrm{SO}(2, \R)$, regarded as a maximal compact modulo the center subgroup of $\mrm{GL}(2, \R)_+$, in the standard way. Using the relative Lie algebra cohomology groups, we have the following result.

\begin{pro} \label{decomposition} There is a canonical $G(\A_f)$-equivariant isomorphism
\begin{equation*}
H^4_{dR, !}(S \times Y)\simeq \bigoplus_{\Pi, \sigma} H^3(\mathfrak{gsp}_{4}, K_\infty, \Pi_\infty)^{m(\Pi)} \otimes H^1(\mathfrak{gl}_{2}, L_\infty, \sigma_\infty)^{m(\sigma)} \otimes (\Pi_f \times \sigma_f)
\end{equation*}
where the direct sum is indexed by equivalence classes of cuspidal automorphic representations $\Pi=\Pi_\infty \otimes \Pi_f$ of $\mrm{GSp}(4, \A)$ and $\sigma=\sigma_\infty \otimes \sigma_f$ of $\mrm{GL}(2, \A)$ such that $\Pi_\infty|_{\mrm{GSp}(4, \R)_+} \in P_4$ and $\sigma_\infty |_{\mrm{GL}(2, \R)_+} \in P_2$ and where $m(\Pi)$ and $m(\sigma)$ denote the cuspidal multiplicities of $\Pi$ and $\sigma$ respectively. 
\end{pro}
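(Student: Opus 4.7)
The plan is to reduce via the Künneth isomorphism \eqref{kunneth} to proving the analogous decomposition separately for $H^3_{dR, !}(S)$ and $H^1_{dR, !}(Y)$, and then invoke a Matsushima-type formula for the interior cohomology of each factor. More precisely, I would prove
\begin{eqnarray*}
H^3_{dR, !}(S) &\simeq& \bigoplus_{\Pi} H^3(\mathfrak{gsp}_4, K_\infty; \Pi_\infty)^{m(\Pi)} \otimes \Pi_f, \\
H^1_{dR, !}(Y) &\simeq& \bigoplus_{\sigma} H^1(\mathfrak{gl}_2, L_\infty; \sigma_\infty)^{m(\sigma)} \otimes \sigma_f,
\end{eqnarray*}
tensor them, and combine with \eqref{kunneth} to get the stated formula. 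The fact that the indexing can be restricted to $\Pi_\infty|_{\mrm{GSp}(4,\R)_+} \in P_4$ and $\sigma_\infty|_{\mrm{GL}(2,\R)_+} \in P_2$ follows from the Vogan--Zuckerman classification of irreducible unitary $(\mathfrak{g}, K_\infty)$-modules with nontrivial cohomology with trivial coefficients: in the cohomological degree $3$ (resp.\ $1$), which is the complex dimension of $S$ (resp.\ $Y$), the only contributing $\Pi_\infty$ (resp.\ $\sigma_\infty$) are the members of the discrete series packets listed, for the reasons recalled in the paragraph preceding the proposition.

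For each factor I would argue in three steps. First, by the de Rham theorem and passage to the inductive limit over levels, identify $H^*_{dR}(S^K \times Y^L) \otimes_\Q \C$ with the relative Lie algebra cohomology $H^*(\mathfrak{g}, K_\infty \times L_\infty; \mathcal{A}(G(\Q) \backslash G(\A))^{K \times L})$ of the space of automorphic forms, and similarly for compactly supported cohomology in terms of rapidly decreasing forms, à la Borel--Franke. Second, show that the image of compactly supported cohomology in cohomology without support coincides with the cuspidal part: the cuspidal cohomology always injects into $H^*_{dR, !}$, and the reverse inclusion, i.e.\ the vanishing of the Eisenstein contribution in the image, follows in degree $1$ for $Y$ from Borel's classical result, and for $S$ in the middle degree $3$ from Franke's theorem together with the fact that Eisenstein classes coming from proper parabolics either have strictly larger filtration weight or are killed by Poincaré duality with compactly supported classes. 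Third, apply Matsushima's formula to decompose the cuspidal part:
$$
H^*_{\mrm{cusp}}(\mathfrak{g}, K_\infty \times L_\infty; \mathcal{A}_{\mrm{cusp}}(G(\Q) \backslash G(\A))) = \bigoplus_{\Pi, \sigma} m(\Pi) m(\sigma)\, H^*(\mathfrak{g}, K_\infty \times L_\infty; (\Pi \otimes \sigma)_\infty) \otimes (\Pi \otimes \sigma)_f.
$$

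The main obstacle, conceptually, is the second step for the Siegel threefold, namely ensuring that no Eisenstein cohomology survives in the interior cohomology in the middle degree $3$. For the modular curve this is classical, but for $S$ one must analyze the cuspidal supports of parabolic Eisenstein series attached to the Siegel, Klingen and Borel parabolics of $\mrm{GSp}(4)$ and verify, using infinitesimal character considerations and the regularity of the trivial coefficient system, that the associated cohomology classes are either non-square-integrable or lie outside the image of $H^3_{dR, c} \to H^3_{dR}$. Once these spectral arguments are in place, the identification with $(\mathfrak{g}, K)$-cohomology and the decomposition over cuspidal representations are formal, and the $G(\A_f)$-equivariance is automatic from the construction.
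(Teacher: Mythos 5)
Your reduction via the K\"unneth isomorphism \eqref{kunneth} to the two factor statements is exactly the reduction the paper performs, so the overall architecture agrees. Where you diverge is in how you establish the factor statements themselves: the paper simply cites its prior work (equations (8) and (9) of \cite{lemma2}) for $H^3_{dR,!}(S)$ and the identification $H^1_{dR,!}(Y^L)\simeq H^1_{(2)}(Y^L)$ together with \cite{scholze} Lem.~12.3 for $H^1_{dR,!}(Y)$, both of which go through $L^2$-cohomology and the discrete spectral decomposition, whereas you propose to re-prove everything from scratch via the Borel--Franke identification of de Rham cohomology with $(\mathfrak{g},K)$-cohomology of automorphic forms, followed by an Eisenstein cohomology analysis in the middle degree. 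Your route is self-contained but substantially more laborious, especially the step you yourself flag as the main obstacle: controlling the Eisenstein contribution to interior cohomology of $S$ in degree $3$ by inspecting the cuspidal supports of the Siegel, Klingen and Borel parabolics. The paper's route sidesteps all of this by invoking the known equivalence of interior and $L^2$-cohomology for these Shimura varieties.

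One point you should be more careful about: your second step asserts that interior cohomology coincides with \emph{cuspidal} cohomology, but the clean statement available (and the one underlying the cited references) is that interior cohomology coincides with $L^2$-cohomology, which a priori decomposes along the full discrete spectrum, cuspidal \emph{plus residual}. To land on the statement of the proposition, which is indexed only by cuspidal $\Pi$ and $\sigma$, you must additionally check that residual automorphic representations of $\mrm{GSp}(4,\A)$ contribute nothing to degree $3$ with trivial coefficients (and similarly for $\mrm{GL}(2,\A)$ in degree $1$, which is easy since the residual spectrum there consists of characters contributing only in degrees $0$ and $2$). For $\mrm{GSp}(4)$ this requires examining the archimedean components of the residual representations and verifying that they are not in the discrete series packet $P_4$ and have no $(\mathfrak{gsp}_4, K_\infty)$-cohomology in degree $3$; subsuming this under ``vanishing of the Eisenstein contribution'' glosses over a genuine verification. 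The conclusion is correct, but this gap should be filled explicitly if you want your alternative proof to be complete.
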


\begin{proof} This follows from (\ref{kunneth}) and the analogous statements for $ H^3_{dR, !}(S)$, which follows from \cite{lemma2} (8) and (9), and for $H^1_{dR, !}(Y)$, which follows from \cite{scholze} Lem. 12.3 once noticed that for the smooth curves $Y^L$, one has a canonical isomorphism $H^1_{dR, !}(Y^L) \simeq H^1_{(2)}(Y^L)$, where $H^1_{(2)}(Y^L)$ denotes $L^2$-cohomology.
\end{proof}

Let $\mathfrak{k}$ denote the complex Lie algebra of $K_\infty$ and $\mathfrak{l}$ denote the complex Lie algebra of $L_\infty$.

\begin{pro} \label{dimension} For any $\Pi_\infty^{p,q} \in P_4$, resp. $\sigma_\infty^{r,s} \in P_2$, we have
\begin{eqnarray*}
H^3(\mathfrak{gsp}_{4}, K_\infty, \Pi_\infty^{p,q}) &=&\Hom_{K_\infty}\left( \bigwedge^3  \mathfrak{gsp}_{4}/\mathfrak{k}, \Pi_\infty^{p,q} \right),\\
H^1(\mathfrak{gl}_{2}, L_\infty, \sigma_\infty^{r,s}) &=&\Hom_{L_\infty}\left(\mathfrak{gl}_{2}/\mathfrak{l},\sigma_\infty^{r,s} \right)
\end{eqnarray*}
and these $\C$-vector spaces are one-dimensional. 
\end{pro}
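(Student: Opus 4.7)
The plan is to combine two standard ingredients: vanishing of the differentials in the $(\mathfrak{g}, K)$-cohomology cochain complex, and a $K_\infty$-type computation on the exterior algebras of the tangent spaces.

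For the two displayed identifications, I would invoke the general result (Borel-Wallach, \emph{Continuous cohomology, discrete subgroups and representations of reductive groups}, Ch. II) that if $V$ is an irreducible unitary $(\mathfrak{g}, K)$-module with the same infinitesimal character as the trivial representation, then the Laplacian on the cochain complex $\Hom_{K_\infty}(\bigwedge^\bullet(\mathfrak{g}/\mathfrak{k}), V)$ agrees with a constant multiple of the Casimir operator on $V$, and therefore vanishes. By Kuga's lemma this forces the differentials themselves to be zero, so the cohomology in each degree coincides with the cochain space. Applied to the discrete series in $P_4$ and $P_2$, whose infinitesimal characters are trivial by the definition of these sets, this gives the two displayed formulae verbatim.

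For the dimension statement, I would use the Cartan decomposition $\mathfrak{gsp}_4/\mathfrak{k} = \mathfrak{p}^+ \oplus \mathfrak{p}^-$, with $\dim_{\C} \mathfrak{p}^\pm = 3$ reflecting the Hermitian symmetric structure on $\mathcal{H}^+$, and analogously $\mathfrak{gl}_2/\mathfrak{l} = \mathfrak{p}^+ \oplus \mathfrak{p}^-$ with $\dim_{\C} \mathfrak{p}^\pm = 1$. The exterior algebras split into $K_\infty$-stable bihomogeneous pieces $\bigwedge^p \mathfrak{p}^+ \otimes \bigwedge^q \mathfrak{p}^-$, producing the Hodge decomposition of $(\mathfrak{g}, K)$-cohomology. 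The Harish-Chandra parametrisation together with the Blattner formula shows that the minimal $K_\infty$-type of $\Pi_\infty^{p,q}$ is precisely the $K_\infty$-type dual to $\bigwedge^p \mathfrak{p}^+ \otimes \bigwedge^q \mathfrak{p}^-$ and occurs in $\Pi_\infty^{p,q}$ with multiplicity one, while matching no other bihomogeneous piece. An identical but simpler analysis handles $\sigma_\infty^{r,s}$ using that each $\mathfrak{p}^\pm$ is one-dimensional. In practice, the $\mrm{GSp}(4)$ numerical computation is the $k = k' = 0$ specialisation of \cite{lemma2} Prop. 3.1 and can simply be cited from there.

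The genuine difficulty sits in the multiplicity-one claim for $\mrm{GSp}(4)$: since $\dim_{\C} \bigwedge^p \mathfrak{p}^+ \otimes \bigwedge^q \mathfrak{p}^- = \binom{3}{p}\binom{3}{q}$, one must verify that the $K_\infty$-isotypic component of the Blattner $K_\infty$-type of $\Pi_\infty^{p,q}$ inside this piece is one-dimensional, and contributes nothing to the other bidegrees. This is precisely the content of the Vogan-Zuckerman classification of cohomological representations specialised to our four discrete series, and once that input is granted the proposition follows at once.
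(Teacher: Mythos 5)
Your proposal is correct and follows essentially the same route as the paper: the identification of cohomology with cochain spaces comes from vanishing of the Laplacian (Kuga's lemma applied to representations with trivial infinitesimal character, i.e.\ Borel--Wallach II \S 3, Prop.~3.1), and the dimension count is the $K$-type computation of Borel--Wallach II, Thm.~5.3. The paper simply quotes these two references directly, whereas you unfold the underlying mechanism (Laplacian $=$ Casimir, bigraded Hodge decomposition of $\bigwedge^\bullet \mathfrak{p}$, Blattner/Vogan--Zuckerman); the one small terminological slip is calling the minimal $K_\infty$-type ``dual to $\bigwedge^p\mathfrak{p}^+\otimes\bigwedge^q\mathfrak{p}^-$'', which is reducible for $\mathrm{GSp}(4)$ --- the correct statement, which you make immediately afterwards, is that the Blattner $K_\infty$-type sits inside that bigraded piece with multiplicity one and does not occur in the other bidegrees.
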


\begin{proof} According to \cite{borel-wallach} II. \S 3, Prop. 3.1, for any $\Pi_\infty^{p,q} \in P_4$, resp. $\sigma_\infty^{r,s} \in P_2$, the $(\mathfrak{gsp}_{4}, K_\infty)$-complex of $\Pi_\infty^{p,q}$, resp. the $(\mathfrak{gl}_{2}, L_\infty)$-complex of $\sigma_\infty^{r,s}$, has zero differential. This implies the two equalities in the statement of the Proposition. The statement about the dimensions is a particular case of loc. cit. II. Thm. 5.3.
\end{proof}

Let us fix a cuspidal automorphic representation $\Pi=\Pi_\infty \otimes \Pi_f$, resp. $\sigma=\sigma_\infty \otimes \sigma_f$, of $\mrm{GSp}(4, \A)$, resp. $\mrm{GL}(2, \A)$. Note that, by definition of the relative Lie algebra cohomology complex 
$$
H^3(\mathfrak{gsp}_{4}, K_\infty, \Pi_\infty)=H^3(\mathfrak{gsp}_{4}, K_\infty, \Pi_\infty|_{\mrm{GSp}(4, \R)_+}).
$$
Assume that $\Pi_\infty|_{\mrm{GSp}(4, \R)_+} \in P_4$, resp. $\sigma_\infty |_{\mrm{GL}(2, \R)_+} \in P_2$. As the non-archimedean part $\Pi_f$ of $\Pi$ contributes to coherent cohomology,  it is defined over a number field (see \cite{bhr} for more details). The fact that the analogous statement for $\sigma_f$ is true is proved in \cite{waldspurger}. Let us denote by $E$ a fixed number field over which both $\Pi_f$ and $\sigma_f$ are defined. Let $H^4_{B,!}(S \times Y, \Q)$ denote the image of the Betti cohomology with compact support in the Betti cohomology without support, with rational coefficients. We will use the following notation
\begin{eqnarray*}
M_B(\Pi_f \times \sigma_f) &=& \mrm{Hom}_{E[G(\A_f)]}(\Pi_f \times \sigma_f, H^4_{B, !}(S \times Y, \Q) \otimes_{\Q} E).
\end{eqnarray*}
In the next result, we need to assume that $\Pi$ is globally generic. Let us recall that this means that there exists $\Psi \in \Pi$ such that the function $W_\Psi: \mrm{GSp}(4, \A) \rightarrow \C$ defined by (\ref{global-whittaker}) in section \ref{section-zeta} is not identically zero.

\begin{pro} \label{hyp-CAP} Assume that $\Pi$ is globally generic. Then, the natural inclusion of inner cohomology in usual cohomology $H^4_{B, !}(S \times Y, \Q) \subset H^4_{B}(S \times Y, \Q)$ induces an isomorphism
$$
M_B(\Pi_f \times \sigma_f) = \mrm{Hom}_{E[G(\A_f)]}(\Pi_f \times \sigma_f, H^4_{B}(S \times Y, \Q) \otimes_{\Q} E).
$$
\end{pro}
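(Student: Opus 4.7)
The plan is to analyze the cokernel $Q = H^4_B(S \times Y, \Q)/H^4_{B,!}(S \times Y, \Q)$ and to show that, after extending scalars to $E$, the space $\Hom_{E[G(\A_f)]}(\Pi_f \times \sigma_f, Q \otimes_\Q E)$ vanishes. Since the inclusion of inner into total cohomology yields an injection
\[
M_B(\Pi_f \times \sigma_f) \hookrightarrow \Hom_{E[G(\A_f)]}(\Pi_f \times \sigma_f, H^4_B(S \times Y, \Q) \otimes_\Q E),
\]
this vanishing on $Q$ promotes the injection to the desired isomorphism.

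The first step is to apply the K\"unneth formula to both $H^4_B$ and $H^4_{B,!}$; the formula for the latter follows from the one for compactly supported cohomology by taking images. Using Lem. \ref{interior-coh} together with $H^2_B(Y) = 0$ (Poincar\'e duality), only the K\"unneth summands with $(i,j)=(3,1)$ and $(i,j)=(4,0)$ contribute to $H^4$. The $(4,0)$ summand is killed because the cuspidality of $\sigma_f$ excludes the trivial $\mrm{GL}(2, \A_f)$-character of $H^0_B(Y)$. For the $(3,1)$ summand, the resulting piece $Q_{3,1}$ of $Q$ sits in an exact sequence of $G(\A_f)$-modules
\[
0 \to H^3_{B,!}(S) \otimes \bigl(H^1_B(Y)/H^1_{B,!}(Y)\bigr) \to Q_{3,1} \to \bigl(H^3_B(S)/H^3_{B,!}(S)\bigr) \otimes H^1_B(Y) \to 0,
\]
so by left exactness of $\Hom$ it suffices to prove vanishing of the $\Hom$ at both ends.

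At the left end, strong multiplicity one for $\mrm{GL}(2)$ applied to the cuspidal $\sigma$ rules out $\sigma_f$ appearing in the Eisenstein quotient $H^1_B(Y)/H^1_{B,!}(Y)$. At the right end, one must show that $\Pi_f$ does not appear in the Eisenstein cohomology $H^3_B(S)/H^3_{B,!}(S)$ of the Siegel threefold. By Franke's theorem, every irreducible $\mrm{GSp}(4, \A_f)$-subquotient there is a constituent of a parabolic induction $\mrm{Ind}_{P(\A_f)}^{\mrm{GSp}(4, \A_f)} \tau_f$ for $P$ the Siegel or Klingen parabolic of $\mrm{GSp}(4)$ and $\tau$ a cuspidal representation of its Levi. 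The hypothesis that $\Pi$ is globally generic is essential at this step: by the Jiang-Soudry rigidity theorem for globally generic cuspidal representations of $\mrm{GSp}(4, \A)$ (invoked elsewhere in the paper), a coincidence $\Pi_f \cong \pi_f$ with the finite part of such an Eisenstein constituent would force $\Pi$ itself to be non-cuspidal, a contradiction; equivalently, globally generic representations are not CAP (Piatetski-Shapiro--Soudry). This rigidity step is the main obstacle, while the K\"unneth reduction and the $\mrm{GL}(2)$ statement are routine.
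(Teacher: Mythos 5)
Your plan (reduce via K\"unneth, then show the $(\Pi_f\times\sigma_f)$-isotypic parts of the K\"unneth pieces of $H^4_B/H^4_{B,!}$ vanish) matches the paper's strategy, and your isolation of the $(3,1)$ and $(4,0)$ summands and the routine $\mrm{GL}(2)$ steps are fine. The paper, however, simply invokes Weissauer's Theorem~1.1 (\cite{weissauer2}), which, for a non-CAP representation $\Pi_f$ appearing in cohomology, gives both $\mrm{Hom}(\Pi_f, H^3_B(S))=\mrm{Hom}(\Pi_f, H^3_{B,!}(S))$ and $\mrm{Hom}(\Pi_f, H^4_B(S))=0$ in one stroke. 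You instead kill the $(4,0)$ piece on the $\mrm{GL}(2)$ side using $H^0_B(Y)$ --- a perfectly valid alternative --- and re-derive the $(3,1)$ vanishing from Franke's description of Eisenstein cohomology, which is essentially the content of Weissauer's proof.

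Two imprecisions in the $(3,1)$ step are worth flagging. First, the citation ``Jiang--Soudry rigidity'' is misplaced: the theorem of Jiang--Soudry invoked elsewhere in the paper is multiplicity one for globally generic cuspidal representations of $\mrm{GSp}(4)$ \emph{within the cuspidal spectrum}; it is not what excludes $\Pi_f$ from appearing as the finite part of an Eisenstein constituent. What is actually needed is the non-CAP property of globally generic cuspidal representations, due to Piatetski-Shapiro--Soudry (\cite{pssoudry2}), which you do mention at the end of the sentence --- so the argument survives, but the first attribution is wrong. Second, the intermediate assertion that such a coincidence ``would force $\Pi$ itself to be non-cuspidal'' is a misstatement: CAP representations \emph{are} cuspidal by definition; they are cuspidal representations nearly equivalent to a constituent of a parabolic induction. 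The correct conclusion is that $\Pi$ would be CAP, contradicting global genericity. With that corrected, your proof is sound and differs from the paper's only in that you re-derive a special case of Weissauer's Theorem~1.1 rather than citing it.
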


\begin{proof} As $\Pi$ is globally generic, it is not CAP according to \cite{pssoudry2} Thm. 1.1. Hence, we can apply \cite{weissauer2} Thm. 1.1 which implies that 
\begin{eqnarray*}
\mrm{Hom}_{E[\mrm{GSp}(4, \A_f)]}(\Pi_f, H^3_{B}(S, \Q) \otimes_{\Q} E) &=& \mrm{Hom}_{E[\mrm{GSp}(4, \A_f)]}(\Pi_f, H^3_{B,!}(S, \Q) \otimes_{\Q} E)\\
\mrm{Hom}_{E[\mrm{GSp}(4, \A_f)]}(\Pi_f, H^4_{B}(S, \Q) \otimes_{\Q} E) &=& 0.
\end{eqnarray*}
Note that  \cite{weissauer2} Thm. 1.1 assumes that $\Pi_\infty \simeq \Pi_\infty^H$, but that this fact is not used in the proof and so the same proof is true when $\Pi_\infty \simeq \Pi_\infty^W$. Furthermore, as there are no CAP representations for $\mrm{GL}(2)$ the same argument as in the proof of  \cite{weissauer2} Thm. 1.1 shows that
$$
\mrm{Hom}_{E[\mrm{GL}(2, \A_f)]}(\sigma_f, H^1_{B}(Y, \Q) \otimes_{\Q} E) = \mrm{Hom}_{E[\mrm{GL}(2, \A_f)]}(\sigma_f, H^1_{B,!}(Y, \Q) \otimes_{\Q} E).
$$
Hence, by the K\"unneth formula, we have 
$$
\mrm{Hom}_{E[G(\A_f)]}(\Pi_f \times \sigma_f, H^4_{B}(S \times Y, \Q) \otimes_{\Q} E)=M_B(\Pi_f \times \sigma_f).
$$
\end{proof}

Let $M_B(\Pi_f)$ and $M_B(\sigma_f)$ be defined similarly as $M_B(\Pi_f \times \sigma_f)$ via the Betti cohomology of $S$ and $Y$ respectively. Then, the K\"unneth formula implies that 
$$
M_B(\Pi_f \times \sigma_f)=M_B(\Pi_f) \otimes_{E} M_B(\sigma_f).
$$
According to Prop. \ref{decomposition}, Prop. \ref{dimension} and the comparison isomorphism between de Rham and Betti cohomology, these are finite dimensional $\Q$-vector spaces endowed with a $\Q$-linear action of $E$ and additional structures as follows. Let $M_B(\Pi_f)_{\C}$ and $M_B(\sigma_f)_{\C}$ denote the vector spaces obtained after extending the scalars from $\Q$ to $\C$. Then we have the Hodge decompositions
\begin{eqnarray} \label{hodge}
M_B(\Pi_f)_{\C} &=& M(\Pi_f)^{3, 0} \oplus M(\Pi_f)^{2, 1} \oplus M(\Pi_f)^{1, 2} \oplus M(\Pi_f)^{0, 3},\\
M_B(\sigma_f)_{\C}&=& M(\sigma_f)^{1, 0} \oplus M(\sigma_f)^{0, 1}
\label{hodge2}
\end{eqnarray}
where
\begin{eqnarray*}
M(\Pi_f)^{3, 0} &=&  \bigoplus_{\sigma: E \rightarrow \C} H^3(\mathfrak{gsp}_{4}, K_\infty, \Pi_\infty^{3,0})^{m(\Pi_\infty^H \otimes \Pi_f)},\\
M(\Pi_f)^{2, 1} &=& \bigoplus_{\sigma: E \rightarrow \C} H^3(\mathfrak{gsp}_{4}, K_\infty, \Pi_\infty^{2,1})^{m(\Pi_\infty^W \otimes \Pi_f)}, \\
M(\Pi_f)^{1, 2} &=& \bigoplus_{\sigma: E \rightarrow \C} H^3(\mathfrak{gsp}_{4}, K_\infty, {\Pi}_\infty^{1,2})^{m(\Pi_\infty^W \otimes \Pi_f)},\\
M(\Pi_f)^{0, 3} &=& \bigoplus_{\sigma: E \rightarrow \C} H^3(\mathfrak{gsp}_{4}, K_\infty, {\Pi}_\infty^{0,3})^{m(\Pi_\infty^H \otimes \Pi_f)},\\
M(\sigma_f)^{1,0} &=& \bigoplus_{\sigma: E \rightarrow \C} H^1(\mathfrak{gl}_{2}, L_\infty, {\sigma}_\infty^{1,0})^{m({\sigma}_\infty \otimes \sigma_f)},\\
M(\sigma_f)^{0,1} &=& \bigoplus_{\sigma: E \rightarrow \C} H^1(\mathfrak{gl}_{2}, L_\infty, \sigma_\infty^{0,1})^{m(\sigma_\infty \otimes \sigma_f)}.
\end{eqnarray*}
Furthermore $M_B(\Pi_f \times \sigma_f)$ has the tensor product Hodge structure
\begin{equation} \label{hodge-dec}
M_B(\Pi_f \times \sigma_f)_{\C} = M^{4,0} \oplus M^{3,1} \oplus M^{2,2} \oplus M^{1,3} \oplus M^{0,4}.
\end{equation}
The following definition is taken from \cite{beilinson2} \S 7.

\begin{defn} \label{realmixed} Let $A$ be a subring of $\R$. A real mixed $A$-Hodge structure is a mixed $A$-Hodge structure whose underlying $A$-vector space is endowed with an involution $F_\infty$ stabilizing the weight filtration and whose $\mbb{C}$-antilinear complexification $\overline{F}_\infty$ stabilizes the Hodge filtration. 
\end{defn}

Let $\mathrm{MHS}_{A}^+$ denote the abelian category of real mixed $A$-Hodge structures.

\begin{defn} \label{mhs-coeff} Let $F$ be a ring and let $A$ be a subring of $\R$. A real mixed $A$-Hodge structure with coefficients in $F$ is a pair $(M, s)$ where $M$ is an object of $\mathrm{MHS}_{A}^+$  and $s: F \lra \mrm{End}_{\mathrm{MHS}_A^+}(M)$ is a ring homomorphism.
\end{defn}

For any ring $F$, let $\mathrm{MHS}_{\R, F}^+$ denote the abelian category of real mixed $\R$-Hodge structures with coefficients in $F$. The proof of the following result is straightforward.

\begin{pro} \label{hodge-deRham-pif} Let $F_\infty$ be the involution on $M_B(\Pi_f \times \sigma_f)$ induced by the complex conjugation on $S(\mbb{C}) \times Y(\C)$. Then $(M_B(\Pi_f \times \sigma_f), F_\infty)$ is an object of $\mathrm{MHS}_{\mbb{Q}, E}^+$ which is pure of weight $4$.
\end{pro}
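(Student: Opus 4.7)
The plan is to decompose the required structure into three pieces and verify each: the pure $\Q$-Hodge structure of weight $4$, the involution $F_\infty$ with the required filtration-compatibility, and the commuting $E$-action.

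First I would observe that for each pair $(K, L)$ of compact open subgroups, the variety $S^K \times Y^L$ is smooth quasi-projective over $\Q$, so Deligne's mixed Hodge theory yields that the interior cohomology $H^4_{B,!}(S^K \times Y^L, \Q)$, being the image of $H^4_c$ (weights $\leq 4$) inside $H^4$ (weights $\geq 4$), is pure of weight $4$. Since the transition morphisms $[g]$ induced by $g \in G(\A_f)$ are morphisms of $\Q$-schemes, they induce morphisms of pure $\Q$-Hodge structures; passing to the colimit over $(K, L)$ and extending scalars, $H^4_{B,!}(S \times Y, \Q) \otimes_{\Q} E$ becomes a pure $\Q$-Hodge structure of weight $4$ carrying a commuting action of $E[G(\A_f)]$ by Hodge morphisms. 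Applying $\mrm{Hom}_{E[G(\A_f)]}(\Pi_f \times \sigma_f, -)$ then endows $M_B(\Pi_f \times \sigma_f)$ with an inherited pure $\Q$-Hodge structure of weight $4$ and a commuting $E$-action; purity is already visible through the Hodge decomposition (\ref{hodge-dec}).

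Next, complex conjugation on $S(\C) \times Y(\C)$ is induced by the $\Q$-rational structure of $S \times Y$ and so defines a $\Q$-linear involution $F_\infty$ on the $\Q$-Betti cohomology. It commutes with the $G(\A_f)$-action (Hecke operators being realised by $\Q$-morphisms) and with the $E$-action on coefficients, hence descends to $M_B(\Pi_f \times \sigma_f)$. Purity makes the weight filtration trivial, so $F_\infty$ stabilizes it automatically. For the Hodge filtration, the standard fact that the $\C$-linear extension of complex conjugation on $H^n(X(\C), \C)$ for a smooth $\Q$-variety $X$ swaps $H^{p,q}$ with $H^{q,p}$ implies that the $\C$-antilinear extension $\overline{F}_\infty$ preserves each $H^{p,q}$, and in particular preserves the Hodge filtration. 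Combined with the commutation of the $E$-action with $F_\infty$, this verifies the conditions of Def.~\ref{realmixed} and Def.~\ref{mhs-coeff}.

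There is no genuine obstacle in this argument; the only delicate bookkeeping is checking that $F_\infty$, originally defined on the Betti side of $H^4_{B,!}(S \times Y, \Q) \otimes_{\Q} E$, commutes with both the Hecke action and the $E$-action so as to descend cleanly to the $\Pi_f \times \sigma_f$-isotypic Hom space. Both commutativities are immediate: the Hecke action comes from $\Q$-morphisms, while the $E$-action lives on a disjoint tensor factor and therefore does not interact with complex conjugation at all.
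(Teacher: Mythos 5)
Your argument is correct, and since the paper dismisses the proof as "straightforward" without giving details, your write-up is a faithful unpacking of what the author evidently has in mind: purity of interior cohomology of a smooth variety, $\Q$-rationality and $G(\A_f)$-equivariance of the conjugation $F_\infty$ (so it descends to the isotypic Hom space), and the standard computation that the $\C$-linear extension of $c^*$ swaps $M^{p,q}$ with $M^{q,p}$, hence the $\C$-antilinear extension $\overline{F}_\infty$ preserves each $M^{p,q}$ and a fortiori the Hodge filtration. The only cosmetic caveat is that one should really state the purity and the Hodge-morphism properties at each finite level $(K,L)$ and then observe that the finitely many pieces surviving in $M_B(\Pi_f\times\sigma_f)$ (finite-dimensional by Prop.~\ref{ranks}) inherit them, rather than literally invoking a "pure Hodge structure" on the infinite-dimensional colimit, but this does not affect the substance.
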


Furthermore, it follows from \cite{harris1} Cor. 2.3.1 that there exists a filtered $E$-vector space $(M_{dR}(\Pi_f \times \sigma_f), F^* M_{dR}(\Pi_f \times \sigma_f))$ and a comparison isomorphism
\begin{equation} \label{comparison}
I_\infty: M_{B}(\Pi_f \times \sigma_f)_{\C} \rightarrow M_{dR}(\Pi_f \times \sigma_f)_{\C}
\end{equation}
such that the Hodge filtration of $M_B(\Pi_f \times \sigma_f)$, defined as
$$
F^p_{\C}=\bigoplus_{p' \geq p} M^{p', q},
$$
satisfies $I_\infty(F^p_{\C})=F^p  M_{dR}(\Pi_f \times \sigma_f)_{\C}$. A similar statement holds for $M_B(\Pi_f)$ and $M_B(\sigma_f)$.

\subsection{The Deligne rational structure and the cycle class} \label{sect23}

\subsubsection{The Deligne rational structure} Let $\Pi=\Pi_\infty \otimes \Pi_f$ and $\sigma=\sigma_\infty \otimes \sigma_f$ be irreducible cuspidal automorphic representations of $\mrm{GSp}(4, \A)$ and $\mrm{GL}(2, \A)$ respectively. Assume that $\Pi_\infty|_{\mrm{GSp}(4, \R)_+} \in P_4$ and $\sigma_\infty|_{\mrm{GL}(2, \R)_+} \in P_2$. For any integer $n$, let $M_B(\Pi_f \times \sigma_f, n)$ denote the object of $\mathrm{MHS}_{\mbb{Q}, E}^+$ defined as 
$$
M_B(\Pi_f \times \sigma_f, n)=M_B(\Pi_f \times \sigma_f) \otimes_{\Q} \Q(n)
$$ 
where $\Q(n)$ is the $n$-th tensor power of the Tate object. Let $M_B(\Pi_f \times \sigma_f, n)^\pm$ denote the subspace of $M_B(\Pi_f \times \sigma_f, n)$ where $F_\infty$ acts as $\pm 1$. The comparison isomorphism $I_\infty^{-1}$ (see (\ref{comparison}))
between de Rham and Betti cohomology, sends the real structure $M_{dR}(\Pi_f \times \sigma_f)_{\R}$ of $M_{dR}(\Pi_f \times \sigma_f)_{\C}$ to the real structure $M_B(\Pi_f \times \sigma_f)_{\R}^+ \oplus M_B(\Pi_f \times \sigma_f)_{\R}^-(-1)$ of $M_{B}(\Pi_f \times \sigma_f)_{\C}$, where $M_B(\Pi_f \times \sigma_f)_{\R}^-(-1)$ simply denotes the sub-$\R \otimes_{\Q} E$-module $M_B(\Pi_f \times \sigma_f)_{\R}^- \otimes i\R$ of $M_B(\Pi_f \times \sigma_f)_{\C}^-$. In particular, we have a natural $\R \otimes_{\Q} E$-linear map $$
F^3 M_{dR}(\Pi_f \times \sigma_f)_{\R} \rightarrow M_B(\Pi_f \times \sigma_f)_{\R}^+
$$
defined as the composition of the natural inclusion $F^3 M_{dR}(\Pi_f \times \sigma_f)_{\R} \subset M_{dR}(\Pi_f \times \sigma_f)_{\R}$, of $I_\infty^{-1}$ and of the natural projection $M_B(\Pi_f \times \sigma_f)_{\R}^+ \oplus M_B(\Pi_f \times \sigma_f)_{\R}^-(-1) \rightarrow M_B(\Pi_f \times \sigma_f)_{\R}^+$. Composing with the canonical isomorphism $M_B(\Pi_f \times \sigma_f)_{\R}^+ \simeq M_B(\Pi_f \times \sigma_f, 2)_{\R}^+$ given by multiplication by $(2\pi i)^2$, we obtain the natural map $$F^3 M_{dR}(\Pi_f \times \sigma_f)_{\R} \rightarrow M_B(\Pi_f \times \sigma_f, 2)_{\R}^+.$$

\begin{pro} \label{shortexact} We have the following canonical short exact sequence of $\R \otimes_{\Q} E$-modules 
$$
0 \rightarrow F^3 M_{dR}(\Pi_f \times \sigma_f)_{\R} \rightarrow M_B(\Pi_f \times \sigma_f, 2)_{\R}^+ \rightarrow \mrm{Ext}^1_{\mrm{MHS}_{\R}^+}(\R(0), M_B(\Pi_f \times \sigma_f, 3)_{\R}) \rightarrow 0
$$
where the second map is the map defined above.
\end{pro}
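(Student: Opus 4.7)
The plan is to prove the sequence by an explicit calculation in the Hodge decomposition (\ref{hodge-dec}) of $M_B(\Pi_f \times \sigma_f)_\C = \bigoplus_{p+q=4} M^{p,q}$. Via the isomorphism
$$
\mrm{Ext}^1_{\mrm{MHS}_{\R}^+}(\R(0), M_B(\Pi_f \times \sigma_f, 3)_{\R}) \simeq (M_B(\Pi_f \times \sigma_f, 2)_{\R} \cap M^{0,0})^+
$$
stated just above the proposition, the third map of the claimed sequence identifies with the projection to the $(0,0)$-Hodge component of $M_B(\Pi_f \times \sigma_f, 2)_\C$ (corresponding to $M^{2,2}$ under the Tate twist). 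This projection is well-defined on $M_B(\Pi_f \times \sigma_f, 2)_{\R}^+$ because both complex conjugation and the $\C$-linear extension of $F_\infty$, which sends $M^{p,q}$ to $M^{q,p}$, preserve the $(2,2)$-summand setwise.

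For any $x \in F^3 M_{dR}(\Pi_f \times \sigma_f)_{\R}$, decompose $I_\infty^{-1}(x) = x^+ + i x^-$ with $x^{\pm} \in M_B(\Pi_f \times \sigma_f)_{\R}^{\pm}$. The inclusion $I_\infty^{-1}(x) \in F^3 M_B(\Pi_f \times \sigma_f)_\C = M^{4,0} \oplus M^{3,1}$ yields $(x^+)^{p,q} + i(x^-)^{p,q} = 0$ for all $p \leq 2$. At type $(p,q)=(2,2)$ this gives $(x^+)^{2,2} = -i(x^-)^{2,2}$; since both members are real in $(M^{2,2})_{\R}$ by the reality of $x^\pm$, both components vanish. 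After multiplication by $(2\pi i)^2$, the $(0,0)$-Hodge component of the image of $x$ is therefore zero, proving that the composition of the two maps is zero. Injectivity of the first map follows by the same kind of argument: if $x^+ = 0$ then $i x^- \in F^3 M_B(\Pi_f \times \sigma_f)_\C$, and the reality of $x^-$ (pairing Hodge types $(p,q)$ with $(q,p)$) forces each Hodge component to vanish, hence $x = 0$.

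For exactness in the middle, we must show every $v \in M_B(\Pi_f \times \sigma_f, 2)_{\R}^+$ with vanishing $(0,0)$-Hodge component lies in the image of the first map. Writing $v = u \otimes (2\pi i)^2$ with $u \in M_B(\Pi_f \times \sigma_f)_{\R}^+$ and $u^{2,2} = 0$, define $w \in M_B(\Pi_f \times \sigma_f)_{\R}^-$ by setting $w^{p,q} = i u^{p,q}$ for $p \leq 2$ and $w^{p,q} = -i u^{p,q}$ for $p \geq 3$, the second formula being forced by the reality constraint $w^{p,q} = \overline{w^{q,p}}$. Using that the $\C$-linear extension of $F_\infty$ sends $M^{p,q}$ to $M^{q,p}$ and that $F_\infty u = u$, one checks that $F_\infty w = -w$. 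By construction $u + i w = 2(u^{4,0} + u^{3,1}) \in F^3 M_B(\Pi_f \times \sigma_f)_\C$, so $x := I_\infty(u + i w)$ lies in $F^3 M_{dR}(\Pi_f \times \sigma_f)_{\R}$ and maps to $v$ under the first map. The main obstacle is the careful bookkeeping of the interplay among the Hodge decomposition, complex conjugation, the $F_\infty$-involution, and the Tate twist (which changes the sign of $F_\infty$ on $\Q(n)$ by $(-1)^n$); once these are tracked, the proof reduces to elementary linear algebra on Hodge pieces.
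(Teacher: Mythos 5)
Your argument is correct as linear algebra and takes a genuinely different route from the paper's. The paper's proof is a two-line reduction: it cites \cite{lemma2} Lem. 4.11 (or \cite{nekovar} \S\,2.2--2.3) for the general short exact sequence
$0 \to F^0 H_{dR,\R} \to H_{B,\R}^-(-1) \to \mrm{Ext}^1_{\mrm{MHS}_\R^+}(\R(0),H) \to 0$
valid for any pure weight $\leq -2$ object $H$ of $\mrm{MHS}_\R^+$, applies it to $H = M_B(\Pi_f\times\sigma_f,3)$, and untwists via the canonical isomorphisms $F^3 M_{dR,\R}\simeq F^0 M_{dR}(\,\cdot\,,3)_\R$ and $M_B(\,\cdot\,,2)_\R^+\simeq M_B(\,\cdot\,,3)_\R^-(-1)$. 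You instead unpack the statement into an explicit Hodge-bidegree computation, building the $F_\infty$-antiinvariant correction $w$ by hand; this is a nice self-contained check of what the abstract formula asserts. The one point to be careful about is that you invoke the isomorphism $\mrm{Ext}^1_{\mrm{MHS}_\R^+}(\R(0), M_B(\,\cdot\,,3)_\R)\simeq (M_B(\,\cdot\,,2)_\R\cap M^{0,0})^+$ as a given, describing it as ``stated just above the proposition''; in the paper this isomorphism is stated without proof in the introduction and is \emph{deduced from} Prop.\ \ref{shortexact} in the remark that immediately follows it. So as written there is a mild circularity: to make your argument independent of the proposition you should justify that identification directly from the standard description of $\mrm{Ext}^1$ in $\mrm{MHS}_\R^+$ for a negative-weight object (i.e.\ the same \cite{lemma2} Lem. 4.11 or Carlson-type formula), rather than from the surrounding text. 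Once that identification is granted by an independent reference, your verification of injectivity, vanishing of the composite, and exactness in the middle (the construction of $w$ with $w^{p,q}=\pm i\,u^{p,q}$, $F_\infty w=-w$, and $u+iw\in F^3$) is correct.
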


\begin{proof}
As $M_B(\Pi_f \times \sigma_f, 3)$ is pure of weight $-2$, it follows for example from \cite{lemma2} Lem. 4.11 (see also \cite{nekovar} sections 2.2 and 2.3) that we have the canonical short exact sequence
$$
0 \rightarrow F^0 M_{dR}(\Pi_f \times \sigma_f, 3)_{\R} \rightarrow M_B(\Pi_f \times \sigma_f, 3)_{\R}^-(-1) \rightarrow \mrm{Ext}^1_{\mrm{MHS}_{\R}^+}(\R(0), M_B(\Pi_f \times \sigma_f, 3)_{\R}) \rightarrow 0
$$
where the second map is defined similarly as above. Furthermore, we have the following canonical isomorphisms $F^3 M_{dR}(\Pi_f \times \sigma_f)_{\R} \simeq F^0 M_{dR}(\Pi_f \times \sigma_f, 3)_{\R}$ and $M_B(\Pi_f \times \sigma_f, 2)_{\R}^+ \simeq M_B(\Pi_f \times \sigma_f, 3)_{\R}^-(-1)$. The conclusion follows.
\end{proof}

\begin{rems} Let us denote by $(M_B(\Pi_f \times \sigma_f, 2)_{\R} \cap M^{0, 0})^+$ the vectors of $M_B(\Pi_f \times \sigma_f, 2)_{\R}^+$ which have Hodge type $(0, 0)$. It is straightforward to deduce from the previous result a canonical isomorphism
$$
\mrm{Ext}^1_{\mrm{MHS}_{\R}^+}(\R(0), M_B(\Pi_f \times \sigma_f, 3)_{\R}) \simeq (M_B(\Pi_f \times \sigma_f, 2)_{\R} \cap M^{0, 0})^+
$$
as claimed in the introduction. Note also that $
\mrm{Ext}^1_{\mrm{MHS}_{\R}^+}(\R(0), M_B(\Pi_f \times \sigma_f, 3)_{\R})$ can be thought as the $(\Pi_f \times \sigma_f)$-isotypic component of a Deligne-Beilinson cohomology space.
\end{rems}

\begin{pro} \label{ranks} The ranks of the $\R \otimes_{\Q} E$-modules $F^3 M_{dR}(\Pi_f \times \sigma_f)_{\R}$, $M_B(\Pi_f \times \sigma_f, 2)_{\R}^+$ and $\mrm{Ext}^1_{\mrm{MHS}_{\R}^+}(\R(0), M_B(\Pi_f \times \sigma_f, 3)_{\R})$ are finite and equal to $2m(\Pi_\infty^H \otimes \Pi_f)+m(\Pi_\infty^W \otimes \Pi_f)$, $2m(\Pi_\infty^H \otimes \Pi_f)+2m(\Pi_\infty^W \otimes \Pi_f)$ and $m(\Pi_\infty^W \otimes \Pi_f)$ respectively.
\end{pro}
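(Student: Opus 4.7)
My plan is to reduce the proposition to a direct count of Hodge numbers, which follows from the material assembled so far.

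\textbf{Step 1 (Hodge numbers).} Write $m_H = m(\Pi_\infty^H \otimes \Pi_f)$, $m_W = m(\Pi_\infty^W \otimes \Pi_f)$. By strong multiplicity one for $\mathrm{GL}(2)$ one has $m(\sigma_\infty \otimes \sigma_f)=1$. Combining the Künneth formula $M_B(\Pi_f \times \sigma_f) = M_B(\Pi_f) \otimes_E M_B(\sigma_f)$ with (\ref{hodge}), (\ref{hodge2}) and the one-dimensionality in Prop. \ref{dimension}, the Künneth decomposition of (\ref{hodge-dec}) has ranks, over $\C \otimes_\Q E$,
\[
\dim M^{4,0}=\dim M^{0,4}=m_H,\quad \dim M^{3,1}=\dim M^{1,3}=m_H+m_W,\quad \dim M^{2,2}=2m_W.
\]

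\textbf{Step 2 ($F^3$).} Via the comparison isomorphism $I_\infty$ of (\ref{comparison}), $F^3 M_{dR}(\Pi_f \times \sigma_f)_\C$ corresponds to $M^{4,0} \oplus M^{3,1}$. Hence the rank of $F^3 M_{dR}(\Pi_f \times \sigma_f)_\R$ over $\R \otimes_\Q E$ equals $m_H+(m_H+m_W)=2m_H+m_W$.

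\textbf{Step 3 (Betti $+$ part).} Since $F_\infty$ acts as $(-1)^2=+1$ on $\Q(2)$, the ranks of $M_B(\Pi_f \times \sigma_f,2)_\R^\pm$ coincide with those of $M_B(\Pi_f \times \sigma_f)_\R^\pm$. Coming from the complex conjugation on $S(\C)\times Y(\C)$, the involution factors as $F_\infty=F_\infty^\Pi \otimes F_\infty^\sigma$ under the Künneth decomposition, and each factor swaps $(p,q)\leftrightarrow(q,p)$ pieces. Therefore:
\begin{itemize}
\item On each pair $M^{p,q}\oplus M^{q,p}$ with $p\neq q$, the $+$ eigenspace has rank equal to $\dim M^{p,q}$, contributing $m_H$ from $\{M^{4,0},M^{0,4}\}$ and $m_H+m_W$ from $\{M^{3,1},M^{1,3}\}$.
\item On $M^{2,2}=M(\Pi_f)^{2,1}\otimes M(\sigma_f)^{0,1}\;\oplus\;M(\Pi_f)^{1,2}\otimes M(\sigma_f)^{1,0}$, the involution $F_\infty^\Pi\otimes F_\infty^\sigma$ swaps the two summands (each of rank $m_W$), so the $+$ eigenspace has rank $m_W$.
\end{itemize}
Summing, the rank of $M_B(\Pi_f\times\sigma_f,2)_\R^+$ is $m_H+(m_H+m_W)+m_W=2m_H+2m_W$.

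\textbf{Step 4 (Ext).} Plug the two ranks just obtained into the short exact sequence of Prop. \ref{shortexact}. All three modules are finitely generated, so additivity of rank over the (semisimple) ring $\R\otimes_\Q E$ gives rank $(2m_H+2m_W)-(2m_H+m_W)=m_W$ for the Ext term.

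The only delicate point is the description of $F_\infty$ on the middle Hodge piece $M^{2,2}$ in Step 3: one must verify that $F_\infty$ interchanges the two Künneth summands rather than stabilizing each. This is immediate once one notes that $F_\infty^\Pi$ sends $M(\Pi_f)^{2,1}$ to $M(\Pi_f)^{1,2}$ and $F_\infty^\sigma$ sends $M(\sigma_f)^{0,1}$ to $M(\sigma_f)^{1,0}$, so the tensor product involution swaps the two summands.
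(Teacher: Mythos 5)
Your proof is correct and follows essentially the same route as the paper: reduce to computing the ranks of $F^3 M_{dR}(\Pi_f \times \sigma_f)_{\R}$ and $M_B(\Pi_f \times \sigma_f, 2)_{\R}^+$, obtain the latter from Prop.~\ref{dimension}, the Hodge decomposition (\ref{hodge-dec}), and $m(\sigma_\infty \otimes \sigma_f)=1$, and then deduce the Ext rank from the short exact sequence of Prop.~\ref{shortexact}. The only difference is that you spell out the eigenspace count under $F_\infty$ piece by piece (in particular the swap of the two K\"unneth summands of $M^{2,2}$), whereas the paper leaves this verification implicit as a "direct consequence."
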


\begin{proof} The existence of the short exact sequence of Prop. \ref{shortexact} implies that it is enough to prove
\begin{eqnarray*}
rk \, F^3 M_{dR}(\Pi_f \times \sigma_f)_{\R} &=& 2m(\Pi_\infty^H \otimes \Pi_f)+m(\Pi_\infty^W \otimes \Pi_f),\\
rk \, M_B(\Pi_f \times \sigma_f, 2)_{\R}^+ &=& 2m(\Pi_\infty^H \otimes \Pi_f)+2m(\Pi_\infty^W \otimes \Pi_f).
\end{eqnarray*}
According to \cite{shalika} Thm. 5.5, we have $m(\sigma_\infty \otimes \sigma_f)=1$. Hence both statements are direct consequences of Prop. \ref{dimension} and equality (\ref{hodge-dec}).
\end{proof}

The short exact sequence of Prop. \ref{shortexact} induces the canonical isomorphism of rank one $\R \otimes_{\Q} E$-modules
$$
\det\nolimits_{\R \otimes_{\Q} E} F^3 M_{dR}(\Pi_f \times \sigma_f)_{\R} \otimes_{\R \otimes_{\Q} E} \det\nolimits_{\R \otimes_{\Q} E} \mrm{Ext}^1_{\mrm{MHS}_{\R}^+}(\R(0), M_B(\Pi_f \times \sigma_f, 3)_{\R})$$ $$ \simeq \det\nolimits_{\R \otimes_{\Q} E} M_B(\Pi_f \times \sigma_f, 2)_{\R}^+,
$$
where we denote by $\det$ the highest exterior power. Let $\det\nolimits_E F^3 M_{dR}(\Pi_f \times \sigma_f)^\vee$ denote the $E$-module dual of $\det_E F^3 M_{dR}(\Pi_f \times \sigma_f)$. The evaluation map $$\det\nolimits_E F^3 M_{dR}(\Pi_f \times \sigma_f) \otimes_E \det\nolimits_E F^3 M_{dR}(\Pi_f \times \sigma_f)^\vee \rightarrow E$$ is an isomorphism. The following definition is taken from \cite{den-scholl} (2.3.2).

\begin{defn} \label{beilinson-qs} The Beilinson $E$-structure on $\det_{\R \otimes_{\Q} E} \mrm{Ext}^1_{\mrm{MHS}_{\R}^+}(\R(0), M_B(\Pi_f \times \sigma_f, 3)_{\R})$ is defined as 
$$
\mathcal{B}(\Pi_f \times \sigma_f)=\det\nolimits_E F^3 M_{dR}(\Pi_f \times \sigma_f)^\vee \otimes_E \det\nolimits_E M_B(\Pi_f \times \sigma_f, 2)^+.
$$ 
\end{defn}

Let us denote by $\delta(\Pi_f \times \sigma_f, 3)$ the determinant of the comparison isomorphism 
$$
M_B(\Pi_f \times \sigma_f, 3)_{\C} \rightarrow M_{dR}(\Pi_f \times \sigma_f, 3)_{\C}
$$
computed in basis defined on $E$ on both sides. Then $\delta(\Pi_f \times \sigma_f, 3)$ is an element of $(\C \otimes E)^\times$, which, as $\dim_E M_B(\Pi_f \times \sigma_f)^-$ is even, belongs to $(\R \otimes_{\Q} E)^\times$ (see \cite{valeurs-deligne} p. 320) and which is independent of the choice of the basis up to right multiplication by an element of $E^\times$. 

\begin{defn} \label{deligne-qs} The Deligne $E$-structure on $\det_{\R \otimes_{\Q} E} \mrm{Ext}^1_{\mrm{MHS}_{\R}^+}(\R(0), M_B(\Pi_f \times \sigma_f, 3)_{\R})$ is defined as 
\begin{eqnarray*}
\mathcal{D}(\Pi_f \times \sigma_f) &=& (2\pi i)^{\dim_{E}M_B(\Pi_f \times \sigma_f, 3)^-} \delta(\Pi_f \times \sigma_f, 3)^{-1} \mathcal{B}(\Pi_f \times \sigma_f).
\end{eqnarray*}
\end{defn}

\subsubsection{The cycle class} \label{section-cycle} The reductive group $G$ contains 
$$
H=\mathrm{GL}(2) \times_{\mathbb{G}_m} \mathrm{GL}(2)=\{(g, h) \in \mrm{GL}(2) \times \mrm{GL}(2) \,|\, \det(g)=\det(h) \}
$$
as a closed subgroup via the embedding $\iota: H \rightarrow G$ defined by
$$
\iota\left( \begin{pmatrix}
a & b\\
c & d\\
\end{pmatrix}, \begin{pmatrix}
a' & b'\\
c' & d'\\
\end{pmatrix} \right) =
\left( \begin{pmatrix}
a & & b & \\
 & a' &  & b'\\
c &  & d & \\
 & c' &  & d'\\
\end{pmatrix},  \begin{pmatrix}
a' & b'\\
c' & d'\\
\end{pmatrix} \right).
$$
For any integer $N \geq 3$, this group homomorphism induces the closed embedding $$
\begin{CD}
Y(N) \times_{\Q(\mu_N)} Y(N) @>\iota_N >> S(N) \times Y(N)
\end{CD}$$
between the corresponding Shimura varieties. Let us fix an integer $N$ and let 
\begin{equation} \label{ZN}
\mathcal{Z}_N \in H^4_B(S(N) \times Y(N), \Q(2))
\end{equation}
be the cohomology class of the image of $\iota_N$, which we regard as an element of the $E[G(\A_f)]$-module $H^4_{B}(S \times Y, \Q(2)) \otimes_{\Q} E=\underrightarrow{\lim}_M H^4_{B}(S(M) \times Y(M), \Q(2)) \otimes_{\Q} E$. Let 
$$
\widetilde{\mathcal{Z}}(\Pi_f \times \sigma_f)=\mrm{Hom}_{E[G(\A_f)]}(\Pi_f \times \sigma_f, E[G(\A_f)]\mathcal{Z}_N).
$$
This is a sub-$E$-vector space of $\mrm{Hom}_{E[G(\A_f)]}(\Pi_f \times \sigma_f, H^4_{B}(S \times Y, \Q(2)) \otimes_{\Q} E)$. As we assume that $\Pi$ is globally generic, the latter coincides with $M_B(\Pi_f \times \sigma_f, 2)$ (see Prop. \ref{hyp-CAP}). Note that as the cycle $Y(N) \times_{\Q(\mu_N)} Y(N)$ is defined over $\Q$, we have 
$
\widetilde{\mathcal{Z}}(\Pi_f \times \sigma_f) \subset M_B(\Pi_f \times \sigma_f, 2)^+.
$
We shall denote again by 
$$
{\mathcal{Z}}(\Pi_f \times \sigma_f) \subset \mrm{Ext}^1_{\mrm{MHS}_{\R}^+}(\R(0), M_B(\Pi_f \times \sigma_f, 3)_{\R})
$$
the sub-$E$-vector space defined as the image of $\widetilde{\mathcal{Z}}(\Pi_f \times \sigma_f)$ by the natural map
$$
M_B(\Pi_f \times \sigma_f, 2)^+ \rightarrow M_B(\Pi_f \times \sigma_f, 2)^+_{\R} \rightarrow \mrm{Ext}^1_{\mrm{MHS}_{\R}^+}(\R(0), M_B(\Pi_f \times \sigma_f, 3)_{\R}).
$$
where the first map is the canonical inclusion and the second is the third map in the short exact sequence of Prop. \ref{shortexact}. 

\subsection{Computation of Poincar\'e duality pairings} \label{pduality} From now on, we fix two irreducible cuspidal automorphic representations $\Pi=\Pi_\infty \otimes \Pi_f$ and $\sigma=\sigma_\infty \otimes \sigma_f$ of $\mrm{GSp}(4, \A_f)$ and $\mrm{GL}(2, \A_f)$ respectively. We assume that $\Pi_\infty|_{\mrm{GSp}(4, \R)_+} \in P_4$, that $\sigma_\infty|_{\mrm{GL}(2, \R)_+} \in P_2$ and that $\Pi$ is globally generic. This implies that $\Pi_\infty \simeq \Pi_\infty^W$. The following result will be useful later.

\begin{pro} \label{js} \cite{jiang-soudry} $$m(\Pi_\infty^W \otimes \Pi_f)=1.$$
\end{pro}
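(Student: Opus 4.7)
The plan is to reduce the statement to the strong multiplicity one theorem for globally generic cuspidal representations of $\mathrm{GSp}(4, \A)$ established by Jiang--Soudry, and verify that the representation $\Pi_\infty^W \otimes \Pi_f$ satisfies the genericity hypothesis required to invoke it.

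First I would observe that by assumption $\Pi = \Pi_\infty \otimes \Pi_f$ is globally generic, so in particular its finite part $\Pi_f$ is abstractly generic at every non-archimedean place. On the other hand, $\Pi_\infty^W$ is by construction the induction from $\mathrm{GSp}(4, \R)_+$ of the generic discrete series $\Pi_\infty^{2,1}$ (or equivalently $\Pi_\infty^{1,2}$), which implies that $\Pi_\infty^W$ is itself generic in the sense of admitting a (continuous) Whittaker functional. Consequently, any cuspidal realization of $\Pi_\infty^W \otimes \Pi_f$ is again globally generic: its local Whittaker functionals exist at every place, and by the non-vanishing of the Whittaker functional at infinity for $\Pi_\infty^W$ combined with the non-vanishing of the global Whittaker period for $\Pi_f$ inherited from $\Pi$, the global Whittaker integral cannot vanish identically on such a realization.

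Next I would invoke the main theorem of Jiang--Soudry (which, via the descent construction, transfers the strong multiplicity one theorem from $\mathrm{GL}(4)$ back to $\mathrm{GSp}(4)$ in the generic case): any irreducible globally generic cuspidal automorphic representation of $\mathrm{GSp}(4, \A)$ occurs in the cuspidal spectrum with multiplicity exactly one. Applying this to $\Pi_\infty^W \otimes \Pi_f$ yields $m(\Pi_\infty^W \otimes \Pi_f) \leq 1$.

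Finally, to obtain the lower bound I would note that $\Pi$ itself is cuspidal and globally generic with $\Pi_\infty|_{\mathrm{GSp}(4,\R)_+} \in P_4$; combined with the assumption $\Pi_\infty \simeq \Pi_\infty^W$ recorded just before the proposition, this gives $m(\Pi_\infty^W \otimes \Pi_f) \geq 1$. The main subtlety lies not in any calculation but in checking that the hypotheses of the Jiang--Soudry theorem are met after extending the archimedean component from $\Pi_\infty^{2,1}$ to its induction $\Pi_\infty^W$; once this is settled, the conclusion is immediate.
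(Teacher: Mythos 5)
The paper gives no argument for Proposition~\ref{js} beyond the citation to Jiang--Soudry, and you have correctly identified that the relevant input is their multiplicity one theorem for globally generic cuspidal automorphic representations of $\mathrm{GSp}(4,\A)$, together with the observation (made in the paper just before the proposition) that the standing hypotheses force $\Pi_\infty \simeq \Pi_\infty^W$, so $\Pi_\infty^W \otimes \Pi_f \simeq \Pi$ has a globally generic cuspidal realization. That is exactly the paper's route.

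However, one intermediate claim in your sketch is both unnecessary and incorrectly justified: the assertion that \emph{any} cuspidal realization of $\Pi_\infty^W \otimes \Pi_f$ is automatically globally generic ``because local Whittaker functionals exist at every place.'' Local genericity at every place does not imply non-vanishing of the global Whittaker period on a given cuspidal subspace; deciding whether a locally generic cuspidal representation is globally generic is a genuinely global problem, and the possibility that it fails is precisely what makes multiplicity results in this setting non-trivial. Moreover, the phrase ``the global Whittaker period for $\Pi_f$ inherited from $\Pi$'' does not make sense for a hypothetical different realization: the Whittaker period is attached to the cuspidal subspace, not to the abstract finite part. Fortunately, you do not need this claim. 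The content of Jiang--Soudry's theorem is that once one cuspidal realization is globally generic, the abstract representation occurs with total cuspidal multiplicity one; applying this to $\Pi$ itself (globally generic by hypothesis, and isomorphic to $\Pi_\infty^W \otimes \Pi_f$) directly gives $m(\Pi_\infty^W \otimes \Pi_f) = 1$, with no need to analyze other putative realizations beforehand.
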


\subsubsection{Cohomological interpretation of a period integral} \label{coho-per} The previous result implies that the $\C \otimes E$-module $M(\Pi_f)^{2,1}$ has rank one. Let us explain how to attach a generator of this module to certain cusp forms in the representation space of $\Pi$. We will freely use some standard results and notations from the section 3.1 of \cite{lemma2}. In particular, if $(k, k') \in \Z^2$ is a pair of integers such that $k \geq k'$, let $\tau_{(k, k')}$ denote the irreducible $\C[K_\infty]$-module of highest weight $(k, k')$ with the conventions of loc. cit. Then, the generic member $\Pi_\infty^{2,1}$ of the discrete series $L$-packet $P_4$ contains with multiplicity one $\tau_{(3, -1)}$ as a minimal $K_\infty$-type (see loc. cit. Prop. 3.1). Furthermore, we have the Cartan decomposition $\mathfrak{gsp}_{4}=\mathfrak{k} \oplus \mathfrak{p}^+ \oplus \mathfrak{p}^-$ where
$$
\mathfrak{p}^\pm = \left\{ \begin{pmatrix}
Z & \pm i Z \\
\pm i Z & -Z\\
\end{pmatrix}, Z^t=Z  \in  \mathfrak{gl}_{2} \right\}.
$$
For each symmetric matrix $Z \in \mathfrak{gl}_{2}$, define the element $p_\pm(Z)$ of $\mathfrak{gsp}_{4}$ by
$$
p_\pm(Z)=\begin{pmatrix}
Z & \pm iZ\\
\pm iZ & -Z\\
\end{pmatrix}.
$$
Let $X_{(\alpha_1, \alpha_2)} \in \mathfrak{gsp}_{4}$ be defined as
$$
X_{\pm(2, 0)}=p_\pm \left(  \begin{pmatrix}
1 & \\
 &  \\
\end{pmatrix}\right), X_{\pm(1, 1)}=p_\pm \left(  \begin{pmatrix}
 & 1\\
1 &  \\
\end{pmatrix}\right), X_{\pm(0, 2)}=p_\pm \left(  \begin{pmatrix}
 & \\
 &  1\\
\end{pmatrix}\right).
$$
It follows from an easy computation that $X_{(\alpha_1, \alpha_2)}$ is a root vector corresponding to the root $(\alpha_1, \alpha_2)$ with the conventions of loc. cit. Recall from Prop. \ref{dimension} that
$$
H^3\left( \mathfrak{gsp}_{4}, K_\infty, \Pi_\infty^{2,1} \right) =\Hom_{K_\infty}\left( \bigwedge^3  \mathfrak{gsp}_{4}/\mathfrak{k}, \Pi_\infty^{2,1} \right).
$$

\begin{lem} \label{diff-form} Let $\Psi_\infty \in \Pi_\infty^{2,1}$ be a highest weight vector of the minimal $K_\infty$-type. Then, there exists a unique element $\Omega_{\Psi_\infty} \in H^3(\mathfrak{gsp}_{4}, K_\infty, \Pi_\infty^{2,1})$ such that
$$
\Omega_{\Psi_\infty}(X_{(2,0)} \wedge X_{(1,1)} \otimes X_{(0, -2)})=\Psi_\infty.
$$
\end{lem}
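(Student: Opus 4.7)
The plan is to exhibit $v := X_{(2,0)} \wedge X_{(1,1)} \otimes X_{(0,-2)}$ as a highest weight vector of $K_\infty$-type $\tau_{(3,-1)}$ inside $\bigwedge^3 \mathfrak{gsp}_{4}/\mathfrak{k}$, and then to match this copy of $\tau_{(3,-1)}$ with the minimal $K_\infty$-type of $\Pi_\infty^{2,1}$.

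Using the Cartan decomposition to write $\bigwedge^3 \mathfrak{gsp}_{4}/\mathfrak{k} = \bigoplus_{p+q=3} \bigwedge^p \mathfrak{p}^+ \otimes \bigwedge^q \mathfrak{p}^-$, the vector $v$ lies in the summand $\bigwedge^2 \mathfrak{p}^+ \otimes \mathfrak{p}^-$ and has weight $(2,0)+(1,1)+(0,-2) = (3,-1)$ under the maximal torus of $K_\infty$. To see that $v$ is annihilated by the raising operator $E$ of $\mathfrak{k}$, one checks that $E \cdot X_{(2,0)} = 0$ and $E \cdot X_{(0,-2)} = 0$, since there are no weights one simple-root higher in $\mathfrak{p}^+$ and $\mathfrak{p}^-$ respectively, while $E \cdot X_{(1,1)}$ is proportional to $X_{(2,0)}$. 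The Leibniz rule then gives $E \cdot v$ proportional to $X_{(2,0)} \wedge X_{(2,0)} \otimes X_{(0,-2)} = 0$, so $v$ generates a $K_\infty$-submodule isomorphic to $\tau_{(3,-1)}$.

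Since $K_\infty$ modulo its center is compact, I can split $\bigwedge^3 \mathfrak{gsp}_{4}/\mathfrak{k} = (K_\infty \cdot v) \oplus C$ as $K_\infty$-modules. Because $\tau_{(3,-1)}$ appears with multiplicity one in $\Pi_\infty^{2,1}$ as its minimal $K_\infty$-type and $\Psi_\infty$ is its highest weight vector, Schur's lemma produces a unique $K_\infty$-equivariant map $K_\infty \cdot v \to \Pi_\infty^{2,1}$ sending $v$ to $\Psi_\infty$; extending by zero along $C$ yields an element $\Omega_{\Psi_\infty} \in \Hom_{K_\infty}(\bigwedge^3 \mathfrak{gsp}_{4}/\mathfrak{k}, \Pi_\infty^{2,1})$ with the required normalization. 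Uniqueness is immediate from Proposition \ref{dimension}, which says the Hom space is one-dimensional, so any other candidate is a scalar multiple of $\Omega_{\Psi_\infty}$, and the scalar is pinned down to one by evaluation at $v$.

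The only real work is the commutator verification that $v$ is a highest weight vector; the rest is formal bookkeeping with the multiplicity-one statements for minimal $K_\infty$-types and for $H^3(\mathfrak{gsp}_{4}, K_\infty, \Pi_\infty^{2,1})$.
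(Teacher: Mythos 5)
Your proof is correct and takes essentially the same route as the paper: both identify $X_{(2,0)} \wedge X_{(1,1)} \otimes X_{(0,-2)}$ as a highest weight vector of $K_\infty$-type $\tau_{(3,-1)}$ inside $\bigwedge^2 \mathfrak{p}^+ \otimes \mathfrak{p}^-$ and then invoke the one-dimensionality of $H^3(\mathfrak{gsp}_{4}, K_\infty, \Pi_\infty^{2,1})$ from Proposition~\ref{dimension} for uniqueness. The only minor stylistic difference is that you verify the highest-weight property by direct annihilation under the raising operator together with the Leibniz rule, whereas the paper reads it off the explicit decomposition $\bigwedge^2 \mathfrak{p}^+ \otimes \mathfrak{p}^- = \tau_{(3,-1)} \oplus \tau_{(2,0)} \oplus \tau_{(1,1)}$; either computation yields the same conclusion.
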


\begin{proof} We have 
$$
\bigwedge^3  \mathfrak{gsp}_{4}/\mathfrak{k}= \bigoplus_{p+q=3} \bigwedge^p \mathfrak{p}^+ \otimes \bigwedge^q \mathfrak{p}^-.
$$
and by a weight computation, we find  
$$
\bigwedge^2 \mathfrak{p}^+ \otimes_\mbb{C} \mathfrak{p}^- = \tau_{(3, -1)} \oplus \tau_{(2, 0)} \oplus \tau_{(1, 1)}\\
$$
as $\C[K_\infty]$-modules. As a consequence, the exsitence of $\Omega_{\Psi_\infty}$ follows from the fact that the vector $X_{(2,0)} \wedge X_{(1,1)} \otimes X_{(0, -2)}$, as $\Psi_\infty$, is a vector of highest weight $(3, -1)$. Its unicity follows from the fact that, according to Prop. \ref{dimension}, the $\C$-vector space $H^3(\mathfrak{gsp}_{4}, K_\infty, \Pi_\infty^{2,1})$ has dimension one.
\end{proof}

Let $\Psi=\Psi_\infty \otimes \Psi_f$ be a cusp form in the space of $\Pi=\Pi_\infty^W \otimes \Pi_f$. Note that we have an equivalence $\Pi_\infty^W|_{\mrm{GSp}(4, \R)_+} \simeq \Pi_\infty^{2,1} \oplus \Pi_\infty^{1,2}$. Assume that $\Psi_\infty$ is a highest weight vector of the minimal $K_\infty$-type of $\Pi_\infty^{2,1}$ and such that $\Psi_f$ is invariant by the principal level $N$ congruence subgroup of $\mrm{GSp}(4, \widehat{\Z})$. Define
\begin{equation} \label{embedding}
\Omega_\Psi=(\Omega_{\Psi_\infty})_{\sigma: E \rightarrow \C} \otimes \Psi_f \in H^3_{B,!}(S, \C) \otimes_{\Q} E.
\end{equation}
Let 
$$
\omega_{\Psi} \in M_B(\Pi_f)_{\C}=\Hom_{E[\mrm{GSp}(4, \A_f)]}(\Pi_f, H^3_{B,!}(S, \C) \otimes_{\Q} E)
$$ be the unique element sending $\Psi_f$ to $\Omega_\Psi$. Then $\omega_{\Psi} \in M^{2,1}(\Pi_f)$. Let $\overline{\Psi}_\infty \in \Pi_\infty^{1,2}$ be defined as in \cite{lemma2} Rem. 3.2. It follows from loc. cit., Prop 3.13 that $
\overline{\omega}_{\Psi} \in M^{1, 2}(\Pi_f)$
correponds to the element
$$
\Omega_{\overline{\Psi}_\infty} \in H^3 \left( \mathfrak{gsp}_{4}, K_\infty, \Pi_\infty^{1,2} \right) =\Hom_{K_\infty}\left( \bigwedge^3  \mathfrak{gsp}_{4}/\mathfrak{k}, \Pi_\infty^{1,2} \right)
$$
characterized by the identity
$$
\Omega_{\overline{\Psi}_\infty}(X_{(0,-2)} \wedge X_{(-1,-1)} \otimes X_{(2, 0)})=\overline{\Psi}_\infty
$$
in the same way as $\omega_{\Psi}$ correponds to $\Omega_{\Psi_\infty}$. Let us denote by $\overline{\Psi}$ the cusp form $\overline{\Psi}_\infty \otimes \Psi_f$ which belongs to the representation space of $\Pi_\infty^W \otimes \Pi_f$.\\

We have the Cartan decomposition $\mathfrak{gl}_{2}=\mathfrak{l} \oplus \mathfrak{p}'^+ \oplus \mathfrak{p}'^-$ where
$$
\mathfrak{p}'^\pm= \left\{ \begin{pmatrix}
z & \pm i z \\
\pm i z & -z\\
\end{pmatrix}, z \in  \mathfrak{gl}_{1} \right\}.
$$
Let $$v^\pm= \begin{pmatrix}
1 & \pm i  \\
\pm i  & -1\\
\end{pmatrix} \in \mathfrak{p}'^\pm.$$ Let $\Phi=\Phi_\infty \otimes \Phi_f$ be a cusp form in the space of $\sigma=\sigma_\infty \otimes \sigma_f$ such that $\Phi_\infty$ is a generator of the minimal $L_\infty$-type of $\sigma_\infty$ and such that $\Phi_f$ is invariant by the principal level $N$ congruence subgroup of $\mrm{GL}(2, \widehat{\Z})$. Then, as in Lem. \ref{diff-form}, we define the element of $H^1\left(\mathfrak{gl}_2, L_\infty, \sigma_\infty \right)=\Hom_{L_\infty} \left( \mathfrak{gl}_{2} / \mathfrak{l}, \sigma_\infty \right)$ by prescribing that it sends $v^+$ to $\Phi_\infty$. As above, we associate to $\Phi$ the harmonic differential form
$$
\Omega_\Phi=(\Omega_{\Phi_\infty})_{\sigma: E \rightarrow \C} \otimes \Phi_f \in H^1_{B,!}(Y, \C) \otimes_{\Q} E
$$
which gives rise to a generator $\eta_\Phi$ of $M^{1,0}(\sigma_f)$.\\

Let us introduce the Poincar\'e duality pairing
\begin{equation} \label{pdp}
H^4_{B,!}(S \times Y, \Q) \otimes_{\Q} H^4_{B,!}(S \times Y, \Q) \rightarrow \Q(-4).
\end{equation}
This becomes a perfect pairing of pure Hodge structures when restricted to invariants by any neat compact open subgroup of $G(\A_f)$. Furthermore this is a $G(\A_f)$-equivariant map when $\Q(-4)$ is endowed with the trivial action. This statement is easily deduced from the K\"unneth formula and from the analogous statements for $S$, see \cite{taylor} p. 295, and for $Y$, which are similar to loc. cit. Then (\ref{pdp}) induces a perfect pairing 
$$
\begin{CD}
M_B(\Pi_f \times \sigma_f) \otimes_{E} M_B(\check{\Pi}_f \times \check{\sigma}_f) @>\langle\,\,,\,\, \rangle_B>> E(-4)_B
\end{CD}
$$
where $\check{\Pi}_f$ and $\check{\sigma}_f$ denote the representations contragredient to $\Pi_f$ and $\sigma_f$, respectively, and where $E(-4)_B$ denotes the Betti realization of the $(-4)$-th power of the Tate motive with coefficients in $E$. Furthermore  $
\langle\,\,,\,\, \rangle_B$ is a morphism of Hodge structure and has a de Rham analogue
$$
\begin{CD}
M_{dR}(\Pi_f \times \sigma_f) \otimes_E M_{dR}(\check{\Pi}_f \times \check{\sigma}_f) @>\langle\,\,,\,\, \rangle_{dR}>> E(-4)_{dR}.
\end{CD}
$$
We assume that the central characters $\omega_{\Pi}$ and $\omega_{\sigma}$ of $\Pi$ and $\sigma$ are trivial.  Hence, because of the canonical isomorphisms $\check{\Pi}_f \simeq \Pi_f \otimes (\omega_{\Pi_f}\circ \nu)^{-1}$ (\cite{weissauer1} Lem. 1.1) and $\check{\sigma}_f \simeq \sigma_f \otimes (\omega_{\sigma_f} \circ \det)^{-1}$  (\cite{jacquet-langlands} Thm. 2.18 (i)), the pairings $
\langle\,\,,\,\, \rangle_B$ and $
\langle\,\,,\,\, \rangle_{dR}$ can be regarded as a pairings
$$
\begin{CD}
M_B(\Pi_f \times \sigma_f) \otimes_E M_B({\Pi}_f \times {\sigma}_f) @>\langle\,\,,\,\, \rangle_B>> E(-4)_B\\
M_{dR}(\Pi_f \times \sigma_f) \otimes_E M_{dR}({\Pi}_f \times {\sigma}_f) @>\langle\,\,,\,\, \rangle_{dR}>> E(-4)_{dR}
\end{CD}
$$
whose complexifications are part of the commutative diagram 
\begin{equation} \label{vertical}
\begin{CD}
M_B(\Pi_f \times \sigma_f)_{\C} \otimes_{\C \otimes_{\Q} E} M_B({\Pi}_f \times {\sigma}_f)_{\C} @>\langle\,\,,\,\, \rangle_{B, \C}>> \C \otimes_{\Q} E(-4)_B\\
@VVV                                                                                                                                 @VVV\\
M_{dR}(\Pi_f \times \sigma_f)_{\C} \otimes_{\C \otimes_{\Q} E} M_{dR}({\Pi}_f \times {\sigma}_f)_{\C} @>\langle\,\,,\,\, \rangle_{dR, \C}>> \C \otimes_{\Q} E(-4)_{dR}\\
\end{CD}
\end{equation}
where the vertical lines are the comparison isomorphisms.

\begin{lem} \label{nullite} The pairing
$$
\begin{CD}
M_B(\Pi_f \times \sigma_f, 2)_{\R}^+ @>\langle \,\,, \overline{\omega}_\Psi \otimes \eta_\Phi \rangle_{B, \C}>> \C \otimes_{\Q} E(-2)_B
\end{CD}
$$
with $\overline{\omega} \otimes \eta$, induces an $\R \otimes_{\Q} E$-linear map
$$
\begin{CD}
\mrm{Ext}^1_{\mrm{MHS}_{\R}^+}(\R(0), M_B(\Pi_f \times \sigma_f, 3)_{\R}) @>\langle \,\,, \overline{\omega}_\Psi \otimes \eta_\Phi \rangle_{B, \C}>> \C \otimes_{\Q} E(-2)_B.
\end{CD}
$$
\end{lem}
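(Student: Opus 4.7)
The plan is to invoke the short exact sequence of Prop.~\ref{shortexact} and verify that the pairing $\langle \cdot, \overline{\omega}_\Psi \otimes \eta_\Phi \rangle_{B,\C}$ kills the image of $F^3 M_{dR}(\Pi_f \times \sigma_f)_\R$ inside $M_B(\Pi_f \times \sigma_f, 2)_\R^+$. Once this vanishing is established, the universal property of the quotient yields the desired $\R \otimes_{\Q} E$-linear factorization through $\mrm{Ext}^1_{\mrm{MHS}_\R^+}(\R(0), M_B(\Pi_f \times \sigma_f, 3)_\R)$.

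The vanishing reduces to tracking Hodge types. By construction $\overline{\omega}_\Psi \in M(\Pi_f)^{1,2}$ and $\eta_\Phi \in M(\sigma_f)^{1,0}$, so the K\"unneth decomposition places $\overline{\omega}_\Psi \otimes \eta_\Phi$ in $M^{2,2}$. The Poincar\'e pairing is a morphism of Hodge structures with target $E(-4)_B$, pure of Hodge type $(4,4)$, so $\langle M^{p,q}, M^{p',q'} \rangle_B = 0$ unless $(p+p',q+q') = (4,4)$; in particular, pairing with $\overline{\omega}_\Psi \otimes \eta_\Phi$ annihilates every Hodge summand except $M^{2,2}$. It thus suffices to check that the image of $F^3 M_{dR}(\Pi_f \times \sigma_f)_\R$ in $M_B(\Pi_f \times \sigma_f)_\C$, ignoring the $(2\pi i)^2$ scaling, has no $M^{2,2}$-component. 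For $\alpha \in F^3 M_{dR}(\Pi_f \times \sigma_f)_\R$, the comparison isomorphism gives $I_\infty^{-1}(\alpha) \in F^3_{\C} = M^{4,0} \oplus M^{3,1}$, and its projection onto $M_B(\Pi_f \times \sigma_f)_\R^+$ is $\tfrac{1}{2}(\id + F_\infty) I_\infty^{-1}(\alpha)$. Since $F_\infty$ is induced by complex conjugation on $S(\C) \times Y(\C)$ and thus interchanges $M^{p,q}$ with $M^{q,p}$ on complexifications, this image lies in $M^{4,0} \oplus M^{3,1} \oplus M^{1,3} \oplus M^{0,4}$, which has no $M^{2,2}$-component.

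The calculation is essentially formal once the Hodge types of $\overline{\omega}_\Psi$, $\eta_\Phi$ and of $F^3 M_{dR}$ are read off from the definitions in \S\ref{coho-per} and \S\ref{cohshvar}. The only bookkeeping point concerns the Tate twist: on $M_B(\Pi_f \times \sigma_f, 2)$ the Hodge types are shifted by $(-2,-2)$ and the target of the pairing becomes $E(-2)_B$, pure of type $(2,2)$, which produces the $(2\pi i)^2$ scalar but does not disturb the Hodge-type obstruction. No deeper difficulty is anticipated.
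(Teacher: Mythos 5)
Your proof is correct and follows essentially the same route as the paper: you locate $\overline{\omega}_\Psi \otimes \eta_\Phi$ in the $M^{2,2}$-summand, observe that the image of $F^3 M_{dR}(\Pi_f \times \sigma_f)_{\R}$ avoids $M^{2,2}$, and conclude by the fact that Poincar\'e duality is a morphism of Hodge structures. The extra detail you supply (expressing the projection via $\tfrac{1}{2}(\id+F_\infty)$ and noting that $F_\infty$ interchanges $M^{p,q}$ and $M^{q,p}$) is a correct unpacking of the step the paper leaves implicit.
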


\begin{proof} In the Hodge decomposition (\ref{hodge-dec}), we have $\overline{\omega}_\Psi \otimes \eta_\Phi \in M^{2, 2}$. Moreover the image of the natural map $F^3 M_{dR}(\Pi_f \times \sigma_f)_{\R} \rightarrow M_B(\Pi_f \times \sigma_f, 2)_{\R}^+$ (see Prop. \ref{shortexact}) lies in $M^{4, 0} \oplus M^{3, 1} \oplus M^{1, 3} \oplus M^{0, 4}$. As the Poincar\'e duality pairing is a morphism  of Hodge structures, the restriction of
$$
\begin{CD}
M_B(\Pi_f \times \sigma_f, 2)_{\R}^+ @>\langle \,\,, \overline{\omega}_\Psi \otimes \eta_\Phi \rangle_{B, \C}>> \C \otimes_{\Q} E(-2)_B
\end{CD}
$$
to $F^3 M_{dR}(\Pi_f \times \sigma_f)_{\R}$ is zero. Hence we get the induced map 
$$
\begin{CD}
\mrm{Ext}^1_{\mrm{MHS}_{\R}^+}(\R(0), M_B(\Pi_f \times \sigma_f, 3)_{\R}) @>\langle \,\,, \overline{\omega}_\Psi \otimes \eta_\Phi \rangle_{B, \C}>> \C \otimes_{\Q} E(-2)_B.
\end{CD}
$$
\end{proof}

Recall that we have fixed vectors $\Psi_f$ and $\Phi_f$ in the representation space of $\Pi_f$ and $\sigma_f$ respectively. Hence, we can consider the image $v_\mathcal{Z}$ of the cycle class $\mathcal{Z}_N$ (\ref{ZN}) by the maps
$$
H^4_B(S(N) \times Y(N), \Q(2))^+ \rightarrow H^4_B(S \times Y, \Q(2))^+ \otimes E \rightarrow M_B(\Pi_f \times \sigma_f, 2)^+ 
$$
where the second map sends a vector $x \in H^4_B(S \times Y, \Q(2))^+ \otimes E$ to the unique element of $M_B(\Pi_f \times \sigma_f, 2)^+$ sending $\Psi_f \times \sigma_f$ to $x$. It is easy to see that the vector $v_\mathcal{Z}$ is mapped to the $E$-subspace
$
\mathcal{Z}(\Pi_f \times \sigma_f)$
by the right hand map 
$$
M_B(\Pi_f \times \sigma_f, 2)^+ \rightarrow \mrm{Ext}^1_{\mrm{MHS}_{\R}^+}(\R(0), M_B(\Pi_f \times \sigma_f, 3)_{\R})
$$
of the short exact sequence of Prop. \ref{shortexact}.\\

We now define a Haar measure on $H(\A)/\R^\times_+$ as follows. For every prime number $p$, we endow $H(\Q_p)$ with the unique Haar measure $dh_p$ for which $H(\Z_p)$ has volume one. Let us consider 
$$
U_\infty=(\R^\times_+\mrm{SO}(2, \R) \times \R^\times_+\mrm{SO}(2, \R))\cap H(\R).
$$
Then $U_\infty$ is a subgroup of $H(\R)_+=\{(h_1, h_2) \in H(\R)\,|\, \det h_1 = \det h_2 >0\}$ which is maximal compact modulo the center. Let $\mathfrak{h}$ and $\mathfrak{u}$ be the complex Lie algebras of $H(\R)$ and $U_\infty$ respectively. Then
$$
\textbf{1}=(v^+, 0) \wedge (0, v^+) \wedge (v^-, 0) \wedge (0, v^-)
$$ 
is a generator of the highest exterior power $\bigwedge^4 \mathfrak{h}/\mathfrak{u}$ and it determines a left invariant measure $dx$ on $H(\R)/U_\infty$. Together with the Haar measure $dk$ on $\mrm{SO}(2, \R) \times \mrm{SO}(2, \R)$ whose total mass is one, this defines a measure $dh_\infty=dx dk$ on $H(\R)/\R^\times_+$. Let $dh$ denote the product measure $\prod_{v \leq \infty} dh_v$ on $H(\A)/\R^\times_+$. 

\begin{defn} For two elements $x, y \in \C \otimes_{\Q} E$, we write $x \sim y$ if $x$ and $y$ belong to the same orbit under right multiplication by $E^\times$.
\end{defn}

Let $Z$ denote the center of $H$. In the following proposition, we regard a complex number as an element of $\C \otimes_{\Q} E=\prod_{\sigma: E \rightarrow \C} \C$ via the diagonal inclusion $\C \rightarrow \prod_{\sigma: E \rightarrow \C} \C$. 

\begin{pro} \label{coho-period} $$
\langle v_\mathcal{Z}, \overline{\omega}_\Psi \otimes \eta_\Phi \rangle_{B, \C} \sim  \int_{Z(\A) H(\Q) \backslash H(\A)} (X_{(-1, 1)} \overline{\Psi})(h_1, h_2) \Phi(h_2) dh
$$
\end{pro}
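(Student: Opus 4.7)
The plan is to reduce the pairing to an integral of a pullback of the form over the $H$-cycle, and then to carry out the resulting archimedean Lie-algebra computation.

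\textbf{Step 1 (geometric reduction).} Since $v_\mathcal{Z}$ is the cohomology class of the image of $\iota_N\colon Y(N)\times_{\Q(\mu_N)}Y(N) \to S(N)\times Y(N)$, Poincar\'e duality together with the projection formula for the proper embedding $\iota_N$ yields, up to a constant in $E^\times$ absorbing the level volume, adelic normalizations and the Tate-twist factor,
\[
\langle v_\mathcal{Z},\overline{\omega}_\Psi \otimes \eta_\Phi\rangle_{B,\C} \,\sim\, \int_{Z(\A)H(\Q)\backslash H(\A)} \iota^{*}(\overline{\omega}_\Psi \otimes \eta_\Phi)(h)\,dh,
\]
where the integrand is the pullback $4$-form evaluated on the fixed generator $\mathbf{1}$ of $\bigwedge^{4}(\mathfrak{h}/\mathfrak{u})$, producing an automorphic function on $Z(\A)H(\Q)\backslash H(\A)$.

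\textbf{Step 2 (Lie-algebra pullback).} Using the realizations of section~\ref{coho-per}, $\overline{\omega}_\Psi\otimes\eta_\Phi$ corresponds to the cocycle $\Omega_{\overline{\Psi}_\infty}\otimes\Omega_{\Phi_\infty}$ coupled with $\overline{\Psi}_f\otimes\Phi_f$, and $\iota^{*}$ is computed at the archimedean place by evaluation on $d\iota(\mathbf{1})\in\bigwedge^{4}(\mathfrak{gsp}_{4}/\mathfrak{k}\oplus\mathfrak{gl}_{2}/\mathfrak{l})$. A direct matrix computation gives $d\iota(v^{\pm},0)=(X_{\pm(2,0)},0)$ and $d\iota(0,v^{\pm})=(X_{\pm(0,2)},v^{\pm})$. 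Expanding the $4$-wedge and projecting onto the K\"unneth summand $\bigwedge^{3}(\mathfrak{gsp}_{4}/\mathfrak{k})\otimes\mathfrak{gl}_{2}/\mathfrak{l}$ gives two terms, distinguished by $v^{+}$ versus $v^{-}$ in the $\mathfrak{gl}_{2}$-slot. The $v^{-}$-term has $\bigwedge^{3}(\mathfrak{gsp}_{4}/\mathfrak{k})$-factor in $\bigwedge^{2}\mathfrak{p}^{+}\otimes\mathfrak{p}^{-}$, hence of Hodge type $(2,1)$; but $\Omega_{\overline{\Psi}_\infty}$ takes values in the minimal $K_\infty$-type $\tau_{(1,-3)}$ of $\Pi_\infty^{1,2}$, which by a weight argument sits entirely in the $\mathfrak{p}^{+}\otimes\bigwedge^{2}\mathfrak{p}^{-}$ summand (Hodge type $(1,2)$), so this term is killed. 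Only the $v^{+}$-term survives, equal up to a sign to $(X_{(2,0)}\wedge X_{(-2,0)}\wedge X_{(0,-2)})\otimes v^{+}$.

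\textbf{Step 3 (archimedean identification).} It remains to identify $\Omega_{\overline{\Psi}_\infty}\bigl(X_{(2,0)}\otimes X_{(-2,0)}\wedge X_{(0,-2)}\bigr)$ with a nonzero scalar multiple of $X_{(-1,1)}\overline{\Psi}_\infty$. The element $X_{(-1,1)}$ is a compact root vector, so lies in $\mathfrak{k}$ and acts on $\tau_{(1,-3)}$ as a raising operator, sending the highest weight vector $\overline{\Psi}_\infty$ to a nonzero generator of the one-dimensional weight-$(0,-2)$ subspace of $\tau_{(1,-3)}$. The input $X_{(2,0)}\otimes X_{(-2,0)}\wedge X_{(0,-2)}$ has weight $(0,-2)$, so by $K_\infty$-equivariance its image under $\Omega_{\overline{\Psi}_\infty}$ lies on the same line. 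To check nonvanishing of the proportionality constant, apply $X_{(-1,1)}$ to the defining identity $\Omega_{\overline{\Psi}_\infty}(X_{(2,0)}\otimes X_{(0,-2)}\wedge X_{(-1,-1)})=\overline{\Psi}_\infty$ and expand using the brackets $[X_{(-1,1)},X_\beta]$ for $\beta\in\{(2,0),(0,-2),(-1,-1)\}$: the resulting identity expresses $X_{(-1,1)}\overline{\Psi}_\infty$ as a linear combination in which the coefficient of $\Omega_{\overline{\Psi}_\infty}(X_{(2,0)}\otimes X_{(-2,0)}\wedge X_{(0,-2)})$ is a nonzero multiple of a structure constant of $\mathfrak{gsp}_{4}$. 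Substituting back, the integrand becomes a nonzero constant times $(X_{(-1,1)}\overline{\Psi})(h_1,h_2)\Phi(h_2)$, which combined with Step~1 gives the claimed equivalence.

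The main obstacle is the archimedean computation of Steps 2 and 3: carefully bookkeeping the K\"unneth decomposition and Hodge-type discrimination for $d\iota(\mathbf{1})$, and verifying that the proportionality constant extracted from the minimal $K_\infty$-type is nonzero via the bracket identity. Step 1 is standard but requires an honest identification of the integration cycle with $Z(\A)H(\Q)\backslash H(\A)$ modulo the compact open determined by the level, absorbing combinatorial factors into $E^{\times}$.
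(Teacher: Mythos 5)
Your proof follows the same overall strategy as the paper: reduce the Poincar\'e pairing to an integral of the pullback form over the embedded cycle of modular curves, then perform the archimedean computation by evaluating the relative Lie algebra cocycle on $d\iota(\mathbf{1})$ and using a bracket identity to produce $X_{(-1,1)}\overline{\Psi}$. Your Step 2 (the explicit formulas $d\iota(v^{\pm},0)=(X_{\pm(2,0)},0)$, $d\iota(0,v^{\pm})=(X_{\pm(0,2)},v^{\pm})$, and the K\"unneth/Hodge-type discrimination killing the $v^{-}$-term) is correct and actually more explicit than what the paper writes.

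There is, however, a genuine gap in Step 1. You invoke ``Poincar\'e duality together with the projection formula for the proper embedding'' to reduce the pairing to an integral over the cycle, but the harmonic form $\overline{\omega}_\Psi \otimes \eta_\Phi$ is \emph{not} compactly supported (the Shimura varieties $S(N)\times Y(N)$ and $Y(N)\times_{\Q(\mu_N)}Y(N)$ are open), so the pairing with the cycle class of the non-compact subvariety is not a priori computed by the naive integral, and the projection formula does not by itself resolve this. The paper's proof addresses this explicitly: it represents the image of $\mathcal{Z}_N$ in de Rham cohomology by the integration current along $Y(N)\times_{\Q(\mu_N)}Y(N)$ (Griffiths--Harris), notes that $\overline{\Omega}_\Psi\otimes\Omega_\Phi$ is cuspidal, and then uses Borel's Cor.\ 5.5 to produce a rapidly decreasing form $\Omega'$ such that $\overline{\Omega}_\Psi\otimes\Omega_\Phi - d\Omega'$ is compactly supported; the Stokes argument on a smooth compactification then shows $\int_{Y(N)\times Y(N)} d\Omega' = 0$. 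Without some version of this argument your Step 1 is incomplete.

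A second, smaller point concerns the non-vanishing in Step 3. You expand $\ad_{X_{(-1,1)}}$ applied to the defining input and note that the coefficient of $\Omega_{\overline{\Psi}_\infty}(X_{(2,0)}\otimes X_{(-2,0)}\wedge X_{(0,-2)})$ is a non-zero structure constant, but the expansion produces a second term $\Omega_{\overline{\Psi}_\infty}(X_{(1,1)}\otimes X_{(0,-2)}\wedge X_{(-1,-1)})$ which, by your own weight argument, lies on the \emph{same} line; hence the mere non-vanishing of one coefficient does not yet preclude a cancellation when the two contributions are combined. One needs either an orthogonality argument showing that the $\tau_{(1,-3)}$-projection of $X_{(2,0)}\otimes X_{(-2,0)}\wedge X_{(0,-2)}$ is nonzero, or the explicit structure-constant bookkeeping the paper delegates to the computation of \cite{lemma2} Lem.\ 4.27. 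You acknowledge this as an ``obstacle,'' which is the right instinct, but as written the step is not closed.
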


\begin{proof} Recall that we denote by $\mathcal{Z}_N \in H^4_B(S(N) \times Y(N),\Q(2))^+$ the cohomology class of $\iota_N(Y(N) \times_{\Q(\mu_N)} Y(N))$. According to \cite{griffiths-harris} Ch. 3, example 1 p. 386, the image  of $\mathcal{Z}_N$ in the de Rham cohomology space $H^4_{dR}(S(N) \times Y(N), \C(2))$ is given by the integration current along the subvariety $Y(N) \times_{\Q(\mu_N)} Y(N)$. This means that for any closed differential form with compact support $\Omega$ of degree $4$ on $S(N) \times Y(N)$, the Poincar\'e duality pairing $
\langle \mathcal{Z}_N, \Omega \rangle$ is given by $$
\langle \mathcal{Z}_N, \Omega \rangle=\int_{Y(N) \times_{\Q(\mu_N)} Y(N)} \Omega.
$$
By the invariance properties of $\Psi_f$ and $\Phi_f$, the element $\overline{\Omega}_\Psi \otimes \Omega_{\Phi}$ defines  a harmonic differential form on $S(N) \times Y(N)$. It is not compactly supported, however it is cuspidal. As a consequence, by \cite{borel0} Cor. 5.5, there exists a rapidly decreasing differential form $\Omega'$ such that the difference $\overline{\Omega}_\Psi \otimes \Omega_\Phi-d\Omega'$ is compactly supported. The integral
$$
\int_{Y(N) \times_{\Q(\mu_N)} Y(N)} d \Omega'
$$
converges and is equal to zero. This follows from the fact that, as $\Omega'$ is rapidly decreasing, it extends to a smooth differential form on a smooth compactification of $Y(N) \times_{\Q(\mu_N)} Y(N)$ which is zero on the boundary and from the Stokes theorem. As a consequence
\begin{eqnarray*}
\langle v_\mathcal{Z}, \overline{\omega}_\Psi \otimes \eta_\Phi \rangle_{dR, \C} &=& \int_{Y(N) \times_{\Q(\mu_N)} Y(N)} (\overline{\Omega}_\Psi \otimes \Omega_\Phi-d \Omega')\\
&=& \int_{Y(N) \times_{\Q(\mu_N)} Y(N)} \overline{\Omega}_\Psi \otimes \Omega_\Phi.
\end{eqnarray*}
Let $U(N)$ denote the principal level $N$ congruence subgroup of $H(\widehat{\Z})$. As complex analytic varieties, we have
$$
Y(N) \times_{\Q(\mu_N)} Y(N) \simeq H(\Q) \backslash H(\A)/U_\infty U(N).
$$
Let $h_N$ denote the cardinality of the finite group $Z(\Q)\backslash Z(\A_f)/U(N) \cap Z(\A_f)$. We have
$$
\int_{Y(N) \times_{\Q(\mu_N)} Y(N)} \overline{\Omega}_\Psi \otimes \Omega_\Phi =  \int_{H(\Q) \backslash H(\A)/U_\infty U(N)} (\overline{\Omega}_\Psi \otimes \Omega_\Phi)(\textbf{1})dh
$$
\begin{eqnarray*}
&=&  \int_{H(\Q) \backslash H(\A)/U_\infty U(N)} \overline{\Omega}_\Psi((v^-, 0) \wedge (0, v^-) \otimes (v^+, 0)) \Omega_\Phi(v^+) dh\\
&=&  \int_{\R^\times_+ H(\Q) \backslash H(\A)/U(N)} \overline{\Omega}_\Psi((v^-, 0) \wedge (0, v^-) \otimes (v^+, 0)) \Omega_\Phi(v^+) dh\\
&=& 4 h_N \int_{Z(\A)H(\Q) \backslash H(\A)/U(N)}\overline{\Omega}_\Psi((v^-, 0) \wedge (0, v^-) \otimes (v^+, 0)) \Omega_\Phi(v^+) dh\\
&=& \frac{4 h_N}{\mrm{vol}(U(N))} \int_{Z(\A)H(\Q) \backslash H(\A)}\overline{\Omega}_\Psi((v^-, 0) \wedge (0, v^-) \otimes (v^+, 0)) \Omega_\Phi(v^+) dh
\end{eqnarray*}
By an explicit computation similar to the ones conducted in the proof of \cite{lemma2} Lem. 4.27, one proves that 
$$
(v^-, 0) \wedge (0, v^-) \otimes (v^+, 0)=r\ad_{X_{(-1,1)}}(X_{(0,-2)} \wedge X_{(-1,-1)} \otimes X_{(2, 0)})
$$
for some explicit $r \in \Q^\times$ that we do not need to compute as we work up to $\sim$. Note that $\mrm{vol}(U(N))$ is a non-zero rational number with our choice of measure. So the conclusion follows.
\end{proof}

\subsubsection{The pairing associated to the Deligne rational structure} \label{pairing-deligne}

\begin{pro} \label{poles} If the complete $L$-function $L(s, \Pi \times \sigma)$ has a pole at $s=1$ then $\Pi$ is obtained as a Weil lifting from $\mrm{GSO}(2, 2, \A)$ of a pair $(\sigma_1, \sigma_2)$ of inequivalent irreducible cuspidal automorphic representations of $\mrm{GL}(2, \A)$ with the same central characters. 
\end{pro}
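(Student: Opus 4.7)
The plan is to deduce the structure of $\Pi$ from the pole by passing through the functorial transfer to $\mrm{GL}(4)$.

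Since $\Pi$ is globally generic, the functorial lifting of Asgari-Shahidi \cite{asgari-shahidi} attaches to $\Pi$ an isobaric automorphic representation $\Pi'$ of $\mrm{GL}(4, \A)$ whose Rankin-Selberg $L$-function against $\sigma$ matches $L(s, \Pi \times \sigma)$ at every place (the compatibility at ramified places being part of the content of the transfer). The pole analysis of Jacquet-Shalika and Moeglin-Waldspurger for $\mrm{GL}(n) \times \mrm{GL}(m)$ Rankin-Selberg $L$-functions says that if $\Pi'$ were cuspidal of dimension four, then $L(s, \Pi' \times \sigma)$ would be holomorphic at $s=1$, contradicting the hypothesis. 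Thus $\Pi'$ is a proper isobaric sum, and a pole at $s=1$ forces an isobaric summand of dimension two contragredient to $\sigma$; a dimension count then leaves only the possibility $\Pi' = \sigma_1 \boxplus \sigma_2$ with $\sigma_1, \sigma_2$ cuspidal representations of $\mrm{GL}(2, \A)$.

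Next, I would invoke the descent characterization of such transfers: a globally generic cuspidal $\Pi$ of $\mrm{GSp}(4)$ whose transfer to $\mrm{GL}(4)$ is a $2 \boxplus 2$ isobaric sum is precisely the Weil (theta) lift from $\mrm{GSO}(2, 2, \A)$ of the pair $(\sigma_1, \sigma_2)$. This identification is part of the classification of endoscopic representations of $\mrm{GSp}(4)$ via the theta correspondence for $(\mrm{GSp}(4), \mrm{GSO}(2, 2))$, going back to work of Soudry and Howe-Piatetski-Shapiro. The equality of central characters $\omega_{\sigma_1} = \omega_{\sigma_2}$ is intrinsic to the parametrization of automorphic representations of $\mrm{GSO}(2, 2, \A)$ by such pairs, while the inequivalence $\sigma_1 \not\simeq \sigma_2$ is forced by the cuspidality of $\Pi$, since the theta lift of a pair of equivalent $\mrm{GL}(2)$-representations fails to be cuspidal on $\mrm{GSp}(4)$.

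The main obstacle is citing the precise descent statement and verifying the compatibility of the $L$-function defined via Novodvorsky's integral representation (as in \cite{moriyama1}) with the Rankin-Selberg $L$-function of $\Pi' \times \sigma$ at every place, so that the pole at $s=1$ is preserved under the transfer. A more direct alternative would be to extract the residue of the Novodvorsky zeta integral at $s=1$ as a non-vanishing $H$-period of cusp forms in $\Pi \otimes \sigma$ and then apply see-saw duality between $(\mrm{GSp}(4), \mrm{GSO}(2,2))$ and $(\mrm{GL}(2), \mrm{GL}(2))$ to produce the theta descent of $\Pi$ to $\mrm{GSO}(2,2,\A)$ directly, bypassing the $\mrm{GL}(4)$ detour.
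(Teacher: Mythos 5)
Your proposal is correct in outline but takes a genuinely different route from the paper. The paper's proof is a direct citation: Piatetski-Shapiro--Soudry \cite{pssoudry} Thm.~1.3 shows (via the integral representation itself) that a pole of $L(s,\Pi\times\sigma)$ at $s=1$ forces $\Pi$ to be a theta/Weil lift from $\mrm{GSO}(2,2,\A)$, and \cite{weissauer2} Lem.~5.2 supplies the structure (inequivalent cuspidal $\sigma_1,\sigma_2$ with matching central characters) once one knows $\Pi$ is not CAP, which is automatic for globally generic $\Pi$ by \cite{pssoudry2}. Your argument instead routes through the Asgari--Shahidi functorial transfer $\Pi\mapsto\Pi'$ to $\mrm{GL}(4,\A)$, uses Jacquet--Shalika pole theory for Rankin--Selberg $L$-functions to force $\Pi'$ to be a proper isobaric sum $\sigma_1\boxplus\sigma_2$, and then invokes the classification of the non-cuspidal-transfer case as the Weil lift. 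This does work, and indeed Asgari--Shahidi's own classification theorem directly gives the dichotomy (cuspidal transfer, or $\sigma_1\boxplus\sigma_2$ with $\sigma_1\not\simeq\sigma_2$, $\omega_{\sigma_1}=\omega_{\sigma_2}=\omega_\Pi$) as well as its identification with the $\mrm{GSO}(2,2)$ lift, so the hand-waving around ``descent characterization'' in your second paragraph can be tightened into a single citation. The one substantive point you flag yourself is real: the $L$-function in the statement is built from Novodvorsky/Soudry's local factors, not a priori from the Langlands factors of $\Pi'\times\sigma$, so one must check that the discrepancy at the finitely many ramified places does not create or absorb a pole at $s=1$; the paper avoids this comparison entirely by staying within the integral-representation framework of \cite{pssoudry}. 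Finally, the alternative you sketch in your last sentence---extracting a nonvanishing $H$-period from the residue and applying see-saw duality for $(\mrm{GSp}(4),\mrm{GSO}(2,2))$---is essentially how \cite{pssoudry} Thm.~1.3 is proved, so that alternative would converge with the paper's actual argument. In short: your main route buys a more standard, $\mrm{GL}(n)$-flavored argument at the cost of a local $L$-factor compatibility check, while the paper's route is shorter and internal to the $\mrm{GSp}(4)\times\mrm{GL}(2)$ integral representation.
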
 

\begin{proof} This follows from \cite{pssoudry} Thm. 1.3 and \cite{weissauer2} Lem. 5.2 1) as we assume that $\Pi$ is globally generic hence is not CAP.
\end{proof}

\begin{pro} \label{mult} Assume that $L(s, \Pi \times \sigma)$ has a pole at $s=1$. Then 
$$
m(\Pi_\infty^H \otimes \Pi_f)=0.
$$
\end{pro}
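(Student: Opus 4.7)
The plan is to leverage the concrete identification of $\Pi$ provided by Proposition~\ref{poles} together with the archimedean behavior of the theta correspondence, and then to block the appearance of $\Pi_\infty^H \otimes \Pi_f$ via the multiplicity formula for the resulting Yoshida $A$-packet.

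By Proposition~\ref{poles}, the hypothesis that $L(s, \Pi \times \sigma)$ has a pole at $s = 1$ forces $\Pi$ to be the Weil lift from $\mrm{GSO}(2, 2, \A)$ of a pair $(\sigma_1, \sigma_2)$ of inequivalent cuspidal automorphic representations of $\mrm{GL}(2, \A)$ sharing the same central character. Since $\Pi_f$ and the $\sigma_{i, f}$ are cohomological with trivial coefficients, the archimedean components $\sigma_{i, \infty}$ must be the weight-two discrete series of $\mrm{GL}(2, \R)$. The first step is then to analyse the local theta lift at infinity for the dual pair $(\mrm{GSO}(2, 2)_\R, \mrm{GSp}(4)_\R)$ applied to $(\sigma_{1, \infty}, \sigma_{2, \infty})$: the output is a single irreducible tempered representation in the discrete series $L$-packet of $\mrm{GSp}(4, \R)$ with the infinitesimal and central characters of the trivial representation, and an explicit computation identifies it as the generic member $\Pi_\infty^W$ rather than the holomorphic member $\Pi_\infty^H$. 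This matches what Proposition~\ref{js} already records, namely $\Pi_\infty \simeq \Pi_\infty^W$.

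Suppose now, for a contradiction, that $\Pi' := \Pi_\infty^H \otimes \Pi_f$ were cuspidal automorphic. Since $\Pi'_f = \Pi_f$ has the same finite-place Satake parameters as the Yoshida lift $\Pi$, the representation $\Pi'$ would lie in the same global tempered Arthur packet, namely the one attached to $\phi_{\sigma_1} \oplus \phi_{\sigma_2}$, with component group $S_\phi \simeq \Z/2\Z$. Arthur's multiplicity formula says that the cuspidal multiplicity of a member is controlled by the product, over all places, of a local character of $S_\phi$ attached to each local constituent; because $\Pi_\infty^W \otimes \Pi_f$ actually appears in the cuspidal spectrum, this global product is already pinned down by $\Pi_f$ together with the local character of $\Pi_\infty^W$. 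Replacing $\Pi_\infty^W$ by the other member $\Pi_\infty^H$ flips only the archimedean local character while leaving the finite-place ones fixed, so the global product no longer matches the canonical character, forcing $m(\Pi_\infty^H \otimes \Pi_f) = 0$.

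The main obstacle is the clean invocation of the archimedean theta correspondence and of the multiplicity formula for Yoshida $A$-packets. I would aim to sidestep a direct appeal to Arthur's general formula by using the explicit construction of the Weil lift from $\mrm{GSO}(2, 2)$, which packages the archimedean local lift as part of its output, together with a statement in \cite{weissauer2} on the cohomological cuspidal spectrum of $\mrm{GSp}(4)$ ruling out the holomorphic archimedean type for precisely this endoscopic global parameter.
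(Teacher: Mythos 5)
Your proposal follows the same route as the paper: identify $\Pi$ as a Yoshida lift via Proposition~\ref{poles}, then rule out the holomorphic archimedean member by the multiplicity formula for the resulting endoscopic packet --- the paper realizes this by simply citing \cite{weissauer2} Thm.~5.2~(4) and Lem.~5.2, which is exactly what you anticipate in your final paragraph. Two minor remarks, neither of which affects the validity of the argument. First, your archimedean theta computation in the opening paragraph is not needed: $\Pi_\infty \simeq \Pi_\infty^W$ is already part of the standing assumptions, since $\Pi$ is globally generic, and is recorded by Proposition~\ref{js}. Second, the claim that both $\sigma_{1,\infty}$ and $\sigma_{2,\infty}$ are the weight-two discrete series is not correct: for the Yoshida lift to be cohomological with trivial coefficients the two $\mrm{GL}(2)$ constituents must have \emph{distinct} archimedean types (up to twist, one of weight $4$ and one of weight $2$, supplying the Hodge types $(3,0),(0,3)$ and $(2,1),(1,2)$ respectively). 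The multiplicity argument itself, which only uses the local character of the component group $S_\phi \simeq \Z/2\Z$ at infinity, is unaffected by this.
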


\begin{proof} This follows from the previous result and from \cite{weissauer2} Thm. 5.2. (4) and Lem. 5.2.
\end{proof}

In the rest of section \ref{pairing-deligne}, we assume that $L(s, \Pi \times \sigma)$ has a pole at $s=1$ so that the conclusion of Prop. \ref{mult} holds. Hence, by the Hodge decompositions (\ref{hodge}) and (\ref{hodge2}), we have the decompositions
\begin{eqnarray*}
M_B(\Pi_f) &=& M_B(\Pi_f)^+ \oplus M_B(\Pi_f)^-,\\
M_B(\sigma_f) &=& M_B(\sigma_f)^+ \oplus M_B(\sigma_f)^-
\end{eqnarray*}
where $M_B(\Pi_f)^\pm$ and $M_B(\sigma_f)^\pm$ are one-dimensional $E$-vector spaces. Let $v^\pm$ and $w^\pm$ be generators of $M_B(\Pi_f)^\pm$ and $M_B(\sigma_f)^\pm$ respectively. By a slight abuse of notation, we regard $(v^+, v^-)$ as a basis of $M_B(\Pi_f)_{\C}$ and $(w^+, w^-)$ as a basis of $M_B(\sigma_f)_{\C}$. Let $\omega$ be a generator of the one dimensional $E$-vector space $F^2 M_{dR}(\Pi_f)$ and let $\eta$ be a generator of the one dimensional $E$-vector space $F^1 M_{dR}(\sigma_f)$. Then $\omega \otimes \eta$ is a generator of $F^3 M_{dR}(\Pi_f \times \sigma_f)$. Via the comparison isomorphisms, we have
\begin{eqnarray*}
\omega &=& \alpha^+ v^+ + \alpha^- v^-,\\
\eta &=& \beta^+ w^+ + \beta^-  w^-.
\end{eqnarray*}
for some $\alpha^+, \alpha^-, \beta^+, \beta^- \in \C \otimes E$. Note that as $\omega$ and $\eta$ are defined over $E$ in the de Rham rational structure, we have $\alpha^+, \beta^+ \in \R \otimes_{\Q} E$ and $\alpha^-, \beta^- \in \R i\otimes E$. The image of $\omega \otimes \eta$ by the natural map $\phi: F^3 M_{dR}(\Pi_f \times \sigma_f)_{\R} \rightarrow M_B(\Pi_f \times \sigma_f, 2)^+_{\R}$ in the exact sequence of Prop. \ref{shortexact} is
$$
\phi(\omega \otimes \eta)=(2 \pi i)^2 (\alpha^+ \beta^+ v^+ \otimes w^+ + \alpha^- \beta^- v^- \otimes w^-).
$$
As $\phi$ is injective, at least one of the two real numbers $\alpha^+ \beta^+$ and $\alpha^- \beta^-$ is non-zero. 

\begin{lem} \label{q-str-beilinson} Let $\circ \in \{\pm\}$ be such that $\alpha^\circ \beta^\circ \neq 0$ then 
$$
v_\mathcal{B}=\frac{1}{(2 \pi i)^2 \alpha^\circ \beta^\circ} v^{-\circ}\otimes w^{-\circ}
$$ maps to a generator of $\mathcal{B}(\Pi_f \times \sigma_f)$ by the right hand map 
$$
M_B(\Pi_f \times \sigma_f, 2)^+ \rightarrow \mrm{Ext}^1_{\mrm{MHS}_{\R}^+}(\R(0), M_B(\Pi_f \times \sigma_f, 3)_{\R})
$$
of the short exact sequence of Prop. \ref{shortexact}. 
\end{lem}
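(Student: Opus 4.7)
The plan is to make the canonical isomorphism attached to the short exact sequence of Prop.~\ref{shortexact} completely explicit on the chosen bases, and then to identify which element of $M_B(\Pi_f \times \sigma_f, 2)^+$ has image generating $\mathcal{B}(\Pi_f \times \sigma_f)$ under the right-hand surjection. As a preliminary, the standing hypothesis together with Prop.~\ref{mult} forces $m(\Pi_\infty^H \otimes \Pi_f) = 0$, so by Prop.~\ref{ranks} the three $\R \otimes_\Q E$-modules in that exact sequence have ranks $1$, $2$, and $1$. Thus $\omega \otimes \eta$ generates $F^3 M_{dR}(\Pi_f \times \sigma_f)$ as an $E$-module, and $\{v^+ \otimes w^+,\, v^- \otimes w^-\}$ is an $E$-basis of $M_B(\Pi_f \times \sigma_f, 2)^+$.

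The key step will be to apply the general recipe that, for any short exact sequence $0 \to F \xrightarrow{\phi} M \xrightarrow{\pi} Q \to 0$ of finite-rank free modules over a commutative ring, the induced canonical isomorphism $\det F \otimes \det Q \simeq \det M$ sends a decomposable tensor $f \otimes \pi(x)$ to $\phi(f) \wedge x$, independently of the lift $x$. Applying this here with $f = \omega \otimes \eta$ and $x = v^{-\circ} \otimes w^{-\circ}$, the formula for $\phi(\omega \otimes \eta)$ recalled just before the statement, combined with the vanishing $(v^{-\circ} \otimes w^{-\circ}) \wedge (v^{-\circ} \otimes w^{-\circ}) = 0$, yields
$$
\phi(\omega \otimes \eta) \wedge (v^{-\circ} \otimes w^{-\circ}) = \epsilon\, (2\pi i)^2 \alpha^\circ \beta^\circ\, (v^+ \otimes w^+) \wedge (v^- \otimes w^-)
$$
for some $\epsilon \in \{\pm 1\}$ depending only on $\circ$. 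Since $(2\pi i)^2 \alpha^\circ \beta^\circ \in (\R \otimes_\Q E)^\times$ by the choice of $\circ$, dividing and rearranging gives
$$
(v^+ \otimes w^+) \wedge (v^- \otimes w^-) = \phi(\omega \otimes \eta) \wedge \frac{\epsilon}{(2\pi i)^2 \alpha^\circ \beta^\circ}\,(v^{-\circ} \otimes w^{-\circ}).
$$

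By the explicit recipe above, this identifies the generator $(\omega \otimes \eta)^\vee \otimes \bigl((v^+ \otimes w^+) \wedge (v^- \otimes w^-)\bigr)$ of $\mathcal{B}(\Pi_f \times \sigma_f)$ from Def.~\ref{beilinson-qs} with $\epsilon$ times the image of $v_\mathcal{B}$ under the surjection of Prop.~\ref{shortexact}, inside the rank-one $\R \otimes_\Q E$-module $\mrm{Ext}^1_{\mrm{MHS}_\R^+}(\R(0), M_B(\Pi_f \times \sigma_f, 3)_\R)$. Since $\epsilon \in E^\times$, this yields the claim. I do not foresee a substantial obstacle: the argument is pure linear algebra of determinants of short exact sequences, the crucial computational observation being simply that wedging with $v^{-\circ} \otimes w^{-\circ}$ kills the $\alpha^{-\circ}\beta^{-\circ}$-component of $\phi(\omega \otimes \eta)$ and leaves a unit multiple of the chosen top wedge on $M_B(\Pi_f \times \sigma_f, 2)^+$.
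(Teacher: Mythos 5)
Your argument is correct and is essentially the same as the paper's, which states the equivalent criterion that $\lambda^+ v^+ \otimes w^+ + \lambda^- v^- \otimes w^-$ maps to a generator of $\mathcal{B}(\Pi_f \times \sigma_f)$ iff $(2\pi i)^2(\lambda^+ \alpha^-\beta^- - \lambda^-\alpha^+\beta^+) \in E^\times$ and then refers to \cite{lemma2} Lem.~6.1 for the verification. You have simply carried out in full the determinant-of-a-short-exact-sequence computation that the paper delegates to that reference.
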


\begin{proof} For any $\lambda^+, \lambda^- \in \R \otimes_{\Q} E$, the vector $\lambda^+ v^+ \otimes w^+ + \lambda^- v^- \otimes w^- \in M_B(\Pi_f \times \sigma_f, 2)^+$ maps to a generator $\mathcal{B}(\Pi_f \times \sigma_f)$ if and only if $(2 \pi i)^2(\lambda^+ \alpha^- \beta^- - \lambda^- \alpha^+ \beta^+) \in E^\times$. This can be proved in exactly the same way as \cite{lemma2} Lem. 6.1. The conclusion follows.
\end{proof}

Let $\overline{\omega}$ be the vector in $M_B(\Pi_f \times \sigma_f)_{\C}$ obtained by applying the complex conjugation on the coefficients. Then $\overline{\omega}=\alpha^+  v^+  - \alpha^- v^- $. Let us consider
$$
 \overline{\omega} \otimes \eta = \alpha^+ \beta^+ v^+ \otimes w^+ + \alpha^+ \beta^- v^+ \otimes w^- - \alpha^- \beta^+ v^- \otimes w^+ - \alpha^- \beta^- v^- \otimes w^-.
$$

\begin{lem} \label{pairing1} We have $\langle v_\mathcal{B}, \overline{\omega} \otimes \eta \rangle_{B, \C} \sim (2\pi i)^{-6}$.
\end{lem}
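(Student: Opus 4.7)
The plan is to expand both sides in the basis $\{v^{\pm}\otimes w^{\pm}\}$ of $M_B(\Pi_f\times\sigma_f)_{\C}$, use bilinearity of the Poincar\'e duality pairing together with the K\"unneth factorization, and then show that almost all of the resulting cross-pairings vanish so that only one term survives. By the K\"unneth formula, the pairing $\langle\,\cdot\,,\,\cdot\,\rangle_{B,\C}$ on $H^4_{B,!}(S\times Y)_{\C}$ factors (up to a sign coming from the cup product on $H^3\otimes H^1$) as
$$
\langle v^{\epsilon_1}\otimes w^{\epsilon_2},v^{\epsilon_3}\otimes w^{\epsilon_4}\rangle_{B,\C}=\pm\,\langle v^{\epsilon_1},v^{\epsilon_3}\rangle_{S}\cdot\langle w^{\epsilon_2},w^{\epsilon_4}\rangle_{Y},
$$
where the two factors are the Poincar\'e duality pairings on $H^3_{B,!}(S)$ and $H^1_{B,!}(Y)$ respectively.

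The key observation is that both of these pairings are skew-symmetric, being cup-product pairings on the middle cohomology of odd-dimensional smooth varieties (the sign $(-1)^{3\cdot 3}=(-1)^{1\cdot 1}=-1$). Hence $\langle v^{\pm},v^{\pm}\rangle_S=0$ and $\langle w^{\pm},w^{\pm}\rangle_Y=0$. Consequently the only Künneth cross-pairings that can be nonzero are those with $\epsilon_1\neq\epsilon_3$ and simultaneously $\epsilon_2\neq\epsilon_4$. Taking $\circ=+$ for concreteness (the case $\circ=-$ is identical) and inserting the expansion
$$
\overline{\omega}\otimes\eta=\alpha^+\beta^+v^+\otimes w^++\alpha^+\beta^-v^+\otimes w^--\alpha^-\beta^+v^-\otimes w^+-\alpha^-\beta^-v^-\otimes w^-,
$$
only the first of the four terms survives when paired with $v_\mathcal{B}=\frac{1}{(2\pi i)^2\alpha^+\beta^+}\,v^-\otimes w^-$. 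The factor $\alpha^+\beta^+$ cancels exactly, yielding
$$
\langle v_\mathcal{B},\overline{\omega}\otimes\eta\rangle_{B,\C}=\pm\frac{\langle v^-\otimes w^-,\,v^+\otimes w^+\rangle_{B,\C}}{(2\pi i)^2}.
$$

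It remains to identify the right-hand Poincar\'e pairing. Since $v^\pm$ and $w^\pm$ are $E$-rational bases of $M_B(\Pi_f)^\pm$ and $M_B(\sigma_f)^\pm$, the $E$-rationality of Poincar\'e duality forces $\langle v^-,v^+\rangle_S$ and $\langle w^-,w^+\rangle_Y$ to lie in the Betti realizations $E(-3)_B=(2\pi i)^{-3}E$ and $E(-1)_B=(2\pi i)^{-1}E$ respectively. Nondegeneracy of Poincar\'e duality on the rank-two $E$-modules $M_B(\Pi_f)$ and $M_B(\sigma_f)$, combined with the just-established vanishing of the diagonal pairings, guarantees that these crossed pairings are nonzero, hence lie in $(2\pi i)^{-3}E^{\times}$ and $(2\pi i)^{-1}E^{\times}$. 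Their product is in $(2\pi i)^{-4}E^\times$, and after dividing by $(2\pi i)^2$ we get $\langle v_\mathcal{B},\overline{\omega}\otimes\eta\rangle_{B,\C}\in (2\pi i)^{-6}E^\times$, i.e.\ $\sim (2\pi i)^{-6}$.

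The main obstacle is purely bookkeeping: one must check carefully that $\alpha^\pm,\beta^\pm$ lie in $\R\otimes_{\Q}E$ or $\R i\otimes_{\Q}E$ (as noted right before the lemma) so that the scalar cancellation $\alpha^+\beta^+/\alpha^+\beta^+=1$ is legitimate in $\R\otimes_{\Q}E$, and that the K\"unneth sign together with any symmetry-induced sign only contributes a unit in $E^\times$, which is absorbed by the equivalence $\sim$. No new ingredients beyond the formal properties of Poincar\'e duality (K\"unneth, skew-symmetry, $E$-rationality, nondegeneracy) are needed.
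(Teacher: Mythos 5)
Your final computation with the cross pairings and the cancellation of $\alpha^\circ\beta^\circ$ is correct, and the answer $(2\pi i)^{-6}$ is right. However, the key step in your argument --- that $\langle v^\pm, v^\pm\rangle_S = 0$ and $\langle w^\pm, w^\pm\rangle_Y = 0$ because the cup product pairings are skew-symmetric --- has a gap. Skew-symmetry holds for the pairing on $H^3_{B,!}(S^K, \Q)$ itself, but the pairing you are using is the one on $M_B(\Pi_f) = \Hom_{E[\mrm{GSp}(4,\A_f)]}(\Pi_f, H^3_{B,!}(S,\Q)\otimes E)$, which is obtained from $M_B(\Pi_f)\otimes M_B(\check\Pi_f)\to E(-3)_B$ after the identification $\check\Pi_f \simeq \Pi_f$ coming from the triviality of the central character. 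When one transports a skew-symmetric $G$-equivariant pairing through a self-duality $\check\Pi_f\simeq\Pi_f$, the induced pairing on the multiplicity space $M_B(\Pi_f)$ is skew-symmetric if and only if the invariant bilinear form on $\Pi_f$ is \emph{symmetric} (i.e.\ $\Pi_f$ is orthogonal); if $\Pi_f$ were symplectic the induced pairing on $M_B(\Pi_f)$ would be symmetric and your vanishing claim would not follow from (anti)symmetry alone. You do not verify which case occurs, so the skew-symmetry of the pairing on $M_B(\Pi_f)$ is unjustified.

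The paper avoids this issue entirely. It invokes the $F_\infty$-equivariance of the perfect pairing $M_B(\Pi_f)\otimes_E M_B(\Pi_f,3)\to E$: since $F_\infty$ acts trivially on $E$, the pairing must kill eigenvectors of opposite $F_\infty$-sign, and because $F_\infty$ acts by $-1$ on $(2\pi i)^3$ one finds that $v^\epsilon$ pairs to zero with $(2\pi i)^3 v^\epsilon$ and nontrivially with $(2\pi i)^3 v^{-\epsilon}$, i.e.\ $v^\pm$ is $E^\times$-dual to $(2\pi i)^3 v^\mp$. (The same argument with $(2\pi i)^1$ applies to $M_B(\sigma_f)$.) This gives exactly the vanishing of the diagonal pairings you need, without requiring any knowledge of the orthogonal/symplectic type of $\Pi_f$. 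If you replace your skew-symmetry argument by this $F_\infty$-equivariance argument, the rest of your proof goes through unchanged.
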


\begin{proof} Poincar\'e duality for $M_B(\Pi_f)$ is a perfect $F_\infty$-equivariant pairing $$M_B(\Pi_f) \otimes_E M_B(\Pi_f, 3) \rightarrow E.$$ As a consequence $v^\pm$ is dual to $(2\pi i)^3 v^\mp$ for this pairing, up to multiplication by an element of $E^\times$. Similarly, the vector $w^\pm$ is dual to $(2 \pi i) w^\mp$ for the pairing $$M_B(\sigma_f) \otimes_E M_B(\sigma_f, 1) \rightarrow E,$$ up to multiplication by an element of $E^\times$. Then the conclusion follows.
\end{proof}

\begin{pro} \label{delta} $\delta(\Pi_f \times \sigma_f, 3) \sim (2 \pi i)^{4}$
\end{pro}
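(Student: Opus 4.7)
The plan is to prove this abstractly via Poincar\'e duality rather than by a direct computation of period matrices. Set $M = M_B(\Pi_f \times \sigma_f)$; this is pure of weight $4$, with coefficients in $E$. Under the running assumption of this subsection, Prop. \ref{mult} gives $m(\Pi_\infty^H \otimes \Pi_f) = 0$, and combining this with $m(\sigma_\infty \otimes \sigma_f) = 1$ (Shalika, as in Prop. \ref{ranks}) and the K\"unneth formula, one sees that $\dim_E M = 2 \cdot 2 = 4$.

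The main step is to use Poincar\'e duality. The triviality of the central characters, together with the isomorphisms $\check{\Pi}_f \simeq \Pi_f$ and $\check{\sigma}_f \simeq \sigma_f$ of $E[G(\A_f)]$-modules already used in section \ref{pduality}, turns the Poincar\'e duality pairing $\langle\,\,,\,\,\rangle_B \colon M \otimes_E M \to E(-4)_B$ into a perfect $E$-bilinear pairing of objects of $\mrm{MHS}_{\Q, E}^+$, with a compatible de Rham analogue fitting into the commutative diagram (\ref{vertical}). This gives an $E$-rational isomorphism $M \simeq M^\vee(-4)$, defined over $E$ on both Betti and de Rham realizations and compatible with the comparison.

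I now apply two formal multiplicativity properties of the Deligne determinant $\delta$, each valid up to $E^\times$. First, $\delta(N(n)) \sim (2\pi i)^{n \dim_E N} \delta(N)$, since $\delta(\Q(1)) = 2\pi i$ for the natural generators $(2\pi i) \in \Q(1)_B$ and $1 \in \Q(1)_{dR}$. Second, $\delta(N^\vee) \sim \delta(N)^{-1}$, because the comparison isomorphism for $N^\vee$ in dual $E$-bases is the inverse transpose of that for $N$. Applied to $M \simeq M^\vee(-4)$, these give
$$
\delta(M) \sim \delta(M^\vee(-4)) \sim (2\pi i)^{-16}\, \delta(M^\vee) \sim (2\pi i)^{-16}\, \delta(M)^{-1},
$$
so $\delta(M)^2 \sim (2\pi i)^{-16}$ and hence $\delta(M) \sim (2\pi i)^{-8}$ (the sign ambiguity in the square root is absorbed into $E^\times$). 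Finally, by the Tate twist formula,
$$
\delta(\Pi_f \times \sigma_f, 3) \sim (2\pi i)^{3 \cdot 4}\, \delta(M) \sim (2\pi i)^{12-8} = (2\pi i)^{4}.
$$

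The only delicate point is verifying that the identification $M \simeq M^\vee(-4)$ is genuinely $E$-rational on both sides of the comparison and compatible with it, rather than merely an abstract isomorphism of complex vector spaces; this is essentially already built into the commutative diagram (\ref{vertical}) of section \ref{pduality}, so no further input is needed.
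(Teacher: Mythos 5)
Your argument is correct and is essentially the paper's own proof, repackaged: the paper writes out period matrices $P$, $J_B$, $J_{dR}$ and the relation $P^t J_{dR} P = i_\infty(J_B)$ explicitly, whereas you phrase the same content as the formal multiplicativity rules $\delta(N^\vee)\sim\delta(N)^{-1}$ and $\delta(N(n))\sim(2\pi i)^{n\dim_E N}\delta(N)$ applied to the $E$-rational duality $M\simeq M^\vee(-4)$ coming from Poincar\'e duality and the triviality of the central characters. Both routes lead to $\delta(M)^2\sim(2\pi i)^{-16}$ and then to $\delta(\Pi_f\times\sigma_f,3)\sim(2\pi i)^{12}\delta(M)\sim(2\pi i)^4$ via the Tate twist, so no new idea is involved.
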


\begin{proof} Let $e_1, e_2, e_3, e_4$ be the vectors of a basis of $M_B(\Pi_f \times \sigma_f)$, let $f_1, f_2, f_3, f_4$ be the vectors of a basis of $M_{dR}(\Pi_f \times \sigma_f)$, let $P$ be the matrix of the vectors $I_\infty(e_i)$ in the basis $(f_j)$ and let us denote $\delta(\Pi_f \times \sigma_f)=\det P$. Let us consider the matrices $J_B=(\langle e_i, e_j \rangle_B)$ and $J_{dR}=(\langle f_i, f_j \rangle_{dR})$. Let $i_\infty$ be the comparison isomorphism which is the right vertical line of the diagram (\ref{vertical}). Then we have $P^t J_{dR} P=i_\infty(J_B)$ where $i_{\infty}(J_B)$ denotes the matrix obtained by applying $i_\infty$ to the coefficients of $J_B$.
Note that the matrices $J_{dR}$ and $(2\pi i)^4 i_\infty(J_B)$ have coefficients in $E$. The last statement follows from the computation of the periods of the Tate motive as explained in \cite{valeurs-deligne} 3. Taking determinants, we see that $\delta(\Pi_f \times \sigma_f)^2 \sim (2 \pi i)^{-16}$ so that we have $\delta(\Pi_f \times \sigma_f) \sim (2 \pi i)^{-8}$. As $M(\Pi_f \times \sigma_f)$ has rank four, it follows from \cite{valeurs-deligne} (5.1.9) that 
$
\delta(\Pi_f \times \sigma_f, 3) \sim (2 \pi i)^{12} \delta(\Pi_f \times \sigma_f) 
$ and the conclusion follows.
\end{proof}

\begin{cor} \label{un-corollaire} Let $\omega$ be a generator of $F^2 M_{dR}(\Pi_f)$ and let $\eta$ be a generator of $F^1 M_{dR}(\sigma_f)$. Then
$$
\mathcal{Z}(\Pi_f \times \sigma_f) = (2\pi i)^8 \langle v_\mathcal{Z}, \overline{\omega} \otimes \eta \rangle_{B, \C} \mathcal{D}(\Pi_f \times \sigma_f).
$$
\end{cor}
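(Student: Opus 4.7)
The plan is to compute the proportionality scalar between the two $E$-lines $\mathcal{Z}(\Pi_f \times \sigma_f)$ and $\mathcal{B}(\Pi_f \times \sigma_f)$ inside $\mrm{Ext}^1_{\mrm{MHS}_{\R}^+}(\R(0), M_B(\Pi_f \times \sigma_f, 3)_{\R})$. Under the standing assumption that $L(s, \Pi \times \sigma)$ has a pole at $s = 1$, Prop. \ref{mult} yields $m(\Pi_\infty^H \otimes \Pi_f) = 0$, so Prop. \ref{ranks} says that this Ext group has rank one over $\R \otimes_{\Q} E$; in particular there exists $\mu \in \R \otimes_{\Q} E$ (well defined modulo $E^\times$) with $\mathcal{Z}(\Pi_f \times \sigma_f) = \mu \mathcal{B}(\Pi_f \times \sigma_f)$.

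To pin down $\mu$, I would pair both lines against $\overline{\omega} \otimes \eta$. The vanishing $m(\Pi_\infty^H \otimes \Pi_f) = 0$ forces $F^2 M_{dR}(\Pi_f)_{\C} \simeq M^{2,1}(\Pi_f)$ and $F^1 M_{dR}(\sigma_f)_{\C} \simeq M^{1,0}(\sigma_f)$ under the comparison isomorphism, so after coefficient-wise complex conjugation $\overline{\omega} \otimes \eta$ lies in the pure Hodge type $(2,2)$. The argument of Lem. \ref{nullite} then applies verbatim: Poincar\'e duality being a morphism of Hodge structures and $F^3 M_{dR}(\Pi_f \times \sigma_f)_{\R}$ landing in $M^{4,0} \oplus M^{3,1} \oplus M^{1,3} \oplus M^{0,4}$, the pairing $\langle \cdot, \overline{\omega} \otimes \eta \rangle_{B, \C}$ vanishes on this subspace and hence factors through the Ext group. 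Writing $v_\mathcal{Z} \sim \mu v_\mathcal{B}$ modulo the kernel of the projection to Ext and invoking Lem. \ref{pairing1} gives
$$
\langle v_\mathcal{Z}, \overline{\omega} \otimes \eta \rangle_{B, \C} \sim \mu \langle v_\mathcal{B}, \overline{\omega} \otimes \eta \rangle_{B, \C} \sim \mu (2\pi i)^{-6},
$$
so $\mu \sim (2\pi i)^{6} \langle v_\mathcal{Z}, \overline{\omega} \otimes \eta \rangle_{B, \C}$.

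Next I would convert from $\mathcal{B}$ to $\mathcal{D}$ via Def. \ref{deligne-qs}. Under $m(\Pi_\infty^H \otimes \Pi_f) = 0$ each of $M_B(\Pi_f)^\pm$ and $M_B(\sigma_f)^\pm$ is an $E$-line, so $\dim_E M_B(\Pi_f \times \sigma_f)^+ = 2$, and since $F_\infty$ acts by $-1$ on $\Q(3)$ this equals $\dim_E M_B(\Pi_f \times \sigma_f, 3)^-$. Combined with $\delta(\Pi_f \times \sigma_f, 3) \sim (2\pi i)^{4}$ from Prop. \ref{delta}, Def. \ref{deligne-qs} yields
$$
\mathcal{D}(\Pi_f \times \sigma_f) \sim (2\pi i)^{2} (2\pi i)^{-4} \mathcal{B}(\Pi_f \times \sigma_f) = (2\pi i)^{-2} \mathcal{B}(\Pi_f \times \sigma_f),
$$
i.e. $\mathcal{B}(\Pi_f \times \sigma_f) \sim (2\pi i)^{2} \mathcal{D}(\Pi_f \times \sigma_f)$. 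Substituting this and the value of $\mu$ into $\mathcal{Z} = \mu \mathcal{B}$ gives the desired identity
$$
\mathcal{Z}(\Pi_f \times \sigma_f) \sim (2\pi i)^{6} \langle v_\mathcal{Z}, \overline{\omega} \otimes \eta \rangle_{B, \C} \cdot (2\pi i)^{2} \mathcal{D}(\Pi_f \times \sigma_f) = (2\pi i)^{8} \langle v_\mathcal{Z}, \overline{\omega} \otimes \eta \rangle_{B, \C} \mathcal{D}(\Pi_f \times \sigma_f).
$$

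There is no substantive obstacle: the rank-one property, the Hodge-type vanishing that makes the pairing descend to Ext, the value of $\delta$, and the pairing with $v_\mathcal{B}$ are all already in hand. The only delicate point is the bookkeeping of powers of $2\pi i$, coming from the Tate twist in $\mathcal{D}$, from Prop. \ref{delta}, and from the Betti/de Rham duality implicit in Lem. \ref{pairing1}.
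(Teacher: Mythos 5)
Your proposal is correct and follows essentially the same route as the paper: the paper likewise writes $\overline{v}_\mathcal{Z} = \lambda\,\overline{v}_\mathcal{B}$ using the rank-one property of the Ext group, determines $\lambda$ by pairing against $\overline{\omega}\otimes\eta$ via Lem.~\ref{pairing1}, and converts $\mathcal{B}$ to $\mathcal{D}$ via Def.~\ref{deligne-qs} and Prop.~\ref{delta}. You merely spell out a few supporting points (the Hodge-type $(2,2)$ computation of $\overline{\omega}\otimes\eta$ and the count $\dim_E M_B(\Pi_f\times\sigma_f,3)^- = 2$) that the paper's terse proof leaves implicit.
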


\begin{proof} Recall that, by definition
$$
\mathcal{D}(\Pi_f \times \sigma_f) = (2\pi i)^{\dim_{E}M_B(\Pi_f \times \sigma_f, 3)^-} \delta(\Pi_f \times \sigma_f, 3)^{-1} \mathcal{B}(\Pi_f \times \sigma_f) = (2 \pi i)^{-2} \mathcal{B}(\Pi_f \times \sigma_f)
$$
where the last equality follows from Prop. \ref{delta}. Let $\overline{v}_\mathcal{Z}$ and $\overline{v}_\mathcal{B}$ be the images of $v_\mathcal{Z}$ and $v_\mathcal{B}$ by the third map of the short exact sequence of Prop. \ref{shortexact}. There exists $\lambda \in \R \otimes_{\Q} E$ such that $\overline{v}_\mathcal{Z}=\lambda \overline{v}_\mathcal{B}$ and then $$\mathcal{Z}(\Pi_f \times \sigma_f) = \lambda \mathcal{B}(\Pi_f \times \sigma_f) = (2\pi i)^2 \lambda \mathcal{D}(\Pi_f \times \sigma_f).$$ Pairing with $\overline{\omega} \otimes \eta$ and using Lem. \ref{pairing1}, we obtain 
$$
\langle v_\mathcal{Z} , \overline{\omega} \otimes \eta \rangle=\lambda \langle v_\mathcal{B}, \overline{\omega} \otimes \eta \rangle \sim (2 \pi i)^{-6} \lambda.
$$
\end{proof}

\section{Zeta integrals} \label{zetaint}

\subsection{The global integral} \label{section-zeta}  Let $\psi: \Q \backslash \A \rightarrow \C^\times$ be the non-trivial additive character characterized by $\psi(x)=e^{2\pi i x}$ for $x \in \R$. We consider the maximal unipotent subgroup $N \subset \mrm{GSp}(4)$ defined by
\begin{equation} \label{unipotent}
N=\left\{ n(x_0, x_1, x_2, x_3)=\begin{pmatrix}
1 & x_0 &  & \\
 & 1 &  & \\
 &  & 1 & \\
 &  & -x_0 & 1\\
\end{pmatrix}\begin{pmatrix}
1 & & x_1 & x_2 \\
 & 1 & x_2 & x_3\\
 &  & 1 & \\
 &  &  & 1\\
\end{pmatrix}, x_0, x_1, x_2, x_3 \in \mathbb{G}_a \right\}
\end{equation}
and the character $\psi_N: N(\Q) \backslash N(\A) \rightarrow \C$ defined by
$
\psi_N(n(x_0, x_1, x_2, x_3))=\psi(-x_0-x_3).
$\\

Let $\Pi=\bigotimes'_v \Pi_v$ be an irreducible cuspidal automorphic representation of $\mrm{GSp}(4, \A)$. The global Whittaker function $W_\Psi$ on $\mrm{GSp}(4, \A)$ attached to a cusp form $\Psi \in \Pi$ is
\begin{equation} \label{global-whittaker}
W_\Psi(g)=\int_{N(\Q)\backslash N(\A)} \Psi(ng) \psi_N(n^{-1})dn.
\end{equation}
Assume that the global Whittaker function $W_\Psi$ does not vanish for some cusp form $\Psi \in \Pi$. This assumption implies that for each place $v$, the representation $\Pi_v$ of $\mrm{GSp}(4 \Q_v)$ can be realized as a subspace of
$$
\{W: \mrm{GSp}(4, \Q_v) \rightarrow \C \,|\, \text{smooth}\,,\,W(ng)=\psi_N(n)W(g),\,\forall(n, g) \in N(\Q_v) \times \mrm{GSp}(4, \Q_v)\}.
$$
We denote this subspace by $W(\Pi_v, \psi_v)$ and call it the local Whittaker model of $\Pi_v$. If $\sigma$ is a cuspidal automorphic representation of $\mrm{GL}(2, \A)$ and if $\Phi \in \sigma$ is a cusp form, the global Whittaker function on $\mrm{GL}(2, \A)$ attached to $\Phi$ is
$$
W_\Phi(g)=\int_{\Q \backslash \A} \Phi \left( \begin{pmatrix}
1 & x\\
  & 1\\
\end{pmatrix} g\right) \psi(-x)dx.
$$
It is well-known that $W_\Phi$ does not vanish for $\Phi \neq 0$. The local Whittaker model $W(\sigma_v, \psi_v)$ of $\sigma_v$ is defined similarly as above.\\

Let us now introduce Eisenstein series on $\mrm{GL}(2, \A)$. Let $d t_\infty$ be the Lebesgue measure on the additive group $\mbb{R}$. If $v$ is a non-archimedean place of $\mbb{Q}$, let $d t_v$ be the Haar measure on $\Q_v$ for which $\mbb{Z}_v$ has volume one. Let $d^\times t_v$ be the Haar measure on $\mbb{Q}_v^\times$ defined by
$$
d^\times t_v = \left\{
\begin{array}{ll}
        \frac{dt_v}{|t_v|} & \mbox{if } v \mbox{ is archimedean},\\
        \frac{p}{p-1} \frac{dt_v}{|t_v|} & \mbox{if } v \mbox{ is } p\mbox{-adic}.
\end{array}
\right.
$$
Let $d t$, resp. $d^\times t$, denote the product measure $\prod_v d t_v$ on $\mbb{A}$, resp. $\prod_v d^\times t_v$ on $\mbb{A}^\times$. Let $\mathcal{S}(\A^2)$ be the space of Schwartz-Bruhat functions on $\A^2$. For $\varphi \in \mathcal{S}(\A^2)$, let us define the global Jacquet section
$$
f_\varphi(s, h_1)=|\det h_1|^s\int_{\A^\times} \varphi((0,t)h_1)|t|^{2s} d^\times t
$$
for $h_1 \in \mrm{GL}(2, \A)$. This integral converges for $\re(s)>1/2$. We can define the Eisenstein series
$$
E(s, h_1, f_\varphi)=\sum_{\gamma \in B(\Q) \backslash \mrm{GL}(2, \Q)} f_\varphi(s, \gamma h_1)
$$
which converges absolutely and uniformly on every compact subset in $\re(s)>1$ except for the poles of $f_\varphi(s,h_1)$ and is continued to a meromorphic function on the whole complex plane. The global zeta integral we are interested in is defined as follows: for $\Psi \in \Pi$, $\Phi \in \sigma$ and $\varphi \in \mathcal{S}(\A^2)$ let 
$$
\mathcal{Z}(s, \Psi, \Phi, f_\varphi)=\int_{Z(\A)H(\Q) \backslash H(\A)} \Psi(h)\Phi(h_2)E(s, h_1, f_\varphi) dh.
$$
Here and in what follows, we regard the group $H$ as embedded in $\mrm{GSp}(4)$ via $p\circ \iota$, where $p: G \rightarrow \mrm{GSp}(4)$ is the first projection. This integral converges absolutely except for the poles of the Eisenstein series and defines a meromorphic function in $s \in \C$.

\begin{pro} \label{residue} The function $s \mapsto \mathcal{Z}(s, \Psi, \Phi, f_\varphi)$ is holomorphic except for possible simple poles at $s=1$ and $0$. Moreover, we have
$$
\Res_{s=1} \mathcal{Z}(s, \Psi, \Phi, f_\varphi)=  \frac{\widehat{\varphi}(0)}{2} \int_{Z(\A) H(\Q) \backslash H(\A)} \Psi(h) \Phi(h_2) dh
$$
where $$\widehat{\varphi}(0)=\int_{\A^2} \varphi(s,t)dsdt.$$
\end{pro}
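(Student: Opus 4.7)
The plan is to reduce the analytic behavior of $\mathcal{Z}(s, \Psi, \Phi, f_\varphi)$ to that of the Eisenstein series $E(s, h_1, f_\varphi)$, exploiting the rapid decrease of the cusp forms $\Psi$ and $\Phi$.

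First I would recall the classical analytic theory of the Eisenstein series built from the Jacquet section $f_\varphi$. A standard Bruhat-cell unfolding combined with Poisson summation on $\Q^2 \subset \A^2$ (Tate-thesis style) provides the meromorphic continuation and functional equation of $E(s, h_1, f_\varphi)$, and shows that, as a function of $s$, the only possible singularities are simple poles at $s = 0$ and $s = 1$. Moreover, the residue at $s=1$ is a constant in $h_1$, equal to
$$
\Res_{s=1} E(s, h_1, f_\varphi) = \tfrac{1}{2}\,\widehat{\varphi}(0),
$$
with $\widehat{\varphi}(0)=\int_{\A^2}\varphi(x,y)\,dx\,dy$. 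The factor $1/2$ depends on the normalizations chosen for the measures and for $f_\varphi$, and must be verified carefully in the author's conventions.

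Next I would check that the outer integration against $\Psi$ and $\Phi$ is harmless. Since $\Psi$ and $\Phi$ are cusp forms, they are rapidly decreasing in Siegel domains, while $E(s,\cdot,f_\varphi)$ has uniformly moderate growth on compact subsets of $s$ avoiding $\{0,1\}$. Combining these estimates yields absolute convergence of
$$
\int_{Z(\A)H(\Q)\backslash H(\A)} \Psi(h)\Phi(h_2) E(s, h_1, f_\varphi)\, dh
$$
on such compact subsets, so $\mathcal{Z}(s, \Psi, \Phi, f_\varphi)$ extends meromorphically to $\C$ with at most simple poles at $s=0$ and $s=1$.

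Finally, to compute the residue at $s = 1$, I would write $E(s, h_1, f_\varphi) = \frac{c}{s-1}+E^*(s, h_1)$ near $s=1$ with $c = \widehat{\varphi}(0)/2$ constant in $h_1$ and $E^*$ holomorphic of moderate growth. Dominated convergence then permits the exchange of $\Res_{s=1}$ with the $h$-integration, yielding
$$
\Res_{s=1}\mathcal{Z}(s, \Psi, \Phi, f_\varphi) = \frac{\widehat{\varphi}(0)}{2}\int_{Z(\A)H(\Q)\backslash H(\A)} \Psi(h)\Phi(h_2)\,dh,
$$
as required. The main obstacle is really just a bookkeeping one: pinning down the constant $1/2$ in the residue of the Eisenstein series in the precise normalizations of measures and Jacquet section chosen here. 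Once that is verified, the rest of the argument is standard and formal.
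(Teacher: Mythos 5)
Your sketch is mathematically sound and reproduces the standard Rankin--Selberg argument behind this kind of residue formula, whereas the paper's proof is simply a citation to Prop.~3.1 of Moriyama's paper \cite{moriyama1} (with a remark that the constant there specializes to $1/2$ over $\Q$). So you are not taking a genuinely different route; you are spelling out what the cited reference establishes. The ingredients you identify are the right ones: Tate-thesis/Poisson-summation analysis of the Eisenstein series $E(s,h_1,f_\varphi)$ giving meromorphic continuation with at most simple poles at $s=0,1$ and constant residue $\tfrac12\widehat{\varphi}(0)$ at $s=1$, followed by an exchange of residue and $h$-integration justified by rapid decay of the cusp forms against moderate growth of $E^*(s,h_1)$.

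One point worth making explicit, since you pass over it in a single phrase (``$\Psi$ and $\Phi$ are rapidly decreasing in Siegel domains''): the integral is over $Z(\A)H(\Q)\backslash H(\A)$, and $\Psi$ enters via restriction along $p\circ\iota\colon H \hookrightarrow \mrm{GSp}(4)$. Rapid decay of $\Psi$ on Siegel domains of $\mrm{GSp}(4,\A)$ does not \emph{a priori} give rapid decay of $\Psi|_H$ on Siegel domains of $H(\A)$; one needs the embedding to be well positioned with respect to maximal split tori and Siegel sets, which it is for Novodvorsky's integral. This verification (and the resulting absolute convergence of the outer integral uniformly on compacta in $s$ away from the poles of $E$) is precisely what Moriyama's Prop.~3.1, and Soudry's treatment before it, take care of. Modulo that check, and the bookkeeping of the constant $1/2$ that you flag, your argument is the proof.
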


\begin{proof} This is a particular case of \cite{moriyama1} Prop. 3.1. Note that the constant $c$ appearing in the statement of loc. cit., Prop. 3.1 is equal to $\frac{1}{2}$ when the ground field is $\Q$.
\end{proof}

Let us suppose that the cusp forms $\Psi=\bigotimes'_v \Psi_v$ and $\Phi=\bigotimes_v' \Phi_v$ are factorizable. Then, the local multiplicity one property implies that $W_\Psi$ and $W_\Phi$ are decomposed into a product of local Whittaker functions:
\begin{eqnarray*}
W_\Psi(g) &=& \prod_v W_{\Psi_v}(g_v), g \in \mrm{GSp}(4, \A)\\
W_\Phi(h_2) &=& \prod_v W_{\Phi_v}(h_{2, v}), h_2 \in \mrm{GL}(2, \A).
\end{eqnarray*}
Moreover, let us assume that the Schwartz-Bruhat function $\varphi=\prod_v \varphi_v$ is factorizable. Then the global Jacquet section $f_\varphi$ factorizes as $f_\varphi(s,h_1)=\prod_v f_{\varphi_v}(s, h_{1,v})$ where
$$
f_{\varphi_v}(s, h_{1,v})= |\det h_{1,v}|_v^s \int_{\Q_v^\times} \varphi_v((0, t_v) h_{1,v}) |t_v|_v^{2s} d^\times t_v.
$$
For each place $v$ of $\Q$, we define the local zeta integral $\mathcal{Z}_v(s, W_{\Psi_v}, W_{\Phi_v}, f_{\varphi_v})$ by
$$
\mathcal{Z}_v(s, W_{\Psi_v}, W_{\Phi_v}, f_{\varphi_v})=\int_{Z(\Q_v) N_H(\Q_v) \backslash H(\Q_v)}W_{\Psi_v}(h_v)W_{\Phi_v}(h_{2,v}) f_{\varphi_v}(s, h_{1,v})dh_v
$$
where $N_H$ denotes the maximal unipotent subgroup of $H$ defined as $N_H=N \cap H$. 

\begin{pro}\cite{moriyama1} Prop. 3.2. Suppose that $\mathcal{Z}_\infty(s, W_{\Psi_\infty}, W_{\Phi_\infty}, f_{\varphi_\infty})$ converges absolutely for $\re(s)>e_\infty$. Then, the integral
$$
\int_{Z(\A) N_H(\A) \backslash H(\A)} W_\Psi(h) W_\Phi(h_2) f_\varphi(s, h_1) dh
$$
converges absolutely for $\re(s)>\max\{3, e_\infty\}$ and is equal to $\mathcal{Z}(s, \Psi, \Phi, f_\varphi)$.
\end{pro}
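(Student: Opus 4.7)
The proof is by the classical Rankin-Selberg unfolding. The assertion splits into two parts: the equality of the two integrals, and the convergence of the unfolded integral.

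For the equality, one starts in the range $\re(s) > 1$ where $E(s, h_1, f_\varphi)$ converges absolutely. Substituting the defining sum $E(s, h_1, f_\varphi) = \sum_{\gamma \in B(\Q) \backslash \mrm{GL}(2, \Q)} f_\varphi(s, \gamma h_1)$ into the definition of $\mathcal{Z}(s, \Psi, \Phi, f_\varphi)$ and exchanging sum and integral by Fubini, one obtains
$$\mathcal{Z}(s, \Psi, \Phi, f_\varphi) = \int_{Z(\A) P(\Q) \backslash H(\A)} \Psi(h) \Phi(h_2) f_\varphi(s, h_1) \, dh,$$
where $P = B \times_{\Gm} \mrm{GL}(2) \subset H$ is the parabolic whose first projection is the standard Borel of $\mrm{GL}(2)$; the identification of coset spaces rests on the bijection $P(\Q) \backslash H(\Q) \simeq B(\Q) \backslash \mrm{GL}(2, \Q)$ induced by the first projection, which is well-defined thanks to the relation $\det h_1 = \det h_2$. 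I would then open the $P(\Q)$-quotient all the way to the maximal unipotent $N_H(\A) \subset H(\A)$ by two successive Fourier expansions: first, the Fourier expansion of $\Phi$ along the upper unipotent of the second $\mrm{GL}(2)$ (using the cuspidality of $\sigma$) replaces $\Phi$ by $W_\Phi$ and opens the second factor; second, the Fourier expansion of $\Psi$ along the unipotent radical $N$ of the Borel of $\mrm{GSp}(4)$ (using the cuspidality of $\Pi$ together with its generic hypothesis to isolate precisely the coefficient with respect to $\psi_N$) replaces $\Psi$ by $W_\Psi$. All degenerate Fourier coefficients vanish by cuspidality. This yields the claimed identity in the overlapping region of absolute convergence.

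For the convergence of the unfolded integral on $\re(s) > \max\{3, e_\infty\}$, one uses the Iwasawa decomposition $H(\A) = N_H(\A) T_H(\A) K_H$ to reduce the integration over $Z(\A) N_H(\A) \backslash H(\A)$ to a torus integral against a compact factor. The archimedean contribution converges for $\re(s) > e_\infty$ by hypothesis. At the non-archimedean places, standard asymptotic bounds for the local Whittaker functions $W_{\Psi_v}$ and $W_{\Phi_v}$ on the torus, combined with the explicit decay of $f_{\varphi_v}(s, \cdot)$ in the Iwasawa coordinate, give absolute convergence for $\re(s) > 3$; this is a direct analogue of Jacquet's bound for the $\mrm{GL}(2) \times \mrm{GL}(2)$ Rankin-Selberg integral. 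Combining the two ranges gives the stated convergence, which then retroactively justifies the formal manipulations of the first step.

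The main obstacle is the Fourier expansion of $\Psi$ on $\mrm{GSp}(4)$: unlike for $\mrm{GL}(n)$, the generic expansion is not a straightforward mirabolic sum, and one must combine Fourier analysis along the unipotent radicals of the Siegel and Klingen parabolics together with cuspidality to isolate the Whittaker term. The fact that, after restriction to $H \subset \mrm{GSp}(4) \times \mrm{GL}(2)$, this expansion collapses to precisely the shape matching the unfolded Eisenstein integral is the content of Moriyama's computation, to which we appeal.
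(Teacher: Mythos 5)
The paper does not supply its own proof of this statement; it is cited verbatim from Moriyama [Mo09, Prop.\ 3.2], so there is no in-paper argument against which to compare your sketch. That said, your reconstruction is a plausible account of the standard Rankin--Selberg unfolding: the bijection $P(\Q)\backslash H(\Q)\simeq B(\Q)\backslash\mrm{GL}(2,\Q)$ via the first projection, using $\det h_1=\det h_2$, is correct, and you correctly locate the real work in the Fourier expansion of the $\mrm{GSp}(4)$ cusp form restricted to $H$ and in the non-archimedean convergence estimate.

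One caution. The sentence ``All degenerate Fourier coefficients vanish by cuspidality'' is too strong as applied to $\Psi$ on $\mrm{GSp}(4)$: cuspidality alone does not annihilate the non-generic Fourier coefficients along $N$ (if it did, every cuspidal $\Pi$ would be generic). What actually happens in the unfolding is a combination of cuspidality (to kill constant terms along the unipotent radicals of the Siegel and Klingen parabolics) and orthogonality of additive characters against $\psi_N$ restricted to $N_H$, so that only the $\psi_N$-coefficient survives the integration. Your closing paragraph essentially acknowledges this, but it is in tension with the earlier blanket assertion, and the distinction is exactly where Moriyama's argument does its work. The claimed threshold $\re(s)>3$ for non-archimedean convergence is also asserted by analogy rather than derived, but since the proposition is being invoked as a black box from [Mo09], that is acceptable at this level of detail.
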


This proposition implies that
$$
\mathcal{Z}(s, \Psi, \Phi, f_\varphi)=\prod_v \mathcal{Z}_v(s, W_{\Psi_v}, W_{\Phi_v}, f_{\varphi_v})
$$
for any complex number $s$ such that $\re(s)> \max\{3, e_\infty\}$. Let $V$ be a finite set of places satisfying the following condition: if $v \notin V$, then $v$ is a finite place such that $\psi_v$ is unramified, the representations $\Pi_v$, resp. $\sigma_v$, has a vector fixed by $\mrm{GSp}(4, \Z_v)$, resp. $\mrm{GL}(2, \Z_v)$ and $\varphi_v \in \mathcal{S}(\Q_v^2)$ is the characteristic function $\varphi_{v,0}$ of $\Z_v^2$. Fix a place $v \notin V$. Let $W_0 \in W(\Pi_v, \psi_v)$ and $W'_0 \in W(\sigma_v, \psi_v)$ be the $\mrm{GSp}(4, \Z_v)$-fixed local Whittaker function normalized so that $W_0(I_4)=1$ and the $\mrm{GL}(2, \Z_v)$-fixed Whittaker function normalized so that $W'_0(I_2)=1$, respectively. Then, it is proved in section 3.4 of \cite{moriyama1} that
$$
 \mathcal{Z}_v(s, W_0, W'_0, f_{\varphi_{v,0}})=L(s, \Pi_v \times \sigma_v)
$$
where the right-hand side is the Langlands degree eight local $L$-factor. As a consequence, for $\re(s)>\max\{3, e_\infty\}$, we have
$$
\mathcal{Z}(s, \Psi, \Phi, f_\varphi)=\prod_{v \in V}\mathcal{Z}_v(s, W_{\Psi_v}, W_{\Phi_v}, f_{\varphi_v}) L_V(s, \Pi \times \sigma).
$$
In the follwing result, we denote by $\mathcal{S}(\Q_v^2)$ the space of $\C$-valued locally constant compactly supported functions on $\Q_v^2$.

\begin{pro} \label{ram-int-soudry} \cite{soudry} section 2. Let $v$ be a non-archimedean place. There exists a unique polynomial $P_v(X) \in \C[X]$ such that $P_v(0)=1$ and that the $\C$-vector space generated by the $\mathcal{Z}_v(s, W_{\Psi_v}, W_{\Phi_v}, f_{\varphi_v})$ for $W_{\Psi_v} \in W(\Pi_v, \psi_v)$, $W_{\Phi_v} \in W(\sigma_v, \psi_v)$ and $ \varphi_v \in \mathcal{S}(\Q_v^2)$ is $P_v(p^{-s}) \C[p^{-s}, p^s]$.
\end{pro}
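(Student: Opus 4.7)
The plan is to follow the standard Rankin--Selberg template (as carried out by Soudry in the cited reference), namely to show that the collection of local zeta integrals, viewed as rational functions in $p^{-s}$, forms a nonzero fractional ideal of the Laurent polynomial ring $\C[p^{-s}, p^s]$, and then to invoke the principal ideal theorem.

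First I would establish absolute convergence of $\mathcal{Z}_v(s, W_{\Psi_v}, W_{\Phi_v}, f_{\varphi_v})$ in a right half-plane $\re(s)>s_0$. This relies on Jacquet's finite-function asymptotics for Whittaker functions on the diagonal torus of $H(\Q_v)$, combined with the explicit behavior of $f_{\varphi_v}(s, h_{1,v})$ produced by the Iwasawa decomposition $H(\Q_v)=N_H(\Q_v) T(\Q_v) K_v$. After reducing to the torus, the integrand becomes a finite $\C$-linear combination of terms of the form $|t|_v^{s+c} \chi(t) \Phi_0(t)$, where $\chi$ is a finite character of $T(\Q_v)$ and $\Phi_0 \in \mathcal{S}(\Q_v^n)$; each such term is a local Tate-type integral, convergent for large $\re(s)$ and lying in $\C(p^{-s})$. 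This shows that $\mathcal{Z}_v(s, W_{\Psi_v}, W_{\Phi_v}, f_{\varphi_v}) \in \C(p^{-s})$ for every choice of data, and defines a $\C$-subspace $\mathcal{I}_v \subset \C(p^{-s})$.

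Second, I would prove that $\mathcal{I}_v$ is stable under multiplication by $\C[p^{-s}, p^s]$. Multiplication by $p^{\pm s}$ can be realized by acting on the test data by suitably chosen diagonal elements of $H(\Q_v)$ (for instance $\mathrm{diag}(\varpi, 1, \varpi, 1)$ with $\varpi$ a uniformizer): right translation in the integration variable is compensated by a translation of $W_{\Psi_v}$, $W_{\Phi_v}$, and of $\varphi_v$ (the latter being a Schwartz--Bruhat function, the translate remains in $\mathcal{S}(\Q_v^2)$), producing a pure power of $p^{-s}$ multiplying the original integral. Non-vanishing of $\mathcal{I}_v$ is then witnessed by the unramified computation of \cite{moriyama1}, giving the explicit $L$-factor $L(s, \Pi_v \times \sigma_v)$ for spherical data; alternatively, Jacquet--Shalika positivity type arguments provide nonzero integrals at any place.

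Finally, since $\C[p^{-s}, p^s]$ is a PID, any nonzero fractional ideal inside $\C(p^{-s})$ is principal, determined up to a unit, that is up to a nonzero monomial in $p^{-s}$. Writing a generator of $\mathcal{I}_v$ as $P_v(p^{-s})$ times a unit (in the convention of the statement) and normalizing by $P_v(0)=1$ pins down both existence and uniqueness of $P_v(X) \in \C[X]$. The main obstacle is the stability step: one must verify that translation of $W_{\Psi_v}$, $W_{\Phi_v}$, and $\varphi_v$ by well-chosen elements of $H(\Q_v)$ exactly implements multiplication by monomials in $p^{\pm s}$ without introducing uncontrolled corrections; the delicate coordination is between the two Whittaker models and the Jacquet section, and is the technical heart of Soudry's treatment that my proof would rely on.
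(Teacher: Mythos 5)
Your proposal follows the standard Rankin--Selberg ideal argument, which is exactly the argument of \cite{soudry} that the paper cites for this proposition (the paper itself gives no proof beyond the reference to Soudry's Section 2). The four steps you lay out --- rationality in $p^{-s}$ via Jacquet's finite-function asymptotics on the torus and Iwasawa decomposition, stability of the span under $\C[p^{-s},p^{\pm s}]$-multiplication, non-vanishing via the unramified computation, and the PID argument with the normalization $P_v(0)=1$ to pin down uniqueness up to units $c\,p^{ns}$ --- are precisely the ingredients of Soudry's treatment, and you correctly flag the stability step (coordinating translations of the two Whittaker functions and the Jacquet section) as the technical heart delegated to the reference.

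Two small remarks. First, to conclude via the PID argument you also need the span $\mathcal{I}_v$ to admit a \emph{uniform common denominator} (so that it is genuinely a fractional ideal, hence finitely generated); this comes from the observation that the characters and exponents appearing in Jacquet's finite-function expansion of the Whittaker functions on the torus depend only on $\Pi_v$ and $\sigma_v$, not on the chosen vectors, so the Tate-type integrals all land in $\frac{1}{Q(p^{-s})}\C[p^{-s},p^s]$ for a single polynomial $Q$; you use this implicitly but it deserves to be said. Second, be aware that the convention as literally written in the proposition ($\mathcal{I}_v = P_v(p^{-s})\C[p^{-s},p^s]$) is not the one your argument --- or Soudry's --- actually produces: one obtains $\mathcal{I}_v = \frac{1}{P_v(p^{-s})}\C[p^{-s},p^s]$, consistently with the subsequent definition $L_v(s,\Pi\times\sigma)=P_v(p^{-s})^{-1}$ in the paper and with the use in Prop.~\ref{ram-int} of the ratio $\mathcal{Z}_v/L_v$ being a Laurent polynomial. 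Your parenthetical ``in the convention of the statement'' inherits that slip, but your mathematics is the correct version.
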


For any non-archimedean place $v \in V$ let $L_v(s, \Pi \times \sigma)=P_v(p^{-s})^{-1}$.

\subsection{Archimedean computation} The result of this section is a particular case of \cite{moriyama1}, that we wish to report for convenience of the reader. First we need to fix the notation for our archimedean representations.\\

Recall that we denote by $\sigma_\infty^{1,0}$, resp. $\sigma_\infty^{0,1}$, the holomorphic, resp. antiholomorphic, discrete series of $\mrm{GL}(2, \R)_+$ with the same central and infinitesimal characters as the trivial representation. With the notation of section 1.1 (i) of loc. cit., we have
\begin{eqnarray*}
\sigma_\infty^{1,0}|_{\mrm{SL}(2,\R)} &=& D_{2},\\
\sigma_\infty^{0,1}|_{\mrm{SL}(2,\R)} &=& D_{-2}.
\end{eqnarray*}
Recall also that we consider the discrete series $\Pi_\infty^{2,1}$ of $\mrm{GSp}(4, \R)_+$ with trivial central character which contains with multiplicity one the irreducible $\C[K_\infty]$-module $\tau_{(3,-1)}$ as a minimal $K_\infty$-type. In other words $\Pi_\infty^{2,1}$ has Blattner parameter $(3, -1)$ and the discrete series $\Pi_\infty^{1,2}$ has Blattner parameter $(1,-3)$. In the notation of the section 1.2 (i) of loc. cit. we have
\begin{eqnarray*}
\Pi_\infty^{2,1}|_{\mrm{Sp}(4,\R)} &=& D_{(3,-1)},\\
\Pi_\infty^{1,2}|_{\mrm{Sp}(4,\R)} &=& D_{(1,-3)}.
\end{eqnarray*}
Let $\Psi_\infty$ denote a highest weight vector of the minimal $K_\infty$-type of $\Pi_\infty^{2,1}$. Then $\overline{\Psi}_\infty$, which is defined as in \cite{lemma2} Rem. 3.2, is a highest weight vector of the minimal $K_\infty$-type of $\Pi_\infty^{1,2}$. Let $\Phi_\infty$ denote a generator of the minimal $L_\infty$-type of $\sigma_\infty$. Let us fix an isomorphism $\Pi_\infty^W \simeq W(\Pi_\infty^W, \psi_\infty)$ and let $W_{X_{(-1,1)}\overline{\Psi}_\infty}$ be the image of $X_{(-1,1)}\overline{\Psi}_\infty$ under this isomorphism. Similarly let $W_{\Phi_\infty}$ denote the image of $\Phi_\infty$ under a fixed isomorphism $\sigma_\infty \simeq W(\sigma_\infty, \psi_\infty)$.

\begin{pro} \label{archi-comp} Let $\Gamma_{\C}(s)$ denote the function $\Gamma_{\C}(s)=2(2\pi)^{-s}\Gamma(s)$. Let $\varphi_\infty \in \mathcal{S}(\R^2)$ be the Schwartz-Bruhat function defined as 
$
\varphi_\infty(x,y)=\exp(-\pi(x^2+y^2)).
$ Normalize the functions $W_{X_{(-1,1)}\overline{\Psi}_\infty}$ and $W_{\Phi_\infty}$ by $W_{X_{(-1,1)}\overline{\Psi}_\infty}(I_4)=1$ (see \cite{moriyama1} (5.4)) and $W_{\Phi_\infty}(I_2)=c_2$, where $I_4$ and $I_2$ denote the identity matrices of size $4$ and $2$ and where $c_2$ is the unique non-zero complex number such that the constant $C$ appearing in \cite{moriyama1} (5.11) is equal to $1$. Then, for any $\re(s)>e_\infty$, we have
$$
\mathcal{Z}_\infty(s, W_{X_{(-1,1)}\overline{\Psi}_\infty}, W_{\Phi_\infty}, f_{\varphi_\infty})= \Gamma_{\C}\left(s+2\right) \Gamma_{\C}\left(s+1\right)^2 \Gamma_{\C}\left(s\right).
$$
\end{pro}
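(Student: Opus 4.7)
The proof plan is to reduce this statement to a direct citation of Moriyama's archimedean computation in \cite{moriyama1}, with the main work being to match the choice of Whittaker vectors and normalizations between the two papers. First, I would set up the dictionary: Moriyama works at the level of the restricted representations on $\mrm{Sp}(4, \R)$ and $\mrm{SL}(2, \R)$, where our $\Pi_\infty^W|_{\mrm{Sp}(4, \R)}$ decomposes as $D_{(3,-1)} \oplus D_{(1,-3)}$ in his notation and $\sigma_\infty|_{\mrm{SL}(2, \R)}$ decomposes as $D_2 \oplus D_{-2}$. Since the local zeta integral only sees the ambient representations (the integrand is determined by the Whittaker models), it suffices to fix compatible vectors in each component and sum.

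Next I would analyze the vector $X_{(-1,1)} \overline{\Psi}_\infty$. Since $\overline{\Psi}_\infty \in \Pi_\infty^{1,2}$ is a highest weight vector of the minimal $K_\infty$-type $\tau_{(1,-3)}$ and $X_{(-1,1)} \in \mathfrak{k}$ is a root vector raising the weight, a weight computation shows that $X_{(-1,1)} \overline{\Psi}_\infty$ is a nonzero vector in $\Pi_\infty^{1,2}$ of $K_\infty$-weight $(0,-2)$. This is precisely (up to a rational scalar that is absorbed by the $\sim$ relation used throughout, and which is here eliminated by the explicit normalization $W_{X_{(-1,1)}\overline{\Psi}_\infty}(I_4) = 1$) the vector whose Whittaker function is made explicit by Moriyama in the construction leading to his formula (5.4). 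The analogous matching for $W_{\Phi_\infty}$ is more straightforward, as $\Phi_\infty$ is a generator of the minimal $L_\infty$-type of $\sigma_\infty$ and the normalization $W_{\Phi_\infty}(I_2) = c_2$ is chosen tautologically so that the constant $C$ appearing in \cite{moriyama1} (5.11) becomes $1$.

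With these identifications in place, the final step is to quote the archimedean integral formula proved in section 5 of \cite{moriyama1}, which computes
$$
\mathcal{Z}_\infty(s, W_{X_{(-1,1)}\overline{\Psi}_\infty}, W_{\Phi_\infty}, f_{\varphi_\infty})
$$
for the Gaussian test function $\varphi_\infty$ as a product of four $\Gamma_{\C}$-factors, one for each root of the relevant eight-dimensional Langlands representation with shifts determined by the Blattner parameters. A brief check of the shifts $s+2, s+1, s+1, s$ against the infinitesimal character of the generic discrete series member of the $L$-packet $\{\Pi_\infty^W \times \sigma_\infty\}$ then yields the claimed formula.

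The main obstacle I anticipate is the bookkeeping in the first step: Moriyama's explicit Whittaker function is written in terms of a specific weight vector reached from the minimal $K$-type by a prescribed sequence of Lie algebra translations, and one must carefully verify that this vector agrees (up to an explicit constant that the normalization $W_{X_{(-1,1)}\overline{\Psi}_\infty}(I_4)=1$ absorbs) with $X_{(-1,1)} \overline{\Psi}_\infty$ rather than with some other weight vector in the same $K_\infty$-type. Once this identification is made, the rest is a direct appeal to \cite{moriyama1}.
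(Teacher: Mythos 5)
Your proposal is correct and takes the same approach as the paper: both reduce the computation to a direct citation of Moriyama's archimedean formula (5.11) in \cite{moriyama1}. The paper's own proof is simply a one-line observation that the claim is the special case $\lambda_1=3$, $\lambda_2=-1$, $l=2$ of that formula; your more detailed discussion of the weight bookkeeping (identifying $X_{(-1,1)}\overline{\Psi}_\infty$ as the weight-$(0,-2)$ vector in $\tau_{(1,-3)}$ and matching the normalizations to kill the constant $C$) is the unstated verification underlying that one-line citation.
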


\begin{proof} This is a particular case of the equality (5.11) in \cite{moriyama1} with $\lambda_1=3$, $\lambda_2=-1$, $l=2$.
\end{proof}

\begin{rems} It would be possible to determine $c_2$ explicitly by unravelling carefully the computations of \cite{moriyama1} 5.3. 
\end{rems}

\subsection{De Rham-Whittaker periods and rationality of $p$-adic integrals} \label{dR-W} In this section, we define de Rham-Whittaker periods and prove an algebraicity result for $p$-adic integrals. De Rham-Whittaker periods are analogous to the occult period invariant introduced in \cite{occult}, when the Bessel model is replaced by the Whittaker model. Note that in this section, we do not assume that $L(s, \Pi \times \sigma)$ has a pole at $s=1$.\\

\subsubsection{Definition of the periods} We start be defining an action of $\mrm{Aut}(\C)$ on the Whittaker model. Let $\Q(\mu_\infty)$ denote the extension of $\Q$ generated by roots of unity of arbitrary order. By choosing a compatible system of roots of unity, we obtain a natural morphism $\mrm{Gal}(\Q(\mu_\infty)/\Q) \rightarrow \widehat{\Z}^\times$ which in fact is an isomorphism. Let us denote by $\sigma \mapsto t_\sigma = \prod_p t_{\sigma, p}$ the following composite 
$$
\mrm{Aut}(\C) \rightarrow \mrm{Gal}(\overline{\Q}/\Q) \rightarrow \mrm{Gal}(\Q(\mu_\infty)/\Q) \rightarrow \widehat{\Z}^\times \simeq \prod_p \Z_p^\times
$$
Let $W \in W(\Pi_p, \psi_p)$ be a Whittaker function and let $\sigma \in \mrm{Aut}(\C)$. We define $^\sigma W$ by $^\sigma W(g)=\sigma(W(T_{\sigma, p} g))$ where $T_{\sigma, p} \in \mrm{GSp}(4, \Z_p)$ is defined by
$$
T_{\sigma, p}=\begin{pmatrix}
t_{\sigma, p}^{-3} & &  & \\
 &  t_{\sigma, p}^{-2} &  & \\
 &  & 1 & \\
 & &  & t_{\sigma, p}^{-1}\\
\end{pmatrix}
$$
Note that for any $x_0, x_1, x_2, x_3 \in \Q_p$, we have 
$$
T_{\sigma, p}n(x_0, x_1, x_2, x_3)T_{\sigma, p}^{-1}=n(t_{\sigma, p}^{-1}x_0, t_{\sigma, p}^{-3}x_1, t_{\sigma, p}^{-2}x_2, t_{\sigma, p}^{-1}x_3)
$$
where $n(x_0, x_1, x_2, x_3) \in N(\Q_p)$ is the element defined in (\ref{unipotent}). Using the fact that $\sigma(\psi_p(t_{\sigma, p}^{-1}x))=\psi_p(x)$ for any $x \in \Q_p$, this shows that $^\sigma W \in W(\Pi_p, \psi_p)$. The map $W \mapsto\,\!^\sigma W$ defines a $\sigma$-linear intertwining operator $W(\Pi_p, \psi_p) \rightarrow W(\,\!^\sigma \Pi_p, \psi_p)$. Let $W(\Pi, \psi)=W(\Pi_\infty, \psi_\infty) \otimes W(\Pi_f, \psi_f)$ be the Whittaker model of $\Pi$. Similarly as above, we define a $\sigma$-linear intertwining operator $\tilde{\sigma}: W(\Pi_f, \psi_f) \rightarrow W(\,\!^\sigma \Pi_f, \psi_f)$. Note that if $\Pi_p$ is unramified, then $W \mapsto \,\!^\sigma W$ sends a spherical vector to a spherical vector and if we normalize the spherical vector to take value $1$ on identity, then $W \mapsto \,\!^\sigma W$ fixes this vector. This makes the local and global actions of $\sigma$ compatible.\\

Applying the functor $X \mapsto \Hom_{K_\infty}(\bigwedge^2 \mathfrak{p}^+ \otimes \mathfrak{p}^-, X)$ to the isomorphism $\Pi \overset{\sim}{\rightarrow} W(\Pi, \psi)$ defined by $\Psi \mapsto W_\Psi$ and using the fact that $\Hom_{K_\infty}(\bigwedge^2 \mathfrak{p}^+ \otimes \mathfrak{p}^-, \Pi_\infty^W) = H^3(\mathfrak{gsp}_{4}, K_\infty, \Pi_\infty^{2,1})$ (see Prop. \ref{dimension} and the proof of Lem. \ref{diff-form}), we obtain the canonical isomorphism
$$
H^3(\mathfrak{gsp}_{4}, K_\infty, \Pi_\infty^{2,1}) \otimes \Pi_f \overset{\sim}{\rightarrow} \Hom_{K_\infty}\left(\bigwedge^2 \mathfrak{p}^+ \otimes \mathfrak{p}^-, W(\Pi_\infty, \psi_\infty) \right) \otimes W(\Pi_f, \psi_f).
$$
Recall that $\Pi_\infty^W|_{\mrm{GSp}(4, \R)_+}=\Pi_\infty^{2,1} \oplus \Pi_\infty^{1,2}$. Let $W_{\Psi_\infty} \in W(\Pi_\infty, \psi_\infty)$ be the image of a highest weight vector $\Psi_\infty$ of the minimal $K_\infty$-type of $\Pi_\infty^{2,1}$ under the isomorphism $\Pi_\infty^W \simeq W(\Pi_\infty, \psi_\infty)$ induced by the isomorphism $\Pi  \overset{\sim}{\rightarrow} W(\Pi, \psi)$ fixed above. We normalize $W_{\Psi_\infty}$ so that 
\begin{equation} \label{normalization}
W_{X_{(-1,1)}\overline{\Psi}_\infty}(I_4)=1
\end{equation}
as in Prop. \ref{archi-comp}. Like in Lem. \ref{diff-form}, the normalized element $W_{\Psi_\infty}$ defines a normalized generator of the one-dimensional $\C$-vector space $\Hom_{K_\infty}\left(\bigwedge^2 \mathfrak{p}^+ \otimes \mathfrak{p}^-, W(\Pi_\infty, \psi_\infty) \right)$. Hence we obtain an isomorphism
$$
i: H^3(\mathfrak{gsp}_{4}, K_\infty, \Pi_\infty^{2,1}) \otimes \Pi_f \overset{\sim}{\rightarrow} W(\Pi_f, \psi_f).
$$
The left hand term has a $E$-structure $H^{2,1}_{dR}(\Pi_f)$ given by coherent cohomology of automorphic vector bundles (see \cite{harris1} 3. and in particular Prop. 3.3.9 for details). The following result can be proved exactly as \cite{grobner-sebastian} Prop. 3.3.12 be replacing the Bessel model in loc. cit. by the Whittaker model.

\begin{pro} \label{dR-W-periods} There exists $p(\Pi) \in \C^\times$, whose image in $\C^\times/E^\times$ is uniquely defined, such that for any $\sigma \in \mrm{Aut}(\C/E)$ the diagram 
$$
\begin{CD}
W(\Pi_f, \psi_f) @>p(\Pi)^{-1} i^{-1}>> H^{2,1}_{dR}(\Pi_f) \otimes_E \C\\
@V\tilde{\sigma}VV                                                                         @V1 \otimes \sigma VV\\
W(\Pi_f, \psi_f) @>p(\Pi)^{-1} i^{-1}>> H^{2,1}_{dR}(\Pi_f) \otimes_E \C
\end{CD}
$$
commutes.
\end{pro}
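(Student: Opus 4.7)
The plan is to realize $p(\Pi)$ as a coboundary trivializing a naturally occurring $1$-cocycle $c : \mrm{Aut}(\C/E) \to \C^\times$, via a Schur's lemma plus Hilbert $90$ argument, exactly in the spirit of the Bessel-model argument of \cite{grobner-sebastian} Prop. 3.3.12. Fix $\sigma \in \mrm{Aut}(\C/E)$. Since $\Pi_f$ is defined over $E$ we have $^\sigma\Pi_f \simeq \Pi_f$ as $\C[\mrm{GSp}(4, \A_f)]$-modules, so $\tilde{\sigma}$ becomes a $\sigma$-semilinear $\mrm{GSp}(4, \A_f)$-equivariant automorphism of $W(\Pi_f, \psi_f)$. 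On the other side, $1 \otimes \sigma$ is visibly a $\sigma$-semilinear equivariant automorphism of $H^{2,1}_{dR}(\Pi_f) \otimes_E \C$. Transporting via $i$, the two maps $i^{-1} \circ \tilde{\sigma}$ and $(1 \otimes \sigma) \circ i^{-1}$ are thus both $\sigma$-semilinear $\mrm{GSp}(4, \A_f)$-equivariant isomorphisms $W(\Pi_f, \psi_f) \to H^{2,1}_{dR}(\Pi_f) \otimes_E \C$.

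The next step is Schur's lemma. By Shalika's local multiplicity one for Whittaker models, the $\C$-space of $\mrm{GSp}(4, \A_f)$-equivariant endomorphisms of $W(\Pi_f, \psi_f)$ is one-dimensional, so the set of $\sigma$-semilinear equivariant automorphisms is a $\C^\times$-torsor. Hence there is a unique $c(\sigma) \in \C^\times$ with
$$
i^{-1} \circ \tilde{\sigma} = c(\sigma) \cdot (1 \otimes \sigma) \circ i^{-1}.
$$
An elementary computation using $\widetilde{\sigma\tau} = \tilde{\sigma} \circ \tilde{\tau}$ and the $\sigma$-semilinearity of $1 \otimes \sigma$ yields the cocycle identity $c(\sigma\tau) = c(\sigma) \, \sigma(c(\tau))$.

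To conclude, I would run a descent step and invoke Hilbert $90$. The isomorphism $i$, together with the $E$-rational structure $H^{2,1}_{dR}(\Pi_f)$ and a fixed basis of a finite-dimensional slice of $\Pi_f$ (say vectors fixed by some deep enough compact open subgroup of $\mrm{GSp}(4, \A_f)$), are all defined over some finite Galois extension $E' \subset \C$ of $E$. Consequently $c$ factors through $\mrm{Gal}(E'/E)$, and classical Hilbert $90$ produces $p(\Pi) \in (E')^\times \subset \C^\times$ with $c(\sigma) = p(\Pi)/\sigma(p(\Pi))$. Writing $(1 \otimes \sigma)(p(\Pi)^{-1} x) = \sigma(p(\Pi))^{-1} (1 \otimes \sigma)(x)$ and substituting immediately gives the commutativity of the diagram. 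Uniqueness modulo $E^\times$ is automatic: any two valid choices $p,p'$ satisfy $p'/p \in (\C^\times)^{\mrm{Aut}(\C/E)} = E^\times$.

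The main technical obstacle is this descent step: one must argue carefully that, although $\Pi_f$ is infinite-dimensional, the scalar $c(\sigma)$ only depends on how a finite amount of data transforms under $\sigma$, so that the cocycle does factor through a finite Galois group and classical Hilbert $90$ applies. The required rationality inputs are precisely the same as in the Bessel-model case treated in \cite{grobner-sebastian}, so the argument transfers without essential change once the Whittaker multiplicity-one replaces the Bessel multiplicity-one.
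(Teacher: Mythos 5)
Your proposal is correct and takes essentially the same approach as the paper. The paper's own proof is a one-sentence reduction to \cite{grobner-sebastian} Prop.\ 3.3.12 with the Bessel model replaced by the Whittaker model, and your cocycle-plus-Hilbert-90 argument (using Whittaker multiplicity one in place of Bessel multiplicity one) is precisely the standard mechanism behind that cited result, so you are unpacking the same reference rather than giving a genuinely different route.
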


\begin{rems} Using the matrix 
$$
T'_{\sigma, p}=\begin{pmatrix}
t_{\sigma, p}^{-1} &  \\
 &  1\\
\end{pmatrix}
$$
instead of $T_{\sigma, p}$, a similar construction obviously exists for $\mrm{GL}(2)$ and we denote by $p(\sigma)$ the corresponding de Rham-Whittaker period attached to the Whittaker function $W_{\Phi_\infty}$ normalized as in Prop. \ref{archi-comp}. By the $q$-expansion principle, it could be possible to compute $p(\sigma)$ up to $E^\times$ multiples. In the case of the group $\mrm{GL}(2, F)$, where $F$ is a totally real quadratic number field, this kind of computation is performed in \cite{kings98} 3.5 . 
\end{rems}

\subsubsection{Rationality of local Rankin-Selberg integrals}

\begin{pro} \label{ram-int} Let $v$ be a non-archimedean place. Assume that $W_{\Psi_v}$ and $W_{\Phi_v}$ satisfy $\,\!^\sigma W_{\Psi_v}=W_{\Psi_v}$, $\,\!^\sigma W_{\Phi_v}=W_{\Phi_v}$ and $\sigma \circ \varphi_v=\varphi_v$ for any $\sigma \in \mrm{Aut}(\C/E)$. Then 
$$
\left. \frac{\mathcal{Z}_v(s, W_{\Psi_v}, W_{\Phi_v}, f_{\varphi_v})}{L(s, \Pi_v \times \sigma_v)} \right|_{s=1} \in E.
$$
Furthermore, there exist $W_{\Psi_v}$, $W_{\Phi_v}$ and $\varphi_v$ satisfying the condition above and such that
$$
\left. \frac{\mathcal{Z}_v(s, W_{\Psi_v}, W_{\Phi_v}, f_{\varphi_v})}{L(s, \Pi_v \times \sigma_v)} \right|_{s=1} \in E^\times
$$
and 
$$
\int_{\Q_v^2} \varphi_v(x, y) \,dx dy \in E^\times.
$$
\end{pro}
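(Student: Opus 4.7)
The approach combines Soudry's local meromorphy result (Prop.~\ref{ram-int-soudry}) with a Galois equivariance computation. Since Prop.~\ref{ram-int-soudry} guarantees that $\mathcal{Z}_v(s, W_{\Psi_v}, W_{\Phi_v}, f_{\varphi_v})/L(s, \Pi_v \times \sigma_v)$ is a Laurent polynomial in $p^{-s}$, its value at $s=1$ is well-defined. The first assertion reduces to proving, for any $\sigma \in \mrm{Aut}(\C/E)$ and any triple $(W_{\Psi_v}, W_{\Phi_v}, \varphi_v)$, the equivariance
$$
\sigma\!\left(\frac{\mathcal{Z}_v(s, W_{\Psi_v}, W_{\Phi_v}, f_{\varphi_v})}{L(s, \Pi_v \times \sigma_v)}\right)\bigg|_{s=1} = \frac{\mathcal{Z}_v(s, \,\!^\sigma W_{\Psi_v}, \,\!^\sigma W_{\Phi_v}, f_{\sigma\circ \varphi_v})}{L(s, \,\!^\sigma \Pi_v \times \,\!^\sigma \sigma_v)}\bigg|_{s=1};
$$
applied to Galois-fixed inputs, and using that $\,\!^\sigma \Pi_v \simeq \Pi_v$ and $\,\!^\sigma \sigma_v \simeq \sigma_v$ since both representations are defined over $E$, this forces the value to be $\sigma$-fixed, hence in $E$.

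The heart of the argument is the $\sigma$-equivariance of $\mathcal{Z}_v$ itself; the $L$-factor is manifestly $\sigma$-rational because the Langlands parameters of $\Pi_v$ and $\sigma_v$ defined over $E$ are $\sigma$-stable. The key observation is that the twisting matrix $T_{\sigma,p} = \mrm{diag}(t_{\sigma,p}^{-3}, t_{\sigma,p}^{-2}, 1, t_{\sigma,p}^{-1})$ governing the $\mrm{GSp}(4)$-side Galois action lies in the image of $\iota$: the determinants $t_{\sigma,p}^{-3}\cdot 1$ and $t_{\sigma,p}^{-2}\cdot t_{\sigma,p}^{-1}$ agree, so $T_{\sigma,p} = \iota\bigl(\mrm{diag}(t_{\sigma,p}^{-3}, 1), \mrm{diag}(t_{\sigma,p}^{-2}, t_{\sigma,p}^{-1})\bigr)_1$. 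Performing the associated change of variable $h \mapsto h_\sigma h$ in the integral (legitimate by unimodularity of $H$ and $|\det T_{\sigma,p}|_p = 1$), applying $\sigma$ to the integrand, and invoking that $\varphi_v$ is $E$-valued and locally constant with compact support so that $\sigma$ commutes with the integration defining $f_{\varphi_v}$, restores the original integrand up to a scalar absorbed by the central character of $\sigma_v$ (the twisting $T_{\sigma,p}$ and its $\mrm{GL}(2)$-analogue $T'_{\sigma,p}$ agree under $\iota$ only modulo the center of $\mrm{GL}(2)$).

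For the second assertion, Prop.~\ref{ram-int-soudry} implies that the trilinear form
$$
T_1\colon (W_{\Psi_v}, W_{\Phi_v}, \varphi_v) \mapsto \left.\frac{\mathcal{Z}_v(s, W_{\Psi_v}, W_{\Phi_v}, f_{\varphi_v})}{L(s, \Pi_v \times \sigma_v)}\right|_{s=1}
$$
is non-zero, since the set of ratios (parametrized by $s$) generates the whole polynomial ring $\C[p^{-s}, p^{s}]$ whose evaluation at $s=1$ is surjective onto $\C$. By the equivariance already established, $T_1$ is $E$-linear on $E$-rational (Galois-fixed) triples, and since those span the complex representations, $T_1$ remains non-zero on them. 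This yields an $E$-rational triple $(W_{\Psi_v}, W_{\Phi_v}, \varphi_v)$ with the ratio in $E^\times$. To arrange simultaneously $\int_{\Q_v^2} \varphi_v(x,y)\,dx\,dy \in E^\times$, we use that both $\int\!\cdot\,dx\,dy$ and $T_1(W_{\Psi_v}, W_{\Phi_v}, -)$ are non-zero $E$-linear functionals on the space of $E$-rational Schwartz-Bruhat functions, and a generic $E$-rational combination of the original $\varphi_v$ with any Schwartz function supported on a disjoint compact open set of $E$-rational volume satisfies both non-vanishing constraints.

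The main obstacle is the Galois equivariance of the local zeta integral: the mismatch between $T_{\sigma,p}$ and $T'_{\sigma,p}$ under $\iota$ requires one to track the central character of $\sigma_v$ carefully, and one must verify that the Haar measure on $H$, the additive character $\psi_v$ (via the relation $\psi_v(t_{\sigma,p}^{-1}x) = \sigma^{-1}(\psi_v(x))$), and the Jacquet section $f_{\varphi_v}$ all transform compatibly. Once this equivariance is in place, the other steps---the $\sigma$-rationality of $L$-factors, the production of $E$-rational non-vanishing triples from the density of $E$-rational vectors, and the final adjustment making $\int \varphi_v \in E^\times$---are essentially formal.
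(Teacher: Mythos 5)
Your argument matches the paper's proof almost step for step: you expand the local integral (implicitly via Soudry's Laurent-series form), observe that the diagonal twist $T_{\sigma,p}=\mrm{diag}(t_{\sigma,p}^{-3},t_{\sigma,p}^{-2},1,t_{\sigma,p}^{-1})$ lies in $\iota(H(\Q_p))$ because $t_{\sigma,p}^{-3}\cdot 1=t_{\sigma,p}^{-2}\cdot t_{\sigma,p}^{-1}$, perform the corresponding change of variable (using $|t_{\sigma,p}|_p=1$ so the domain $|\det h_1|=p^j$ is preserved and $(0,1)\mrm{diag}(t_{\sigma,p}^{-3},1)=(0,1)$ so $\varphi_v$ is unchanged), reconcile the $\mrm{GL}(2)$-component $\mrm{diag}(t_{\sigma,p}^{-2},t_{\sigma,p}^{-1})$ with $T'_{\sigma,p}$ via the (trivial) central character, and conclude both non-vanishing claims from $E$-structures plus the elementary fact that two nonzero $E$-linear functionals cannot have kernels covering the whole space. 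This is precisely the proof given in the paper; the only small difference is that the paper applies $\sigma$ to each of the (finitely supported) coefficients $A_j$ of Soudry's expansion $\sum_j A_j p^{js}$ rather than speaking of applying $\sigma$ inside the integral itself, which is the cleaner way to handle the discontinuity of $\sigma\in\mrm{Aut}(\C)$.
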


\begin{proof} Following \cite{soudry} p. 380, we write for $\re(s)$ big enough
$$
\mathcal{Z}_v(s, W_{\Psi_v}, W_{\Phi_v}, f_{\varphi_v})=\sum_{j=-\infty}^{+\infty} A_j(W_{\Psi_v}, W_{\Phi_v}, \varphi_v)p^{js}
$$
where 
$$
 A_j(W_{\Psi_v}, W_{\Phi_v}, \varphi_v)=\int_{(h_1, h_2) \in N_H(\Q_p) \backslash H(\Q_p), |\det h_1|=p^j}W_{\Psi_v}(h_1,h_2) W_{\Phi_v}(h_2) \varphi_v((0,1)h_1) dh.
$$
This integral is absolutely convergent and reduces to a finite sum because the integrated function has compact support modulo $N_H(\Q_p)$ in the set $\{ (h_1, h_2) \in H(\Q_p), |\det h_1|=p^j \}$ (\cite{casselman-shalika} Prop. 6.1) and is invariant by right translation by a sufficiently small open subgroup. Furthermore it vanishes for $j$ big enough. Let $\sigma \in \mrm{Aut}(\C)$. Fixing $j$ and computing integrals over $\{(h_1, h_2) \in N_H(\Q_p) \backslash H(\Q_p), |\det h_1|=p^j\}$, we have
$$
 A_j(\,\!^\sigma W_{\Psi_v}, \,\!^\sigma W_{\Phi_v}, \sigma \circ \varphi_v)
$$
\begin{eqnarray*}
&=& \int \sigma W_{\Psi_v} \left(\begin{pmatrix}
t_{\sigma}^{-3} & &  & \\
 &  t_{\sigma}^{-2} &  & \\
 &  & 1 & \\
 & &  & t_{\sigma}^{-1}\\
\end{pmatrix} (h_1,h_2) \right) \sigma W_{\Phi_v}\left(\begin{pmatrix}
t_{\sigma}^{-1} &  \\
 &  1\\
\end{pmatrix}  h_2 \right) \sigma \varphi_v((0,1)h_1) dh\\
&=& \int \sigma W_{\Psi_v} \left(\begin{pmatrix}
t_{\sigma}^{-3} & &  & \\
 &  t_{\sigma}^{-2} &  & \\
 &  & 1 & \\
 & &  & t_{\sigma}^{-1}\\
\end{pmatrix} (h_1,h_2) \right) \sigma W_{\Phi_v}\left(\begin{pmatrix}
t_{\sigma}^{-2} &  \\
 &  t_{\sigma}^{-1}\\
\end{pmatrix}  h_2 \right) \sigma \varphi_v((0,1)h_1) dh\\
&=& \sigma A_j(W_{\Psi_v}, W_{\Phi_v}, \varphi_v)
\end{eqnarray*}
where the first equality follows from the definition of $\,\!^\sigma W_{\Psi_v}$ and  $\,\!^\sigma W_{\Phi_v}$, the second from the fact that $W_{\Phi_v}$ has trivial central character, and the last from an easy change of variable. In particular, under the assumptions of the Proposition, the function $\mathcal{Z}_v(s, W_{\Psi_v}, W_{\Phi_v}, f_{\varphi_v})$ is an element of $E((p^{-s}))$ and it follows from \cite{soudry} Lem. 3.2 that this function extends to an element of $E(p^{-s})$. This implies the first statement. The non-vanishing of
$$
\left. \frac{\mathcal{Z}_v(s, W_{\Psi_v}, W_{\Phi_v}, f_{\varphi_v})}{L(s, \Pi_v \times \sigma_v)} \right|_{s=1}
$$
for well chosen functions $W_{\Psi_v}$, $W_{\Phi_v}$ and $\varphi_v$ satisfying the assumptions of the Proposition is a direct consequence of Prop. \ref{ram-int-soudry} and of the fact that functions satisfying $\,\!^\sigma W_{\Psi_v}=W_{\Psi_v}$, $\,\!^\sigma W_{\Phi_v}=W_{\Phi_v}$ and $\sigma \circ \varphi_v$ for any $\sigma \in \mrm{Aut}(\C/E)$ define $E$-structures in the corresponding $\C$-vector spaces. The existence of $W_{\Psi_v}$, $W_{\Phi_v}$ and $\varphi_v$ such that in addition one has 
$$
\int_{\Q_v^2} \varphi_v(x, y) \,dx dy \in E^\times
$$
is a consequence of the following elementary result: let $k$ be a field and let $\mathcal{V}$ be a $k$-vector space with two non-zero linear functionals $l_1, l_2: \mathcal{V} \rightarrow k$; then $\ker l_1 \cup \ker l_2 \neq \mathcal{V}$.
\end{proof}

\section{Proof of the main result} \label{pfs}

Let $p(\Pi)$, resp. $p(\sigma)$, be the de Rham-Whittaker periods attached to $\Pi$, resp. $\sigma$, defined and normalized in section \ref{dR-W}. Let $p(\Pi \times \sigma)$ denote the product $p(\Pi)p(\sigma)$.

\begin{thm} Assume that $L(s, \Pi \times \sigma)$ has a pole at $s=1$ and that $\Pi$ and $\sigma$ have trivial central characters. Then
$$
\mathcal{Z}(\Pi_f \times \sigma_f)=p(\Pi \times \sigma)\Res_{s=1}L(s, \Pi \times \sigma) \mathcal{D}(\Pi_f \times \sigma_f).
$$
\end{thm}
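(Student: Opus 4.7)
The plan is to begin from Corollary \ref{un-corollaire}, which already states
$$
\mathcal{Z}(\Pi_f \times \sigma_f) = (2\pi i)^8 \langle v_\mathcal{Z}, \overline{\omega} \otimes \eta \rangle_{B, \C}\, \mathcal{D}(\Pi_f \times \sigma_f),
$$
so the task reduces to identifying $(2\pi i)^8 \langle v_\mathcal{Z}, \overline{\omega} \otimes \eta \rangle_{B, \C}$ with $p(\Pi)\, p(\sigma)\, \Res_{s=1} L(s, \Pi \times \sigma)$ modulo $E^\times$. The first step is to fix suitably normalized test vectors: a cusp form $\Psi = \Psi_\infty \otimes \Psi_f \in \Pi$ with $\Psi_\infty$ as in (\ref{normalization}) and $\Psi_f$ rational in the Whittaker model, a cusp form $\Phi = \Phi_\infty \otimes \Phi_f \in \sigma$ with $\Phi_\infty$ normalized as in Prop. \ref{archi-comp}, the Gaussian $\varphi_\infty$ of loc. cit., and at each $v \in V_f$ data $(\Psi_v, \Phi_v, \varphi_v)$ satisfying the $E$-rationality and non-vanishing conditions of Prop. \ref{ram-int}. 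By Prop. \ref{dR-W-periods} and its $\mrm{GL}(2)$-analogue, the generators $\omega$ of $F^2 M_{dR}(\Pi_f)$ and $\eta$ of $F^1 M_{dR}(\sigma_f)$ may then be identified, up to $E^\times$, with $p(\Pi)^{-1} \omega_\Psi$ and $p(\sigma)^{-1} \eta_\Phi$ respectively.

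The second step is to evaluate the pairing. Prop. \ref{coho-period} gives
$$
\langle v_\mathcal{Z}, \overline{\omega}_\Psi \otimes \eta_\Phi \rangle_{B, \C} \sim \int_{Z(\A) H(\Q) \backslash H(\A)} (X_{(-1,1)} \overline{\Psi})(h_1, h_2) \Phi(h_2)\, dh,
$$
and Prop. \ref{residue}, applied to $X_{(-1,1)}\overline{\Psi}$, identifies this integral, up to the $E^\times$-constant $\widehat{\varphi}(0)/2$, with $\Res_{s=1} \mathcal{Z}(s, X_{(-1,1)}\overline{\Psi}, \Phi, f_\varphi)$. The Eulerian factorisation at $s=1$ reads
$$
\Res_{s=1} \mathcal{Z}(s, X_{(-1,1)}\overline{\Psi}, \Phi, f_\varphi) = \mathcal{Z}_\infty(1) \prod_{v \in V_f} \mathcal{Z}_v(1, W_{\Psi_v}, W_{\Phi_v}, f_{\varphi_v}) \cdot \Res_{s=1} L_V(s, \Pi \times \sigma).
$$
Prop. \ref{archi-comp} gives $\mathcal{Z}_\infty(1) = \Gamma_{\C}(3)\Gamma_{\C}(2)^2\Gamma_{\C}(1) \in \pi^{-8} \Q^\times$, and Prop. \ref{ram-int} yields $\mathcal{Z}_v(1) \sim L_v(1, \Pi_v \times \sigma_v)$ for each $v \in V_f$. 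Multiplied against $\Res_{s=1} L_V$, the ramified local $L$-factors recombine into $\Res_{s=1} L(s, \Pi \times \sigma)$, while the transcendental parts cancel because $(2\pi i)^8 \in \pi^8 \Q^\times$. Reinserting the periods from the first step produces the desired identity.

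The main obstacle is the careful bookkeeping of the de Rham--Whittaker periods through the bar operation that defines $\overline{\omega}$ in the proof of Cor. \ref{un-corollaire}. Passing from $\omega$ to $\overline{\omega}$ swaps the Hodge types $(2,1)$ and $(1,2)$ in $M(\Pi_f)$ and thus pairs $p(\Pi)^{-1} \omega_\Psi$ with the conjugate class $\overline{\omega}_\Psi$ associated to $\overline{\Psi}_\infty$; one must check that this conversion, combined with the Poincar\'e duality pairing between the $(2,1)$ and $(1,2)$ parts and with the analogous operation on $\eta$, transforms the two inverse periods into the direct product $p(\Pi \times \sigma) = p(\Pi)p(\sigma)$ appearing in the statement. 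Once these compatibilities are established, all $E^\times$-ambiguities line up and the theorem follows.
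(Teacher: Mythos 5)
Your proposal follows the same overall strategy as the paper's own proof: begin with Corollary \ref{un-corollaire}, identify the $E$-rational generators $\omega$ of $F^2 M_{dR}(\Pi_f)$ and $\eta$ of $F^1 M_{dR}(\sigma_f)$ with period-normalized multiples of the Whittaker-normalized classes $\omega_\Psi$ and $\eta_\Phi$ via Proposition \ref{dR-W-periods}, feed the pairing $\langle v_\mathcal{Z}, \overline{\omega}_\Psi \otimes \eta_\Phi\rangle_{B,\C}$ through Proposition \ref{coho-period} and Proposition \ref{residue} to reach the residue of the global zeta integral, and then factorize it using Propositions \ref{archi-comp} and \ref{ram-int}. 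The local non-archimedean ratios and $\widehat{\varphi}(0)$ are controlled exactly as in Proposition \ref{ram-int}, and the archimedean $\Gamma$-factors together with the $(2\pi i)^8$ cancel up to $E^\times$. This is the paper's argument.

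There is, however, one point that you leave hanging and that the paper settles in a single clean step. You identify $\omega$ with $p(\Pi)^{-1}\omega_\Psi$ and $\eta$ with $p(\sigma)^{-1}\eta_\Phi$, and you observe that this produces $p(\Pi\times\sigma)^{-1}$ rather than $p(\Pi\times\sigma)$, hoping that ``careful bookkeeping through the bar operation'' will flip the two inverse periods into the direct product. The paper's proof does not rely on any such mechanism. It simply asserts (invoking Proposition \ref{dR-W-periods}) that $p(\Pi)\omega_\Psi$ is a generator of $F^2 M_{dR}(\Pi_f)$ and $p(\sigma)\eta_\Phi$ is a generator of $F^1 M_{dR}(\sigma_f)$, after which Corollary \ref{un-corollaire} immediately yields the factor $\pi^8 p(\Pi\times\sigma)$ with the correct (positive) exponent; the bar operation in $\overline{\omega}\otimes\eta$ is merely coefficientwise complex conjugation and plays no role in reconciling the sign of the exponent. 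Your paragraph about Hodge-type swapping between $(2,1)$ and $(1,2)$ and Poincar\'e duality ``transforming inverse periods into the direct product'' is a speculative patch for a discrepancy you introduced by choosing the other normalization; it is not how the paper closes the argument, and as written it does not actually establish the claimed sign change. You should resolve the ambiguity in the relation between $\omega_\Psi$, $i^{-1}(W_{\Psi_f})$ and the $E$-structure $H^{2,1}_{dR}(\Pi_f)$ coming from Proposition \ref{dR-W-periods} at the outset, so that the exponent of $p(\Pi)$ (and likewise $p(\sigma)$) is fixed correctly before applying Corollary \ref{un-corollaire}, rather than hoping a later step will repair it.
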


\begin{proof} Let $\Psi=\bigotimes'_v \Psi_v$ be factorizable a cusp form in the representation space of $\Pi$ such that $\Psi_\infty$ is a highest weight vector of the minimal $K_\infty$-type normalized as in \eqref{normalization} and such that for any non-archimedean place $v$, the Whittaker function $W_{\Psi_v}$ is satisfies $\,\!^\sigma W_{\Psi_v}=W_{\Psi_v}$ for any element $\sigma$ of $\mrm{Aut}(\C/E)$. Let $\Phi=\bigotimes'_v \Phi_v$ be a factorizable cusp form in the representation space of $\sigma$ such that $\Phi_\infty$ is a generator of the minimal $L_\infty$-type of $\sigma_\infty$ normalized in such a way that $W_{\Phi_\infty}(I_2)=c_2$ as in Prop. \ref{archi-comp} and such that for any non-archimedean place $v$, the Whittaker function $W_{\Phi_v}$ satisfies $\,\!^\sigma W_{\Phi_v}=W_{\Phi_v}$ for any $\sigma \in \mrm{Aut}(\C/E)$. It follows from Prop. \ref{mult} and the Hodge decomposition (\ref{hodge}) that we have the isomorphism $F^2 M_{dR}(\Pi_f)_{\C} \simeq M^{2, 1}(\Pi_f)$ induced by the comparison isomorphism. Hence, by Prop. \ref{dR-W-periods}, the vector $p(\Pi) \omega_\Psi$ is a generator of $F^2 M_{dR}(\Pi_f)$ and, similarly, the vector $p(\sigma) \eta_\Phi$ is a generator of $F^1 M_{dR}(\sigma_f)$. According to Cor. \ref{un-corollaire}, we have
$$
\mathcal{Z}(\Pi_f \times \sigma_f) = \pi^8 p(\Pi \times \sigma) \langle v_\mathcal{Z}, \overline{\omega}_\Psi \otimes \eta_\Phi \rangle_{B, \C} \mathcal{D}(\Pi_f \times \sigma_f).
$$
Applying to Prop. \ref{coho-period} and Cor. \ref{residue} we have
$$
\mathcal{Z}(\Pi_f \times \sigma_f) = \pi^8 p(\Pi \times \sigma) \widehat{\varphi}(0)^{-1} \Res_{s=1} \mathcal{Z}\left(s, X_{(-1,1)}\Psi, \Phi, f_\varphi \right) \mathcal{D}(\Pi_f \times \sigma_f) 
$$
where in the last equality $\varphi=\prod_v \varphi_v$ is any factorizable Schwartz-Bruhat function on $\A^2$ whose archimedean component is $\varphi_\infty(x,y)=\exp(-\pi(x^2+y^2))$ and whose non-archimedean components at ramified places are given by Prop. \ref{ram-int}. For such a choice of $\varphi$, we have $\widehat{\varphi}(0) \in E^\times$. Hence the statement follows from the combination of Prop. \ref{archi-comp} and Prop. \ref{ram-int}.
\end{proof}

\end{document}